\documentclass[12pt]{article}

\usepackage{amsmath,amssymb,amsthm,amsfonts,bbm,bm,mathtools,tikz}

\usetikzlibrary{calc,angles,quotes}

\usetikzlibrary{external}
\ifdefined\shellescape
\fi

\newtheorem{superclass}{Superclass}
\newtheorem{definition}[superclass]{Definition}
\newtheorem{lemma}[superclass]{Lemma}
\newtheorem{proposition}[superclass]{Proposition}
\newtheorem{theorem}[superclass]{Theorem}
\newtheorem{corollary}[superclass]{Corollary}
\newtheorem{example}[superclass]{Example}

\newcommand{\R}{\mathbb{R}}
\newcommand{\N}{\mathbb{N}}
\newcommand{\mc}{\mathcal}
\newcommand{\eps}{\epsilon}

\DeclareMathOperator{\argmin}{argmin}
\DeclareMathOperator{\argmax}{argmax}
\DeclareMathOperator{\dist}{dist}
\DeclareMathOperator{\co}{conv}
\DeclareMathOperator{\cone}{cone}
\DeclareMathOperator{\po}{poly}
\DeclareMathOperator{\interior}{int}

\DeclareMathOperator{\ext}{ext}

\begin{document}

\title{Galerkin approximations to the space of convex bodies by polytopes in nondegenerate
V-representation}
\author{Janosch Rieger}
\date{\today}
\maketitle

\begin{abstract}
We introduce finite-dimensional approximations to the space of convex bodies based on
polytopes in vertex representation.
For a prescribed set of directions, the admissible point configurations form a
polyhedral convex subcone of the Euclidean vector space, described by a finite
system of linear inequalities.
The interior of this cone contains only nondegenerate representations, in which all
parameter points are distinct vertices of the represented polytope.
We study the geometry of the parameter cone, redundancies in its defining inequalities,
and natural projections of convex bodies onto the resulting polytope spaces.
We derive quantitative approximation estimates in terms of how densely the prescribed
directions cover the unit sphere and construct nested Galerkin sequences whose
approximation error converges locally uniformly to zero.
Finally, we use these approximation properties to establish the convergence of
finite-dimensional approximations of constrained global optimization problems
in the space of convex bodies.
\end{abstract}

\noindent\textbf{Keywords:}
Convex bodies,
polytope approximation,
vertex representation,
nondegenerate representations,
Galerkin approximation,
shape optimization

\medskip

\noindent\textbf{Mathematics Subject Classification:}
65K10, 52A27, 52B11, 49Q10

\section{Introduction}

Optimization problems in the space of convex bodies have a long history.
Mahler's conjecture \cite{Mahler},
suggesting that the minimum of the product of the volumes of a
centrally symmetric convex body and its polar is attained at the cube, has been proved
in \cite{Mahler:2} and \cite{Iriyeh} for dimensions $2$ and $3$,
but is still open for all dimensions $d>3$.
Another long-standing open problem is to determine convex bodies of constant width
with minimal volume, where the Meissner bodies \cite{Kawohl} are conjectured
to be optimal for dimension $d=3$.
So far, only results for certain subfamilies of the convex bodies are known,
e.g.\ for the family of all bodies of revolution of constant width \cite{Anciaux}
and the family of all Meissner pyramids \cite{Bogosel:Meissner}.

\medskip

More recently, optimization problems in the space of convex bodies
have been approached from a computational perspective, partly with the
intention to understand problems like the above-mentioned ones better.

Approaches based on spectral decompositions of the support function
currently focus on dimensions $d=2,3$.
In \cite{Ftouhi}, for $d=2$, support and gauge functions are expressed in terms of
truncated Fourier series with convexity constraints imposed on the coefficients.
In \cite{Antunes} the same approach is pursued and extended to $d=3$.
The paper mentions that checking the convexity constraints at grid points
is not strictly sufficient to guarantee convexity, i.e.\
there is a chance that the represented function is not really a support function
of a convex body.
This problem had been solved before in \cite{Bayen}, but at the price of converting
the optimization problems to semidefinite programs.
In \cite{Bogosel} and \cite{Bogosel:24} the convexity constraint is made rigorous
by working with a finite difference-based approach.
It is not explicitly stated, but the proof of \cite[Theorem 2.6]{Bogosel} reveals
that the proposed spaces of convex bodies approximate the space of all convex bodies
in $\R^2$ locally uniformly.

Some work is based on the approximation by polytopes in the so-called $\mc{H}$-representation,
i.e.\ polytopes given as intersections of half-spaces.
It was already observed in \cite{Lachland} that this approach
\emph{yields a technical difficulty that should not be
underestimated: some of the boundary planes are \emph{dormant}, meaning the polytope
is actually included in the interior of the half-space.}
Therefore, in \cite{Diaz} the $\mc{H}$-representation is used locally, i.e.\ in a neighborhood
of a given parameter vector, where the polytopes do not change their combinatorial
structure.
To resolve the issue, the author developed in \cite{Rieger:Galerkin} a framework
that yields a convex polyhedral subcone of $\R^N$ containing all nondegenerate
representations of polytopes with $N$ prescribed facet normals in $\R^d$
for arbitrary $d$.
These spaces can be chosen in such a way that they approximate the space of convex
bodies in $\R^d$ locally uniformly to arbitrary precision.
This approach has been applied in \cite{Ernst} for approximating the infinite time
reachable set of strictly stable linear control systems, which is a convex body in $\R^d$
that is difficult to characterize.

Naturally, some work is based on the approximation of convex bodies by polytopes
in $\mc{V}$-representation, i.e.\ by polytopes that are given in terms of their vertices.
The issue here is that $\mc{V}$-representations degenerate when points
that are meant to be vertices of the parameterized polytope lie in the convex hull
of the remaining points.
In \cite{Bartels:2} and \cite{Bartels} this issue is circumvented by meshing
the body and keeping the mesh structure fixed, which resembles the approach
by \cite{Diaz} in the dual setting.
Once the facets are known and fixed, it suffices to ensure that all pairs
of neighboring facets share a dihedral angle no larger than $\pi$.

The above overview is not exhaustive, and the attempt to systematically approximate
convex objects is not limited to convex bodies.
We mention that e.g.\ in \cite{Ekeland} convex approximations to convex functions
are constructed by considering function values and gradients on grid points
as optimization variables and enforcing the standard convexity criterion
for functions in terms of these function values and gradients.

\medskip

The purpose of this paper is to propose a solution to the issue of degeneracy
of approximations in $\mc{V}$-representation that is not limited to a particular dimension
or a fixed combinatorial structure.
We essentially consider the space of all polytopes in $\R^d$ with prescribed vertex normals.
These polytopes are precisely the convex hulls of point configurations that satisfy a system
of linear inequalities, i.e.\ the set of all admissible point configurations forms
a polyhedral convex cone.
In Section \ref{sec:background} we introduce notation, recall our work from
\cite{Rieger:Galerkin} and explain how the spaces introduced in this paper
complement the use of those from \cite{Rieger:Galerkin}.
In Section \ref{sec:the:cone} we explore the geometry of the cone of admissible parameters
and the geometry of the parameterized polytopes.
In Section \ref{sec:interior} we prove that the interior of the parameter cone
contains only point configurations such that all points are distinct vertices of the
parameterized polytopes.
In Section \ref{sec:redundancy} we briefly discuss redundancies in the linear
inequality constraints defining the parameter cones.
In Section \ref{sec:projectors} we explore properties of natural projectors from
the space of convex bodies to the parameter cone and to the space of parameterized
polytopes.
In Section \ref{sec:approximation} we prove that our polytope spaces approximate
the space of convex bodies locally uniformly, and that the locally uniform
approximation error tends to zero when the chordal covering radius of the
vertex normals in the sphere converges to zero.
Finally, in Section \ref{sec:optimization}, we lay out how these spaces
are used to approach global optimization problems in the space of convex bodies.

\section{Background and definitions}
\label{sec:background}

Let $d\ge 2$.
By $\mc{C}^d$, $\mc{K}^d$ and $\mc{CC}^d$ we denote the spaces of
all nonempty and compact,
all nonempty, convex and compact,
and all nonempty, closed and convex subsets of $\R^d$.
The Hausdorff semi-distance and Hausdorff distance are, respectively,
\begin{align*}
&\dist:\mc{C}^d\times\mc{C}^d\to\R_+,&&\dist(K,L):=\sup_{x\in K}\inf_{y\in L}\|x-y\|,\\
&\dist_H:\mc{C}^d\times\mc{C}^d\to\R_+,&&\dist_H(K,L):=\max\{\dist(K,L),\dist(L,K)\}.
\end{align*}
Similarly, we introduce the size
\[\|\,\cdot\,\|:\mc{C}^d\to\R_+,\quad\|K\|:=\max_{x\in K}\|x\|.\]
We denote the set of all extremal points of $K\in\mc{K}^d$ by $\ext(K)$
and the convex hull of $K\in\mc{C}^d$ by $\co(K)$.
We often drop the brackets for aesthetic reasons.
Also, when a set is a singleton or a collection consists of a single set,
we may state the singleton instead of the set or the set instead of the collection.
The normal cone to a set $K\in\mc{K}^d$ at $x\in K$ is the set
\[\mc{N}_K(x):=\{z\in\R^d:z^T(y-x)\le0\ \text{for all}\ y\in K\}.\]

Throughout the first sections of this paper we fix $N\in\N$ with $N>1$
and pairwise distinct vectors
$a_1,\ldots,a_N\in\R^d$ with $\|a_i\|=1$ for all $i\in\{1,\ldots,N\}$
and refer to them in the compact form $\bm{a}\in\R^{d\times N}$.

\subsection{Overapproximation in $\mc{H}$-representation}

Given $\bm{a}\in\R^{d\times N}$, the paper \cite{Rieger:Galerkin} considers the space
\[\mc{P}^\sharp_{\bm{a}}:=\{\po_{\bm{a}}(b):b\in\R^N\}\setminus\{\emptyset\},\quad
\po_{\bm{a}}(b):=\{x\in\R^d:\bm{a}^Tx\le b\},\]
of all polyhedra in $\R^d$ the facet normals of which are contained in $\{a_1,\ldots,a_N\}$.
If the chordal covering radius of $\bm{a}$ in the sphere is not too large,
all these polyhedra are polytopes.
The space $\mc{P}^\sharp_{\bm{a}}$ is parameterized by the set
\[\mc{C}^\sharp_{\bm{a}}
:=\{b\in\R^N:\forall\,i\in\{1,\ldots,N\}\
\exists x_i\in\po_{\bm{a}}(b)\ \text{with}\ a_i^Tx_i = b_i\}\]
of nondegenerate parameterizations and Sections 2.4-2.6 establish
that $\mc{C}^\sharp_{\bm{a}}$ is a polyhedral convex cone
with an $\mc{H}$-representation that can be derived from $\bm{a}$.
While the representation of $\po_{\bm{a}}(b)$ in terms of $b$ is sparse, the system of
linear inequalities that define $\mc{C}^\sharp_{\bm{a}}$ is large and somewhat
difficult to work with.
The space is well-suited for projects like \cite{Ernst}
that involve overapproximation, ordering by set
inclusion, which is preserved by the projectors
\begin{align}
&\pi^\sharp_{\bm{a}}:\mc{K}^d\rightarrow\mc{C}^\sharp_{\bm{a}},
&&[\pi^\sharp_{\bm{a}}(K)]_i:=\max_{x\in K}a_i^Tx\quad\text{for all}\ i,\label{sharp:1}\\
&\Pi^\sharp_{\bm{a}}:\mc{K}^d\rightarrow\mc{P}^\sharp_{\bm{a}},
&&\Pi^\sharp_{\bm{a}}(K):=\po_{\bm{a}}(\pi^\sharp_{\bm{a}}(K)),\label{sharp:2}
\end{align}
or problems that involve the width of convex bodies.
In addition, the space has good approximation properties, which we restate from
\cite[Theorem 38]{Rieger:Galerkin}.

\begin{theorem} \label{oldapprox}
Let the chordal covering radius
\[\delta_{\bm{a}}:=\max_{u\in S^{d-1}}\min_{i\in\{1,\ldots,N\}}\|u-a_i\|\]
of $\bm{a}$ satisfy $\delta_{\bm{a}}\in(0,1)$.
Then for every $K\in\mc{K}^d$, we have $K\subset\Pi^\sharp_{\bm{a}}(K)$ and
\[\dist(\Pi^\sharp_{\bm{a}}(K),K)
\le\frac{2-\delta_{\bm{a}}}{1-\delta_{\bm{a}}}\delta_{\bm{a}}\|K\|.\]
\end{theorem}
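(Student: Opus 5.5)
The inclusion $K\subset\Pi^\sharp_{\bm{a}}(K)$ is immediate from the definitions. For $x\in K$ and every $i$ we have $a_i^Tx\le\max_{y\in K}a_i^Ty=[\pi^\sharp_{\bm{a}}(K)]_i$, so $x\in\po_{\bm{a}}(\pi^\sharp_{\bm{a}}(K))$. The substance is the estimate on $\dist(\Pi^\sharp_{\bm{a}}(K),K)$. My plan is to take an arbitrary $p\in\Pi^\sharp_{\bm{a}}(K)$ with $p\notin K$, bound $\|p-q\|$ where $q$ is the metric projection of $p$ onto $K$, and then take the supremum over $p$. Along the way this also shows that $\Pi^\sharp_{\bm{a}}(K)$ is bounded, hence a polytope.

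Let $r:=\|p-q\|>0$ and $u:=(p-q)/r\in S^{d-1}$. Then $u\in\mc{N}_K(q)$, so $h_K(u)=u^Tq$, where $h_K$ denotes the support function. By the definition of $\delta_{\bm{a}}$ there is an index $i$ with $\|u-a_i\|\le\delta_{\bm{a}}$. Write $a_i=u+e$ with $\|e\|\le\delta_{\bm{a}}$; then $u^Ta_i=1-\tfrac12\|e\|^2\ge 1-\tfrac12\delta_{\bm{a}}^2$.

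The key inequality comes from $p\in\po_{\bm{a}}(\pi^\sharp_{\bm{a}}(K))$, which gives
\[a_i^Tp\le h_K(a_i)\le h_K(u)+h_K(e)\le u^Tq+\delta_{\bm{a}}\|K\|,\]
using sublinearity of $h_K$ and $h_K(e)\le\|e\|\,\|K\|$. On the other hand,
\[a_i^Tp=a_i^Tq+r\,a_i^Tu=u^Tq+e^Tq+r\,a_i^Tu.\]
Combining the two, and using $|e^Tq|\le\delta_{\bm{a}}\|K\|$, yields
\[r\,(1-\tfrac12\delta_{\bm{a}}^2)\le 2\delta_{\bm{a}}\|K\|.\]
This already gives a bound of order $\delta_{\bm{a}}\|K\|$. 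Its constant $2/(1-\delta^2/2)$ is comparable to, but not identical with, $(2-\delta)/(1-\delta)$, so the stated constant needs a different bookkeeping.

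The main obstacle is therefore matching the exact constant $\frac{2-\delta_{\bm{a}}}{1-\delta_{\bm{a}}}$. The shape of that constant suggests the bound $a_i^Tu\ge 1-\delta_{\bm{a}}$ (a cruder Cauchy--Schwarz estimate) together with a numerator term $(2-\delta_{\bm{a}})\delta_{\bm{a}}\|K\|$. To reproduce it I would first try the following variant: estimate $h_K(a_i)-a_i^Tq\le\|a_i-u\|\cdot\|K\|+\ldots$ via the Lipschitz property of the support function, and compare $h_K(a_i)$ with $a_i^Tq$ directly rather than through $u$. If a direct computation still yields a slightly different constant, I would note that the theorem is quoted from \cite[Theorem 38]{Rieger:Galerkin} and simply invoke that reference. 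For $\delta_{\bm{a}}<1$ the constant $(2-\delta)/(1-\delta)$ dominates $2/(1-\delta^2/2)$, since $(2-\delta)(1-\delta^2/2)\ge 2(1-\delta)$ is equivalent to $\delta\bigl(1-\delta+\tfrac{\delta^2}{2}\bigr)\ge0$. Hence my bound $r\le\frac{2}{1-\delta^2/2}\delta_{\bm{a}}\|K\|$ implies the stated one, and the constant issue disappears. Taking the supremum over $p$ completes the proof.
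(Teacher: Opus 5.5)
Your proof is correct, and it gives a slightly stronger result than the statement. The paper does not prove this theorem; it quotes it from \cite[Theorem 38]{Rieger:Galerkin}.

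The natural comparison is with the support-function argument, which mirrors the proof of Theorem~\ref{thm:approximation} and produces exactly the stated constant. Let $P:=\Pi^\sharp_{\bm a}(K)$. For $x\in P$ with $x\neq0$, choose $a_i$ within $\delta_{\bm a}$ of $x/\|x\|$. Cauchy--Schwarz gives $\|x\|\le h_K(a_i)+\delta_{\bm a}\|x\|$, hence the a priori bound $\|P\|\le\|K\|/(1-\delta_{\bm a})$. Then for any $u\in S^{d-1}$ and $a_i$ near $u$, $h_P(u)\le h_K(a_i)+\delta_{\bm a}\|P\|\le h_K(u)+\delta_{\bm a}\|K\|+\delta_{\bm a}\|K\|/(1-\delta_{\bm a})$, and \eqref{bound:distance} gives $\frac{2-\delta_{\bm a}}{1-\delta_{\bm a}}\delta_{\bm a}\|K\|$. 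That route is short and parallels the rest of the paper. However, the step $\|P\|\le\|K\|/(1-\delta_{\bm a})$ is what forces $\delta_{\bm a}<1$ and makes the constant blow up as $\delta_{\bm a}\to1$.

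Your argument differs in two ways:
\begin{itemize}
\item you work pointwise through the nearest point $q$ and the outer normal $u$, so you never need a bound on the size of $P$;
\item you replace the Cauchy--Schwarz estimate by the exact identity $a_i^Tu=1-\frac12\|a_i-u\|^2$.
\end{itemize}
This buys three things. You get the sharper constant $2/(1-\delta_{\bm a}^2/2)\le4$. The argument works verbatim whenever $\delta_{\bm a}<\sqrt2$, which is exactly the condition under which every nonempty $\po_{\bm a}(b)$ is bounded. And boundedness of $P$ comes out as a by-product. If you bound $-e^Tq\le h_K(-e)$ instead of by $\delta_{\bm a}\|K\|$, the numerator becomes $h_K(e)+h_K(-e)\le\delta_{\bm a}\,\mathrm{diam}\,K$, a translation-invariant bound.

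Your comparison $(2-\delta)(1-\delta^2/2)-2(1-\delta)=\delta(1-\delta+\delta^2/2)\ge0$ is correct. In a write-up, drop the exploratory paragraph and the fallback of citing the reference. Just state your bound and then this one-line comparison.
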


However, the approach is not suitable for underapproximation and
applications in which the explicit knowledge of the vertices is required,
e.g.\ for mesh generation like in \cite{Bartels:2} and \cite{Bartels}.

\subsection{Underapproximation in $\mc{V}$-representation}

The following approach pursued in this paper intends to meet the above-mentioned needs.
We denote
\[\bm{x}:=(x_1,\ldots,x_N)\in\R^{d\times N}
\quad\text{and}\quad
\{\bm{x}\}:=\{x_1,\ldots,x_N\}\subset\R^d\]
and consider the spaces
\begin{align*}
&\mc{C}^\flat_{\bm{a}}:=\{\bm{x}\in\R^{d\times N}:
a_i^Tx_j\le a_i^Tx_i\ \text{when}\ i\neq j\},\\
&\mc{P}^\flat_{\bm{a}}:=\{\co\{\bm{x}\}:\bm{x}\in\mc{C}^\flat_{\bm{a}}\}.
\end{align*}
For $\bm{x}\in\mc{C}^\flat_{\bm{a}}$, the set $\co\{\bm{x}\}$ is a polytope in $\mc{V}$-representation,
and hence $\mc{P}^\flat_{\bm{a}}\subset\mc{K}^d$.
Figure \ref{figure:def:spaces} illustrates how $\bm{a}$ determines $\mc{P}^\flat_{\bm{a}}$.
We endow $\mc{P}^\flat_{\bm{a}}$ with $\dist_H$ and $\mc{C}^\flat_{\bm{a}}$ with the distance
\[\dist_{\mc{C}^\flat_{\bm{a}}}(\bm{x},\bm{y})
:=\max_{i\in\{1,\ldots,N\}}\|x_i-y_i\|.\]

\begin{figure}
\centering
\begin{tabular}{lcr}

\begin{tikzpicture}[scale=0.66]
\useasboundingbox (-2,-2.5) rectangle (3,2.5);
\coordinate (L) at (1.5,0);
\foreach \i/\ang in {R1/-60,R2/-30,R3/0,R4/30,R5/60}
  {\coordinate (\i) at ({-1.5*cos(\ang)},{1.2*sin(\ang)});}
\fill[blue!20] (L) -- (R5) -- (R4) -- (R3) -- (R2) -- (R1) -- cycle;
\draw[thick] (L) -- (R5) -- (R4) -- (R3) -- (R2) -- (R1) -- (L);

\fill (L) circle (2pt) node[above right] {$x_1$};
\fill (R5) circle (2pt) node[right] {$x_2$};
\fill (R4) circle (2pt) node[left] {$x_3$};
\fill (R3) circle (2pt) node[below left] {$x_4$};
\fill (R2) circle (2pt) node[below] {$x_5$};
\fill (R1) circle (2pt) node[right] {$x_6$};

\draw[->,thick] (L) -- ($ (L) + (1,0) $) node[right] {$a_1$};
\draw[->,thick] (R1) -- ($ (R1) + (0,-1) $) node[below] {$a_6$};
\draw[->,thick] (R2) -- ($ (R2) + (-0.707,-0.707) $) node[below left] {$a_5$};
\draw[->,thick] (R3) -- ($ (R3) + (-1,0) $) node[left] {$a_4$};
\draw[->,thick] (R4) -- ($ (R4) + (-0.707,0.707) $) node[above left] {$a_3$};
\draw[->,thick] (R5) -- ($ (R5) + (0,1) $) node[above] {$a_2$};
\end{tikzpicture}

&

\begin{tikzpicture}[scale=0.66]
\useasboundingbox (-2.5,-2.5) rectangle (2.5,2.5);
\fill (0,0) circle (2pt);
\draw[->,thick] (0,0) -- (1,0) node[right] {$a_1$};
\draw[->,thick] (0,0) -- (0,1) node[above] {$a_2$};
\draw[->,thick] (0,0) -- (-0.707, 0.707) node[above left] {$a_3$};
\draw[->,thick] (0,0) -- (-1,0) node[left] {$a_4$};
\draw[->,thick] (0,0) -- (-0.707,-0.707) node[below left] {$a_5$};
\draw[->,thick] (0,0) -- (0,-1) node[below] {$a_6$};
\end{tikzpicture}

&

\begin{tikzpicture}[scale=0.66]
\useasboundingbox (-3,-2.5) rectangle (2,2.5);
\coordinate (L) at (-1.5,0);
\foreach \i/\ang in {R1/-60,R2/-30,R3/0,R4/30,R5/60}
  {\coordinate(\i)at({1.5*cos(\ang)},{1.2*sin(\ang)});}
\fill[blue!20] (L) -- (R5) -- (R4) -- (R3) -- (R2) -- (R1) -- cycle;
\draw[thick](L)--(R1)--(R2)--(R3)--(R4)--(R5)--(L);
\fill (L) circle (2pt);
\foreach\v in{R1,R2,R3,R4,R5}{\fill (\v) circle (2pt);}
\draw[->,thick] (L) -- ($ (L) + (-1,0) $) node[left] {$a_4$};
\draw[->,thick] (R1) -- ($ (R1) + (0,-1) $) node[below] {$a_6$};
\draw[->,thick] (R3) -- ($ (R3) + (1,0) $) node[right] {$a_1$};
\draw[->,thick] (R5) -- ($ (R5) + (0,1) $) node[above] {$a_2$};
\draw[->,thick] (L) -- ($ (L) + (-0.707,0.707) $) node[above left] {$a_3$};
\draw[->,thick] (L) -- ($ (L) + (-0.707,-0.707) $) node[below left] {$a_5$};
\end{tikzpicture}

\end{tabular}
\caption{The polytope on the left is an element of $\mc{P}^\flat_{\bm{a}}$ for $\bm{a}$
shown in the middle, but the polytope on the right is not.\label{figure:def:spaces}}
\end{figure}

We define multivalued projectors
$\pi^\flat_{\bm{a}}:\mc{K}^d\rightrightarrows\mc{C}^\flat_{\bm{a}}$
and $\Pi^\flat_{\bm{a}}:\mc{K}^d\rightrightarrows\mc{P}^\flat_{\bm{a}}$ by
\begin{align}
&\pi^\flat_{\bm{a}}(K):=\{\bm{x}\in\R^{d\times N}:x_i\in\argmax_{x\in K}a_i^Tx\ \text{for all}\ i\},\label{def:pi}\\
&\Pi^\flat_{\bm{a}}(K):=\{\co\{\bm{x}\}:\bm{x}\in\pi^\flat_{\bm{a}}(K)\}.\label{def:Pi}
\end{align}

The projectors $\pi^\flat_{\bm{a}}$ and $\Pi^\flat_{\bm{a}}$ are indeed multivalued.

\begin{example}\label{example:multivalued:projector}
Let $d=2$, let $K:=B_1(0)\cap\{x\in\R^2:x_2\ge0\}$, and let
$a_1=\frac{1}{\sqrt{2}}(1,1)^T$, $a_2=(0,1)^T$, $a_3=\frac{1}{\sqrt{2}}(-1,1)^T$ and $a_4=(0,-1)^T$.
Then
\begin{align*}
&\pi^\flat_{\bm{a}}(K)=\{(\tfrac{1}{\sqrt{2}}(1,1)^T,(0,1)^T,\tfrac{1}{\sqrt{2}}(-1,1)^T,(s,0)^T):s\in[-1,1]\},\\
&\Pi^\flat_{\bm{a}}(K)=\{\co\{\tfrac{1}{\sqrt{2}}(1,1)^T,(0,1)^T,\tfrac{1}{\sqrt{2}}(-1,1)^T,(s,0)^T\}:s\in[-1,1]\}.
\end{align*}
This situation is illustrated in Figure \ref{figure:example:multivalued:projector}.
\end{example}

\begin{figure}
\centering
\begin{tabular}{lcr}

\begin{tikzpicture}[scale=1.3]
    \useasboundingbox (-1.2,-1.2) rectangle (1.2,1.2);
    \coordinate (A1) at ({1/sqrt(2)}, {1/sqrt(2)});
    \coordinate (A2) at (0,1);
    \coordinate (A3) at ({-1/sqrt(2)}, {1/sqrt(2)});
    \coordinate (S) at (-0.5,0);
    \coordinate (O) at (0,0);

    \fill[blue!20] (A1) -- (A2) -- (A3) -- (S) -- cycle;
    \draw[thick] (A1) -- (A2) -- (A3) -- (S) -- cycle;
    \draw[thick] (0,0) -- (180:1) arc (180:0:1) -- cycle;
    \draw[thick,dashed] (-1.35,0) -- (-1,0);
    \draw[thick,dashed] (1,0) -- (1.39,0);
    \draw[thick,dashed] (1,0) arc (0:-180:1);

    \fill (A1) circle (1pt) node[right] {$x_1$};
    \fill (A2) circle (1pt) node[above] {$x_2$};
    \fill (A3) circle (1pt) node[left] {$x_3$};
    \fill (S) circle (1pt) node[below] {$x_4$};
    \fill (O) circle (1pt) node[below] {$0$};
    \node at (1.2,0.35) {$K$};
\end{tikzpicture}

&

\begin{tikzpicture}[scale=1.3]
    \useasboundingbox (-1.2,-1.2) rectangle (1.2,1.2);
    \coordinate (a1) at ({1/sqrt(2)}, {1/sqrt(2)});
    \coordinate (a2) at (0,1);
    \coordinate (a3) at ({-1/sqrt(2)}, {1/sqrt(2)});
    \coordinate (a4) at (0,-1);
    \coordinate (O) at (0,0);

    \fill (O) circle (1pt) node[below right] {$0$};
    \draw[->,thick] (0,0) -- (a1) node[above right] {$a_1$};
    \draw[->,thick] (0,0) -- (a2) node[above] {$a_2$};
    \draw[->,thick] (0,0) -- (a3) node[above left] {$a_3$};
    \draw[->,thick] (0,0) -- (a4) node[below] {$a_4$};
\end{tikzpicture}

&

\begin{tikzpicture}[scale=1.3]
    \useasboundingbox (-1.2,-1.2) rectangle (1.2,1.2);
    \coordinate (A1) at ({1/sqrt(2)}, {1/sqrt(2)});
    \coordinate (A2) at (0,1);
    \coordinate (A3) at ({-1/sqrt(2)}, {1/sqrt(2)});
    \coordinate (S) at (0.5,0);
    \coordinate (O) at (0,0);

    \fill[blue!20] (A1) -- (A2) -- (A3) -- (S) -- cycle;
    \draw[thick] (A1) -- (A2) -- (A3) -- (S) -- cycle;
    \draw[thick] (0,0) -- (180:1) arc (180:0:1) -- cycle;
    \draw[thick,dashed] (-1.35,0) -- (-1,0);
    \draw[thick,dashed] (1,0) -- (1.39,0);
    \draw[thick,dashed] (1,0) arc (0:-180:1);

    \fill (A1) circle (1pt) node[right] {$x_1$};
    \fill (A2) circle (1pt) node[above] {$x_2$};
    \fill (A3) circle (1pt) node[left] {$x_3$};
    \fill (S) circle (1pt) node[below] {$x_4$};
    \fill (O) circle (1pt) node[below] {$0$};
    \node at (1.2,0.35) {$K$};
\end{tikzpicture}

\end{tabular}
\caption{Two elements of $\pi^\flat_{\bm{a}}(K)$ with $s=-1/2$ (left) and $s=1/2$ (right)
for $K$ and $\bm{a}$ (middle) from Example \ref{example:multivalued:projector}.
The shaded polytopes are the corresponding elements of the collection $\Pi^\flat_{\bm{a}}(K)$.
\label{figure:example:multivalued:projector}}
\end{figure}

\section{The cone $\mc{C}^\flat_{\bm{a}}$}
\label{sec:the:cone}

We commence with a geometric interpretation of $\bm{x}\in\mc{C}^\flat_{\bm{a}}$,
which implies that $x_i\notin\interior(\co\bm{x})$ for all $i$.

\begin{lemma}\label{lem:normal:cone:relationship}
Let $\bm{x}\in\R^{d\times N}$.
Then $\bm{x}\in\mc{C}^\flat_{\bm{a}}$ if and only if
$a_i\in\mc{N}_{\co\{\bm{x}\}}(x_i)$ for all $i$.
\end{lemma}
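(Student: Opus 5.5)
The plan is to unfold both sides into linear inequalities and compare them. The right-hand side, $a_i\in\mc{N}_{\co\{\bm{x}\}}(x_i)$, means by definition of the normal cone that
\[a_i^T(y-x_i)\le 0\quad\text{for all}\ y\in\co\{\bm{x}\}.\]
The left-hand side only asks for this inequality at the finitely many points $y=x_j$ with $j\neq i$. So the whole task is to pass between testing against the generators $x_1,\ldots,x_N$ and testing against their convex hull, which is possible because the map $y\mapsto a_i^T(y-x_i)$ is affine.

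For the direction ``$\Leftarrow$'', I assume $a_i\in\mc{N}_{\co\{\bm{x}\}}(x_i)$ for every $i$. Each $x_j$ lies in $\co\{\bm{x}\}$, so I may take $y=x_j$ in the defining inequality. This gives $a_i^Tx_j\le a_i^Tx_i$ for all $i,j$, in particular for $i\neq j$. Hence $\bm{x}\in\mc{C}^\flat_{\bm{a}}$.

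For the direction ``$\Rightarrow$'', I assume $\bm{x}\in\mc{C}^\flat_{\bm{a}}$ and fix $i$. The inequality $a_i^Tx_j\le a_i^Tx_i$ holds for $j\neq i$ by assumption, and trivially with equality for $j=i$, so it holds for all $j$. An arbitrary $y\in\co\{\bm{x}\}$ can be written as $y=\sum_j\lambda_jx_j$ with $\lambda_j\ge0$ and $\sum_j\lambda_j=1$. Then
\[a_i^T(y-x_i)=\sum_j\lambda_j\,a_i^T(x_j-x_i)\le 0.\]
This shows $a_i\in\mc{N}_{\co\{\bm{x}\}}(x_i)$. Note also that $x_i\in\co\{\bm{x}\}$, so the normal cone at $x_i$ is well defined.

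There is no real obstacle here. The only point requiring care is in ``$\Rightarrow$'': the constraints defining $\mc{C}^\flat_{\bm{a}}$ involve only the generators with $j\neq i$, and the case $j=i$ must be supplied as the trivial equality before the convex combination step is applied.
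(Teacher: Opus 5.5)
Your proposal is correct and is essentially the paper's proof. For ``$\Leftarrow$'' you test the normal-cone inequality at the generators $x_j$. For ``$\Rightarrow$'' you extend the generator inequalities to all of $\co\{\bm{x}\}$ via a convex combination and linearity; your remark that the $j=i$ term vanishes trivially is exactly what makes the paper's sum over all $j$ valid.
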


\begin{proof}
If $a_i\in\mc{N}_{\co\{\bm{x}\}}(x_i)$ for all $i$,
then by the definition of the normal cone $a_i^T(x_j-x_i)\le0$ for $i\neq j$,
which is the very definition of $\bm{x}\in\mc{C}^\flat_{\bm{a}}$.

Let $\bm{x}\in\mc{C}^\flat_{\bm{a}}$, fix $i$, and let $x\in\co\{\bm{x}\}$.
There exists $\lambda\in\R^N_+$ with $\mathbbm{1}^T\lambda=1$ and
$\sum_{j=1}^N\lambda_jx_j=x$, and we have
$a_i^T(x-x_i)=\sum_{j=1}^N\lambda_ja_i^T(x_j-x_i)\le0$
by definition of $\mc{C}^\flat_{\bm{a}}$.
Hence $a_i\in\mc{N}_{\co\{\bm{x}\}}(x_i)$.
\end{proof}

The following lemma follows directly from the definition of $\mc{C}^\flat_{\bm{a}}$.

\begin{lemma}\label{lemma:polyhedral:convex:cone}
The space $\mc{C}^\flat_{\bm{a}}\subset\R^{d\times N}$ is a polyhedral closed convex cone.
\end{lemma}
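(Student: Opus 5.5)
The plan is to read off from the definition that $\mc{C}^\flat_{\bm{a}}$ is the solution set of finitely many homogeneous linear inequalities on $\R^{d\times N}$. Polyhedrality, closedness, convexity and the cone property then all follow at once.

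For each ordered pair $(i,j)$ with $i\neq j$ I introduce the linear functional
\[\ell_{ij}:\R^{d\times N}\to\R,\quad \ell_{ij}(\bm{x}):=a_i^T(x_j-x_i).\]
It is linear in $\bm{x}$ because it is the composition of the linear map $\bm{x}\mapsto x_j-x_i$ with the linear functional $a_i^T$. By definition,
\[\mc{C}^\flat_{\bm{a}}=\bigcap_{i\neq j}\{\bm{x}\in\R^{d\times N}:\ell_{ij}(\bm{x})\le0\},\]
which is an intersection of $N(N-1)$ closed half-spaces, all with zero offset.

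\textbf{Polyhedral and closed convex.} A finite intersection of closed half-spaces is a polyhedron, hence closed and convex. Each half-space is closed as the preimage of $(-\infty,0]$ under a continuous map, and convex by linearity.

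\textbf{Cone.} The offsets are all zero, so $\ell_{ij}(\lambda\bm{x})=\lambda\ell_{ij}(\bm{x})\le0$ for every $\lambda\ge0$ and every $\bm{x}\in\mc{C}^\flat_{\bm{a}}$. Thus $\mc{C}^\flat_{\bm{a}}$ is closed under nonnegative scaling, and together with convexity it is a convex cone. In particular $0\in\mc{C}^\flat_{\bm{a}}$, so it is nonempty.

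There is no real obstacle here. The only point requiring care is to note explicitly that the inequalities are homogeneous, which is what gives the cone property. If a more explicit description is wanted, one can write $\ell_{ij}(\bm{x})=\langle A_{ij},\bm{x}\rangle$ with the Frobenius inner product, where $A_{ij}$ has column $j$ equal to $a_i$, column $i$ equal to $-a_i$, and all other columns zero. Then $\mc{C}^\flat_{\bm{a}}=\{\bm{x}:\langle A_{ij},\bm{x}\rangle\le0\ \text{for all}\ i\neq j\}$ is the polar of $\cone\{A_{ij}\}$, which is a polyhedral closed convex cone by standard duality.
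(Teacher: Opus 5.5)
Your proposal is correct and is exactly the argument the paper has in mind: the paper states that the lemma ``follows directly from the definition'', i.e.\ $\mc{C}^\flat_{\bm{a}}$ is the solution set of the finitely many homogeneous linear inequalities $a_i^T(x_j-x_i)\le0$, $i\neq j$. You have simply written out the details (closed half-spaces with zero offset giving closedness, convexity, polyhedrality and invariance under nonnegative scaling).
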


The number of extremal points of polytopes in $\mc{P}_{\bm{a}}^\flat$
cannot exceed $N$.

\begin{lemma}\label{lem:at:most:N:vertices}
If $\bm{x}\in\mc{C}^\flat_{\bm{a}}$, then $\#\ext\co\{\bm{x}\}\le N$.
\end{lemma}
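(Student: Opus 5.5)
This is a direct consequence of a standard fact about convex hulls of finite sets: every extreme point of $\co\{\bm{x}\}$ belongs to $\{\bm{x}\}$. Since $\{\bm{x}\}=\{x_1,\ldots,x_N\}$ has at most $N$ elements (fewer if some $x_i$ coincide), the bound $\#\ext\co\{\bm{x}\}\le N$ follows. The membership $\bm{x}\in\mc{C}^\flat_{\bm{a}}$ plays no role; the statement holds for every $\bm{x}\in\R^{d\times N}$.

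To make this self-contained, I would prove the inclusion $\ext\co\{\bm{x}\}\subset\{\bm{x}\}$ directly.

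First, let $e\in\ext\co\{\bm{x}\}$. By definition of the convex hull there is $\lambda\in\R^N_+$ with $\mathbbm{1}^T\lambda=1$ and $e=\sum_{j=1}^N\lambda_jx_j$.

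Suppose $e\neq x_j$ for all $j$. Then no $\lambda_j$ equals $1$, so at least two coefficients are positive. Pick $k$ with $\lambda_k\in(0,1)$ and write
\[
e=\lambda_kx_k+(1-\lambda_k)y,\qquad y:=\sum_{j\neq k}\frac{\lambda_j}{1-\lambda_k}x_j\in\co\{\bm{x}\}.
\]
Since $e\neq x_k$, we also have $y\neq x_k$. So $e$ is a proper convex combination of two distinct points of $\co\{\bm{x}\}$, which contradicts extremality. Hence $e\in\{\bm{x}\}$.

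The argument has no real obstacle; the only care needed is to use distinct points in the contradiction, which is exactly what $e\neq x_k$ provides.
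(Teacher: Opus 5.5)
Your proposal is correct and follows the paper's proof, which simply cites Proposition 2.2 of Ziegler for the inclusion $\ext(\co\{\bm{x}\})\subset\{\bm{x}\}$. Your self-contained argument for that inclusion is sound, and you are right that the hypothesis $\bm{x}\in\mc{C}^\flat_{\bm{a}}$ plays no role.
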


\begin{proof}
By \cite[Proposition 2.2]{Ziegler} we have $\ext(\co\{\bm{x}\})\subset\{\bm{x}\}$.
\end{proof}

The Hausdorff semi-distance of two arbitrary polytopes is bounded by the Hausdorff
semi-distance between the sets of points defining them.

\begin{lemma}\label{lemma:vertices:distance}
For $\bm{x},\bm{y}\in\R^{d\times N}$ we have
\[\dist(\co\{\bm{x}\},\co\{\bm{y}\})\le\dist(\{\bm{x}\},\{\bm{y}\}).\]
\end{lemma}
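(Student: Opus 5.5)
Set $\eps:=\dist(\{\bm{x}\},\{\bm{y}\})$. The plan is to take an arbitrary point of $\co\{\bm{x}\}$, write it as a convex combination of the $x_j$, and replace each $x_j$ by a nearby point of $\{\bm{y}\}$ using the same weights. The resulting point lies in $\co\{\bm{y}\}$ and is within distance $\eps$ of the original point. Taking the supremum over all points of $\co\{\bm{x}\}$ then gives the claim.

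\emph{Step 1.} For each $j$, the infimum in the definition of $\dist$ is a minimum over the finite set $\{\bm{y}\}$. So there is an index $k(j)\in\{1,\ldots,N\}$ with
\[\|x_j-y_{k(j)}\|=\min_{l}\|x_j-y_l\|\le\eps.\]

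\emph{Step 2.} Let $x\in\co\{\bm{x}\}$. As in the proof of Lemma \ref{lem:normal:cone:relationship}, choose $\lambda\in\R^N_+$ with $\mathbbm{1}^T\lambda=1$ and $x=\sum_j\lambda_jx_j$. Define
\[y:=\sum_j\lambda_jy_{k(j)}.\]
This is a convex combination of points of $\{\bm{y}\}$, so $y\in\co\{\bm{y}\}$. By the triangle inequality,
\[\|x-y\|=\Big\|\sum_j\lambda_j(x_j-y_{k(j)})\Big\|\le\sum_j\lambda_j\|x_j-y_{k(j)}\|\le\eps.\]

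\emph{Step 3.} Step 2 gives $\inf_{z\in\co\{\bm{y}\}}\|x-z\|\le\eps$ for every $x\in\co\{\bm{x}\}$. Taking the supremum over $x$ yields the statement.

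None of these steps is a real obstacle. The only point needing care is that $k(j)$ may coincide for different $j$. This is harmless, because $y$ is still a convex combination of points of $\{\bm{y}\}$: its weights are obtained by summing the $\lambda_j$ over each fibre of $k$.
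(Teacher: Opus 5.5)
Your proof is correct and follows essentially the same argument as the paper. Both choose a nearest point $y_{k(j)}\in\{\bm{y}\}$ for each $x_j$, reuse the convex weights of $x$ on these points, and bound $\|x-y\|$ via the triangle inequality.
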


\begin{proof}
Let $x\in\co\{\bm{x}\}$.
Then there exists $\lambda\in\R^N_+$ such that $\mathbbm{1}^T\lambda=1$ and
$\sum_{i=1}^N\lambda_ix_i=x$.
Select $j_i\in\argmin_{j\in\{1,\ldots,N\}}\|x_i-y_j\|$ for all $i$.
Then we have $y:=\sum_{i=1}^N\lambda_iy_{j_i}\in\co\{\bm{y}\}$, and
\begin{align*}
\dist(x,\co\{\bm{y}\})
&\le\|x-y\|
=\|\sum_{i=1}^N\lambda_i(x_i-y_{j_i})\|\\
&\le\sum_{i=1}^N\lambda_i\|x_i-y_{j_i}\|
=\sum_{i=1}^N\lambda_i\dist(x_i,\{\bm{y}\})
\le\dist(\{\bm{x}\},\{\bm{y}\}).
\end{align*}
Since $x$ was arbitrary, the desired statement holds.
\end{proof}

This immediately implies the following result.

\begin{corollary}\label{cor:embedding:Lipschitz}
The mapping
$\co:(\mc{C}^\flat_{\bm{a}},\dist_{\mc{C}^\flat_{\bm{a}}})\to(\mc{P}^\flat_{\bm{a}},\dist_H)$
is $1$-Lipschitz.
\end{corollary}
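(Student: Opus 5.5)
We will deduce the corollary directly from Lemma \ref{lemma:vertices:distance}, applied in both directions, together with a comparison between the Hausdorff distance of the finite point sets $\{\bm{x}\}$, $\{\bm{y}\}$ and the index-wise distance $\dist_{\mc{C}^\flat_{\bm{a}}}$.

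First, the map is well defined: for $\bm{x}\in\mc{C}^\flat_{\bm{a}}$ the set $\co\{\bm{x}\}$ belongs to $\mc{P}^\flat_{\bm{a}}$ by definition. Next, fix $\bm{x},\bm{y}\in\mc{C}^\flat_{\bm{a}}$. By Lemma \ref{lemma:vertices:distance},
\[\dist(\co\{\bm{x}\},\co\{\bm{y}\})\le\dist(\{\bm{x}\},\{\bm{y}\})=\max_{i}\min_{j}\|x_i-y_j\|.\]
Choosing $j=i$ in the inner minimum gives
\[\max_i\min_j\|x_i-y_j\|\le\max_i\|x_i-y_i\|=\dist_{\mc{C}^\flat_{\bm{a}}}(\bm{x},\bm{y}).\]
Exchanging the roles of $\bm{x}$ and $\bm{y}$ yields the same bound for $\dist(\co\{\bm{y}\},\co\{\bm{x}\})$. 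Taking the maximum of the two semi-distances gives
\[\dist_H(\co\{\bm{x}\},\co\{\bm{y}\})\le\dist_{\mc{C}^\flat_{\bm{a}}}(\bm{x},\bm{y}),\]
which is the $1$-Lipschitz property.

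There is no genuine obstacle. The only point needing care is that $\dist_{\mc{C}^\flat_{\bm{a}}}$ matches points by index while the semi-distance between the finite sets allows arbitrary matching, so the inequality goes in the favorable direction. Note also that membership in $\mc{C}^\flat_{\bm{a}}$ plays no role in the estimate; the bound holds on all of $\R^{d\times N}$.
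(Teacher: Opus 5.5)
Your proof is correct and is the paper's argument: you apply Lemma~\ref{lemma:vertices:distance}, bound $\dist(\{\bm{x}\},\{\bm{y}\})\le\dist_{\mc{C}^\flat_{\bm{a}}}(\bm{x},\bm{y})$ by matching indices, and then exchange the roles of $\bm{x}$ and $\bm{y}$. Your remark that membership in $\mc{C}^\flat_{\bm{a}}$ is never used, so the bound holds on all of $\R^{d\times N}$, is also accurate.
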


\begin{proof}
For any $\bm{x},\bm{y}\in\mc{C}^\flat_{\bm{a}}$, we have
\[\dist(\co\{\bm{x}\},\co\{\bm{y}\})\le\dist(\{\bm{x}\},\{\bm{y}\})
\le\dist_{\mc{C}^\flat_{\bm{a}}}(\bm{x},\bm{y}).\]
The opposite inequality follows by reversing the roles of $\bm{x}$ and $\bm{y}$.
\end{proof}

It is tempting to think that for points $\bm{x},\bm{y}\in\mc{C}^\flat_{\bm{a}}$
Lemma \ref{lemma:vertices:distance} should simplify
to $\dist(\co\{\bm{x}\},\co\{\bm{y}\})=\dist_{\mc{C}^\flat_{\bm{a}}}(\bm{x},\bm{y})$,
because $\mc{C}^\flat_{\bm{a}}$ restricts how the $x_i$ and $y_i$ can be ordered in space.
This is false.

\begin{example}\label{counterexample:vertex:order}
Let $a_1=(1,0)^T$, $a_2=(0,1)^T$ and $a_3=(-1,0)^T$, and consider
$(x_1,x_2,x_3)\in\mc{C}^\flat_{\bm{a}}$ and $(y_1,y_2,y_3)\in\mc{C}^\flat_{\bm{a}}$
given by $x_1=(1,0)^T$, $x_2=(-\frac45,\frac15)^T$, $x_3=(-1,0)^T$
and $y_1=(1,0)^T$, $y_2=(\frac45,\frac15)^T$, $y_3=(-1,0)^T$.
We have
\begin{align*}
\dist_{\mc{C}^\flat_{\bm{a}}}(\bm{x},\bm{y})
&\ge\|x_2-y_2\|
=8/5>\sqrt{2}/5\\
&=\|x_2-y_3\|\ge\dist(x_2,\co\{\bm{y}\})
=\dist(\co\{\bm{x}\},\co\{\bm{y}\}).
\end{align*}
\end{example}

\begin{figure}
\centering
\begin{tabular}{lcr}

\begin{tikzpicture}[scale=1.5]
  \fill[blue!15] (1,0) -- (-0.8,0.2) -- (-1,0) -- cycle;
  \draw[blue!40] (1,0) -- (-0.8,0.2) -- (-1,0) -- cycle;
  \fill (1,0) circle (1pt) node [below] {$x_1$};
  \fill (-0.8,0.2) circle (1pt) node [above left] {$x_2$};
  \fill (-1,0) circle (1pt) node [below] {$x_3$};
  \coordinate (O) at (0,0);
  \fill (O) circle (1pt) node [below] {$0$};
\end{tikzpicture}

&

\begin{tikzpicture}[scale=1.5]
  \draw[->, thick] (0,0) -- (1,0) node[below] {$a_1$};
  \draw[->, thick] (0,0) -- (0,1) node[left] {$a_2$};
  \draw[->, thick] (0,0) -- (-1,0) node[below] {$a_3$};
  \coordinate (O) at (0,0);
  \fill (O) circle (1pt) node [below] {$0$};
\end{tikzpicture}

&

\begin{tikzpicture}[scale=1.5]
  \fill[blue!15] (1,0) -- (0.8,0.2) -- (-1,0) -- cycle;
  \draw[blue!40] (1,0) -- (0.8,0.2) -- (-1,0) -- cycle;
  \fill (1,0) circle (1pt) node[below] {$y_1$};
  \fill (0.8,0.2) circle (1pt) node[above right] {$y_2$};
  \fill (-1,0) circle (1pt) node[below] {$y_3$};
  \coordinate (O) at (0,0);
  \fill (O) circle (1pt) node [below] {$0$};
\end{tikzpicture}

\end{tabular}
\caption{In Example \ref{counterexample:vertex:order}, we have $(x_1,x_2,x_3)\in\mc{C}^\flat_{\bm{a}}$
(left) and $(y_1,y_2,y_3)\in\mc{C}^\flat_{\bm{a}}$ (right) with same $\bm{a}$ (middle), but
$\|x_2-y_2\|>\dist(x_2,\{y_1,y_2,y_3\})$.}
\label{figure:no:order}
\end{figure}

Strict inequalities in the definition of $\bm{x}\in\mc{C}^\flat_{\bm{a}}$ generate vertices
in $\co\{\bm{x}\}$.

\begin{lemma}\label{lem:strict:inequality:vertex}
Let $\bm{x}\in\mc{C}^\flat_{\bm{a}}$, and let $i$ be such that
$a_i^Tx_j<a_i^Tx_i$ for $i\neq j$.
Then $x_i\in\ext(\co\{\bm{x}\})$ 
and $x_i\neq x_j$ for $i\neq j$.
\end{lemma}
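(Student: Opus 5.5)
The plan is to show that $x_i$ is exposed by the linear functional $a_i^T(\cdot)$. An exposed point is extremal, and distinctness from the other points will come for free.

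First, distinctness. Fix $j\neq i$. The hypothesis gives $a_i^Tx_j<a_i^Tx_i$. If we had $x_j=x_i$, then $a_i^Tx_j=a_i^Tx_i$, which is a contradiction. Hence $x_i\neq x_j$ for all $j\neq i$.

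Second, the exposed-face argument. Let $x\in\co\{\bm{x}\}$ satisfy $a_i^Tx=a_i^Tx_i$. As in the proof of Lemma \ref{lem:normal:cone:relationship}, write $x=\sum_{j=1}^N\lambda_jx_j$ with $\lambda\in\R^N_+$ and $\mathbbm{1}^T\lambda=1$. Then
\[0=a_i^T(x-x_i)=\sum_{j\neq i}\lambda_j\,a_i^T(x_j-x_i).\]
Every term on the right is $\le0$, and each coefficient $a_i^T(x_j-x_i)$ is strictly negative. Therefore $\lambda_j=0$ for all $j\neq i$, so $\lambda_i=1$ and $x=x_i$. Consequently the face $\{x\in\co\{\bm{x}\}:a_i^Tx=\max_{y\in\co\{\bm{x}\}}a_i^Ty\}$ equals $\{x_i\}$. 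Here the maximum equals $a_i^Tx_i$ by Lemma \ref{lem:normal:cone:relationship}.

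Third, exposed implies extremal. Suppose $x_i=ty+(1-t)z$ with $y,z\in\co\{\bm{x}\}$ and $t\in(0,1)$. Then $a_i^Tx_i=t\,a_i^Ty+(1-t)\,a_i^Tz$. Both $a_i^Ty$ and $a_i^Tz$ are at most $a_i^Tx_i$, so both must equal $a_i^Tx_i$. By the second step this forces $y=z=x_i$, hence $x_i\in\ext(\co\{\bm{x}\})$.

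None of these steps presents a real obstacle. The only point needing care is the second step, where strictness of the inequality is what kills all coefficients $\lambda_j$ with $j\neq i$.
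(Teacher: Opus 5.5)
Your proof is correct and uses the same key computation as the paper: write a point of $\co\{\bm{x}\}$ as a convex combination of the $x_j$, and let the strict inequalities $a_i^Tx_j<a_i^Tx_i$ force all weights $\lambda_j$ with $j\neq i$ to vanish (the distinctness argument is identical). The difference is only in organization. The paper argues by contradiction from a representation $x_i=\sum_j\lambda_jx_j$ with $\lambda\neq e_i$, tacitly using that a non-extreme point has such a representation. You instead first show that $x_i$ is the unique maximizer of $a_i^T(\cdot)$ and then check extremality from the definition, which makes that tacit step explicit and also yields the uniqueness the paper re-derives in Theorem \ref{construction:site} b).
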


\begin{proof}
If $x_i\notin\ext(\co\{\bm{x}\})$
then there exists $\lambda\in\R^N_+$ with $\mathbbm{1}^T\lambda=1$, $\lambda\neq e_i$ and
$\sum_{j=1}^N\lambda_jx_j=x_i$.
But then $\lambda_j>0$ for at least one $j\neq i$, and we arrive at the contradiction
\[a_i^Tx_i=\sum_{j=1}^N\lambda_ja_i^Tx_j<\sum_{j=1}^N\lambda_ja_i^Tx_i=a_i^Tx_i.\]

If $x_i=x_j$ and $i\neq j$, we obtain the contradiction
$a_i^Tx_i=a_i^Tx_j<a_i^Tx_i$.
\end{proof}

The converse of Lemma \ref{lem:strict:inequality:vertex} is not true.

\begin{example}\label{borderline}
Let $a_1=(\sqrt3/2,-1/2)^T$, $a_2=(0,1)^T$, $a_3=(-\sqrt3/2,-1/2)^T$
and consider $\bm{x}\in\mc{C}^\flat_{\bm a}$ given by
$x_1=(1,1)^T$, $x_2=(-1,1)^T$, $x_3=(0,-1)^T$.
Then clearly $\{\bm{x}\}=\ext(\co\{\bm{x}\})$ and $x_i\neq x_j$ for $i\neq j$,
but $a_1^Tx_3=a_1^Tx_1$,
$a_2^Tx_1=a_2^Tx_2$ and
$a_3^Tx_2=a_3^Tx_3$.
This is illustrated in Figure \ref{fig:borderline}.
\end{example}

\begin{figure}[b]
\centering
\begin{tabular}{lr}

\begin{tikzpicture}[scale=0.9]
\useasboundingbox (-1.7,-1.7) rectangle (2,2);

\coordinate (X1) at (1,1);
\coordinate (X2) at (-1,1);
\coordinate (X3) at (0,-1);

\coordinate (A1) at ($(X1)+(0.894,-0.447)$);
\coordinate (A2) at ($(X2)+(0,1)$);
\coordinate (A3) at ($(X3)+(-0.894,-0.447)$);

\fill[blue!20] (X1)--(X2)--(X3)--cycle;
\draw[thick] (X1)--(X2)--(X3)--cycle;

\fill (X1) circle (2pt) node[above right] {$x_1$};
\fill (X2) circle (2pt) node[above left] {$x_2$};
\fill (X3) circle (2pt) node[below] {$x_3$};

\draw[->,thick] (X1) -- (A1) node[right] {$a_1$};
\draw[->,thick] (X2) -- (A2) node[above] {$a_2$};
\draw[->,thick] (X3) -- (A3) node[left] {$a_3$};

\pic[draw,angle radius=5pt] {right angle=X3--X1--A1};
\pic[draw,angle radius=5pt] {right angle=X1--X2--A2};
\pic[draw,angle radius=5pt] {right angle=X2--X3--A3};

\end{tikzpicture}

&

\begin{tikzpicture}[scale=0.9]
\useasboundingbox (-1.7,-1.7) rectangle (2,2);

\fill (0,0) circle (2pt);

\draw[->,thick] (0,0) -- (0.866,-0.5) node[right] {$a_1$};
\draw[->,thick] (0,0) -- (0,1) node[above] {$a_2$};
\draw[->,thick] (0,0) -- (-0.866,-0.5) node[left] {$a_3$};
\end{tikzpicture}
\end{tabular}
\caption{Configuration discussed in Example \ref{borderline}.
All points $x_i$ from the tuple $\bm{x}=(x_1,x_2,x_3)\in\mc{C}^\flat_{\bm{a}}$
are vertices despite several nonstrict inequalities.}
\label{fig:borderline}
\end{figure}

The space $\mc{P}^\flat_{\bm{a}}$ is \emph{flat} in $\mc{K}^d$.

\begin{theorem}\label{thm:P:closed}
The space $\mc{P}^\flat_{\bm{a}}\subset\mc{K}^d$ is closed w.r.t.\ $\dist_H$,
and $\interior\mc{P}^\flat_{\bm{a}}=\emptyset$.
\end{theorem}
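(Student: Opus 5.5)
For closedness, I will take a sequence $P_k=\co\{\bm{x}^k\}\in\mc{P}^\flat_{\bm{a}}$ with $\bm{x}^k\in\mc{C}^\flat_{\bm{a}}$ and $P_k\to P\in\mc{K}^d$ in $\dist_H$, and show $P\in\mc{P}^\flat_{\bm{a}}$. Since $P_k$ converges, the sizes $\|P_k\|$ are bounded. Because each $x^k_i\in P_k$, we get $\|x^k_i\|\le\|P_k\|$, so the sequence $(\bm{x}^k)_k$ is bounded in $\R^{d\times N}$. By Bolzano--Weierstrass we may pass to a subsequence with $\bm{x}^k\to\bm{x}$. Lemma \ref{lemma:polyhedral:convex:cone} says $\mc{C}^\flat_{\bm{a}}$ is closed, so $\bm{x}\in\mc{C}^\flat_{\bm{a}}$. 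Corollary \ref{cor:embedding:Lipschitz} says $\co$ is $1$-Lipschitz, so $\co\{\bm{x}^k\}\to\co\{\bm{x}\}$ in $\dist_H$. Uniqueness of Hausdorff limits then gives $P=\co\{\bm{x}\}\in\mc{P}^\flat_{\bm{a}}$. This part is routine; the only thing to be careful about is keeping the points bounded, which is exactly what membership $x_i^k\in P_k$ provides.

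For the empty interior, I will fix an arbitrary $P=\co\{\bm{x}\}\in\mc{P}^\flat_{\bm{a}}$ and $\eps>0$, and produce $K\in\mc{K}^d$ with $\dist_H(K,P)\le\eps$ and $K\notin\mc{P}^\flat_{\bm{a}}$. The natural invariant is the count of extreme points: by Lemma \ref{lem:at:most:N:vertices}, every member of $\mc{P}^\flat_{\bm{a}}$ has at most $N$ extreme points. I will therefore take $K:=P+B_\eps(0)$, the Minkowski sum with a closed ball. Then $\dist_H(K,P)=\eps$.

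The step needing an argument is that $K$ has infinitely many extreme points. I would show that $K$ is strictly convex, i.e. every boundary point is extreme. Suppose a boundary point $z$ is the midpoint of distinct $u,v\in K$. The function $\dist(\cdot,P)$ is convex, and it equals $\eps$ on $\partial K$, so the whole segment $[u,v]$ lies in $\partial K$ at distance exactly $\eps$ from $P$. Let $p(\cdot)$ be the metric projection onto $P$. Strict convexity of the Euclidean norm forces $u-p(u)=v-p(v)$, so the segment is a translate of a segment in $P$ by a fixed vector $w$ with $\|w\|=\eps$. But that $w$ lies in the normal cone of $P$ along the segment, and then a small rotation of the normal direction gives a point of $K$ strictly beyond the supposed supporting line at $z$. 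Rather than chase this, I would use the simpler fact that for each unit $u$, the face of $K$ in direction $u$ is $F_P(u)+\eps u$. The point maximizing $u^Tx$ over $K$ includes $x^*+\eps u$ for $x^*$ a vertex of $P$ in $F_P(u)$, and that point is extreme in $K$ because it is an extreme point of the face $F_P(u)+\eps u$. Varying $u$ over a whole normal cone $\mc{N}_P(x^*)$ of a vertex (a full-dimensional cone when $\dim P=d$, and in general containing a relatively open cone of dimension $\ge1$ intersected with the sphere, infinitely many directions since $d\ge2$) yields infinitely many distinct extreme points $x^*+\eps u$. This contradicts Lemma \ref{lem:at:most:N:vertices}, so $K\notin\mc{P}^\flat_{\bm{a}}$, and hence $\interior\mc{P}^\flat_{\bm{a}}=\emptyset$.
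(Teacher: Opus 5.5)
Your closedness argument is the same as the paper's: the parameters are bounded because $x^k_i\in P_k$, you extract a convergent subsequence, and you use closedness of $\mc{C}^\flat_{\bm{a}}$, the $1$-Lipschitz property of $\co$, and uniqueness of Hausdorff limits.

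For $\interior\mc{P}^\flat_{\bm{a}}=\emptyset$ you take a different route. The paper uses the smoothing result following \cite[Theorem 3.4.1]{Schneider} to get a body $K_\eps$ near $P$ whose boundary is a diffeomorphic image of $S^{d-1}$. It then cites Schneider again for the fact that such a body cannot be a polytope. You instead take the explicit parallel body $P+B_\eps(0)$, identify its support sets as $F_P(u)+\eps u$, and produce infinitely many extreme points. Lemma \ref{lem:at:most:N:vertices} excludes such a body from $\mc{P}^\flat_{\bm{a}}$. Your argument is elementary and self-contained, gives a concrete approximant at Hausdorff distance exactly $\eps$, and reuses the vertex bound already proved. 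The paper's version is shorter on the page but rests on two external facts about smooth convex bodies.

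Two points need fixing.

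First, delete the strict-convexity sentence rather than just not pursuing it, because the claim is false. $P+B_\eps(0)$ is \emph{not} strictly convex unless $P$ is a single point. For example, if $P$ is a segment, it is a capsule whose straight sides consist of non-extreme boundary points. So your \emph{small rotation} step cannot be made to work.

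Second, the parenthetical about the normal cone is wrong as written: a cone of dimension $1$ meets $S^{d-1}$ in a single point. What is true is that the normal cone of a polytope at a vertex is always full-dimensional. A vertex $x^*$ is strictly separated from the finitely many other vertices by some $c$, and these finitely many strict inequalities persist for all $c'$ near $c$. Hence $\mc{N}_P(x^*)\cap S^{d-1}$ is infinite since $d\ge2$. Alternatively, choose for each $u\in S^{d-1}$ a vertex $v(u)\in F_P(u)$. By pigeonhole, one of the at most $N$ vertices serves infinitely many $u$, and these give infinitely many distinct extreme points $v(u)+\eps u$.

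With these two corrections the proof is complete.
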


\begin{proof}
Let $(\bm{x}_k)_k\subset\mc{C}^\flat_{\bm{a}}$ and $K\in\mc{K}^d$ with
$\lim_{k\to\infty}\dist_H(\co\{\bm{x}_k\},K)=0$.
Then $(\co\{\bm{x}_k\})_k$ is bounded, and since $\{\bm{x}_k\}\subset\co\{\bm{x}_k\}$ for all $k$,
it follows that $(\bm{x}_k)_k\subset\mc{C}^\flat_{\bm{a}}$ is bounded.
In particular, there exists an accumulation point $\bm{x}\in\R^{d\times N}$ of $(\bm{x}_k)_k$.
By Lemma \ref{lemma:polyhedral:convex:cone}, we have $\bm{x}\in\mc{C}^\flat_{\bm{a}}$.
By Corollary \ref{cor:embedding:Lipschitz}, the element $\co\{\bm{x}\}$ is an accumulation point
of $(\co\{\bm{x}_k\})_k$.
By uniqueness of the limit, we have $K=\co\{\bm{x}\}$.
All in all, the space $\mc{P}^\flat_{\bm{a}}\subset\mc{K}^d$ is closed.

\medskip

Let $P\in\mc{P}^\flat_{\bm{a}}$ and $\eps>0$.
The discussion following \cite[Theorem 3.4.1]{Schneider} provides
$K_\eps\in\mc{K}^d$ such that $\dist_H(P,K_\eps)\le\eps$ and $\partial K_\eps$
is a diffeomorphic image of the unit sphere, which implies that $K_\eps$ is not a polytope,
see e.g.\ \cite[Corollary 1.7.3 and Theorem 1.7.4]{Schneider}.
Hence $\interior\mc{P}^\flat_{\bm{a}}=\emptyset$.
\end{proof}

\begin{example}\label{ex:minkowski}
The space $\mc{P}^\flat_{\bm{a}}\subset\mc{K}^d$ is not convex w.r.t.\ Minkowski addition and scalar
multiplication:
Let $a_1=(1,0)^T$ and $a_2=(-1,0)^T$, and let elements $\bm{x},\bm{y}\in\interior\mc{C}^\flat_{\bm{a}}$ be given by
$x_1=(1,1)^T$, $x_2=(-1,-1)^T$, $y_1=(1,-1)^T$ and $y_2=(-1,1)^T$.
Then
\[\frac12\co\{\bm{x}\}+\frac12\co\{\bm{y}\}=\co\{(1,0)^T,(0,1)^T,(-1,0)^T,(0,-1)^T\}\notin\mc{P}^\flat_{\bm{a}}.\]
\end{example}

\begin{figure}[t]
\centering
\begin{tabular}{l@{\hspace{1.2cm}}c@{\hspace{1.2cm}}r}

\begin{tikzpicture}[scale=0.8]
\useasboundingbox (-1.8,-1.8) rectangle (1.8,1.8);

\coordinate (X1) at (1,1);
\coordinate (X2) at (-1,-1);
\coordinate (Y1) at (1,-1);
\coordinate (Y2) at (-1,1);

\draw[line width=1.5pt,blue!20] (X1)--(X2);
\draw[line width=1.5pt,blue!20] (Y1)--(Y2);

\fill (X1) circle (2pt) node[above right] {$x_1$};
\fill (X2) circle (2pt) node[below left] {$x_2$};
\fill (Y1) circle (2pt) node[below right] {$y_1$};
\fill (Y2) circle (2pt) node[above left] {$y_2$};

\draw[->,thick] (X1)--++(1,0) node[right] {$a_1$};
\draw[->,thick] (Y1)--++(1,0) node[right] {$a_1$};
\draw[->,thick] (X2)--++(-1,0) node[left] {$a_2$};
\draw[->,thick] (Y2)--++(-1,0) node[left] {$a_2$};

\node at (1.5,0.3) {$\co\{\bm{x}\}$};
\node at (-1.55,0.3) {$\co\{\bm{y}\}$};

\end{tikzpicture}

&

\begin{tikzpicture}[scale=0.8]
\useasboundingbox (-1.8,-1.8) rectangle (1.8,1.8);

\fill (0,0) circle (2pt);

\draw[->,thick] (0,0)--(1,0) node[right] {$a_1$};
\draw[->,thick] (0,0)--(-1,0) node[left] {$a_2$};

\end{tikzpicture}

&

\begin{tikzpicture}[scale=0.8]
\useasboundingbox (-1.8,-1.8) rectangle (1.8,1.8);

\coordinate (N) at (0,1);
\coordinate (E) at (1,0);
\coordinate (S) at (0,-1);
\coordinate (W) at (-1,0);

\fill[blue!20] (N)--(E)--(S)--(W)--cycle;
\draw[thick] (N)--(E)--(S)--(W)--cycle;

\foreach \P in {N,E,S,W}
  \fill (\P) circle (2pt);

\draw[->,thick] (E)--++(1,0) node[right] {$a_1$};
\draw[->,thick] (W)--++(-1,0) node[left] {$a_2$};

\node at (0,-1.45)
{$\frac12\co\{\bm{x}\}+\frac12\co\{\bm{y}\}$};
\end{tikzpicture}

\end{tabular}
\caption{Configuration discussed in Example \ref{ex:minkowski}.
We have $\co\{\bm{x}\}\in\mc{P}^\flat_{\bm{a}}$ and $\co\{\bm{y}\}\in\mc{P}^\flat_{\bm{a}}$
but $\frac12\co\{\bm{x}\}+\frac12\co\{\bm{y}\}\notin\mc{P}^\flat_{\bm{a}}$.}
\label{fig:minkowski}
\end{figure}

\section{The cone $\interior\mc{C}^\flat_{\bm{a}}$}
\label{sec:interior}

Now we characterize the interior of $\mc{C}^\flat_{\bm{a}}$.

\begin{theorem}\label{theorem:int:CA}
We have
$\interior\mc{C}^\flat_{\bm{a}}=\{\bm{x}\in\R^{d\times N}:a_i^Tx_j<a_i^Tx_i,\ i\neq j\}$
and for any $t>0$, we have $t\bm{a}\in\interior\mc{C}^\flat_{\bm{a}}$.
\end{theorem}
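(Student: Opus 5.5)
The plan is to prove the two inclusions for the interior separately, then check the statement about $t\bm{a}$ directly. Set
\[S:=\{\bm{x}\in\R^{d\times N}:a_i^Tx_j<a_i^Tx_i,\ i\neq j\}.\]
For each ordered pair $(i,j)$ with $i\neq j$, define the linear functional $\ell_{ij}:\R^{d\times N}\to\R$ by $\ell_{ij}(\bm{x}):=a_i^T(x_i-x_j)$. Then $\mc{C}^\flat_{\bm{a}}=\bigcap_{i\neq j}\{\ell_{ij}\ge0\}$ and $S=\bigcap_{i\neq j}\{\ell_{ij}>0\}$.

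The inclusion $S\subset\interior\mc{C}^\flat_{\bm{a}}$ is immediate. Each $\ell_{ij}$ is continuous, so $S$ is a finite intersection of open sets. Hence $S$ is open, and it is contained in $\mc{C}^\flat_{\bm{a}}$.

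For the reverse inclusion I argue by contradiction. Let $\bm{x}\in\interior\mc{C}^\flat_{\bm{a}}$ and suppose $\ell_{ij}(\bm{x})=0$ for some $i\neq j$. I perturb only the $j$-th column: set $\bm{x}^\eps$ equal to $\bm{x}$ except $x^\eps_j:=x_j+\eps a_i$. Then
\[\ell_{ij}(\bm{x}^\eps)=a_i^T(x_i-x_j)-\eps\|a_i\|^2=-\eps<0,\]
so $\bm{x}^\eps\notin\mc{C}^\flat_{\bm{a}}$. Since $\bm{x}^\eps\to\bm{x}$ as $\eps\to0$, this contradicts $\bm{x}$ being an interior point. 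The key point is that $\ell_{ij}$ is a nonzero functional (its coefficient on $x_j$ is $-a_i\neq0$), so moving in the direction of its gradient leaves the half-space. This step is the only one needing care, and it is short.

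For the second claim, note that $t\bm{a}\in S$ for $t>0$ exactly when $a_i^Ta_j<a_i^Ta_i=1$ for all $i\neq j$. By Cauchy--Schwarz, $a_i^Ta_j\le\|a_i\|\|a_j\|=1$. Equality would force $a_j=a_i$, since both are unit vectors. That is excluded because the $a_i$ are pairwise distinct. Hence $t\bm{a}\in S=\interior\mc{C}^\flat_{\bm{a}}$.
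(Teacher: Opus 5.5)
Your proof is correct and takes the paper's route. The paper only cites continuity and ``standard arguments'' for the first assertion, which your openness argument and the perturbation $x_j\mapsto x_j+\eps a_i$ (using $a_i\neq0$) spell out, and it proves the claim about $t\bm{a}$ exactly as you do, via the strict Cauchy--Schwarz inequality for distinct unit vectors.
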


\begin{proof}
Continuity of $x\mapsto a_i^Tx$ for all $i$ and standard arguments imply the first assertion.
Let $i\neq j$.
Since $a_i\neq a_j$ and $\|a_k\|=1$ for all $k$,
we have $a_i^T(ta_j)<t=a_i^T(ta_i)$, which shows the second statement.
\end{proof}

The following result is a recipe for generating an interior point
near a given point $\bm{x}\in\mc{C}^\flat_{\bm{a}}$.
We have $\bm{z}=t\bm{a}$ in mind.

\begin{corollary}
For every $\bm{x}\in\mc{C}^\flat_{\bm{a}}$, $\bm{z}\in\interior\mc{C}^\flat_{\bm{a}}$
and $\lambda\in(0,1]$ we have
\[\bm{y}(\lambda):=(1-\lambda)\bm{x}+\lambda\bm{z}\in\interior\mc{C}^\flat_{\bm{a}}\]
and $\dist_{\mc{C}^\flat_{\bm{a}}}(\bm{x},\bm{y}(\lambda))
=\lambda\dist_{\mc{C}^\flat_{\bm{a}}}(\bm{x},\bm{z})$.
\end{corollary}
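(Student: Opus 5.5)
The plan is to use the characterization of $\interior\mc{C}^\flat_{\bm{a}}$ from Theorem \ref{theorem:int:CA} for membership, and a direct computation for the distance identity.

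For membership, fix $i\neq j$. Since $\bm{x}\in\mc{C}^\flat_{\bm{a}}$ we have $a_i^T(x_j-x_i)\le0$. Since $\bm{z}\in\interior\mc{C}^\flat_{\bm{a}}$, Theorem \ref{theorem:int:CA} gives $a_i^T(z_j-z_i)<0$. By linearity,
\[a_i^T\bigl(y_j(\lambda)-y_i(\lambda)\bigr)=(1-\lambda)\,a_i^T(x_j-x_i)+\lambda\, a_i^T(z_j-z_i).\]
For $\lambda\in(0,1]$, the first term is nonpositive because $1-\lambda\ge0$, and the second is strictly negative because $\lambda>0$. Hence the sum is strictly negative for every pair $i\neq j$, and Theorem \ref{theorem:int:CA} yields $\bm{y}(\lambda)\in\interior\mc{C}^\flat_{\bm{a}}$. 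Alternatively, one could invoke the standard fact that the half-open segment from a point of a convex set to an interior point lies in the interior, using Lemma \ref{lemma:polyhedral:convex:cone}. The explicit inequality argument is more self-contained, so I would present that.

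For the distance, observe that componentwise $x_i-y_i(\lambda)=\lambda(x_i-z_i)$. Therefore
\[\dist_{\mc{C}^\flat_{\bm{a}}}\bigl(\bm{x},\bm{y}(\lambda)\bigr)=\max_i\lambda\|x_i-z_i\|=\lambda\,\dist_{\mc{C}^\flat_{\bm{a}}}(\bm{x},\bm{z}),\]
where $\lambda\ge0$ is pulled out of the norm and the maximum.

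There is no real obstacle. The only point needing care is that strictness comes solely from the $\bm{z}$-term, which is why $\lambda=0$ must be excluded.
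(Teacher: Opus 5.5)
Your proof is correct and follows essentially the same route as the paper: membership comes from applying the strict-inequality characterization of $\interior\mc{C}^\flat_{\bm{a}}$ in Theorem \ref{theorem:int:CA} to the convex combination, and the distance identity comes from $x_i-y_i(\lambda)=\lambda(x_i-z_i)$. Your remark that strictness comes only from the $\bm{z}$-term, which is why $\lambda>0$ is needed, makes explicit a point the paper's proof leaves implicit.
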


\begin{proof}
For $i\neq j$, Theorem \ref{theorem:int:CA} and
$\bm{z}\in\interior\mc{C}^\flat_{\bm{a}}$ imply that
\[
a_i^Ty_j(\lambda)
=a_i^T((1-\lambda)x_j+\lambda z_j)
<a_i^T((1-\lambda)x_i+\lambda z_i)
=a_i^Ty_i(\lambda),
\]
so again by Theorem \ref{theorem:int:CA}, we have
$\bm{y}(\lambda)\in\interior\mc{C}^\flat_{\bm{a}}$.
Finally, we have
\begin{align*}
\dist_{\mc{C}^\flat_{\bm{a}}}(\bm{x},\bm{y}(\lambda))
&=\max_i\|x_i-y_i(\lambda)\|
=\lambda\max_i\|x_i-z_i\|
=\lambda\dist_{\mc{C}^\flat_{\bm{a}}}(\bm{x},\bm{z}).
\end{align*}
\end{proof}

For $\bm{x}\in\interior\mc{C}^\flat_{\bm{a}}$ no $x_i$ is redundant.
This is a key property we were aiming for with our construction.

\begin{theorem}\label{theorem:interior:vertices}
Let $\bm{x}\in\interior\mc{C}^\flat_{\bm{a}}$.
Then we have
\[\{\bm{x}\}=\ext(\co\{\bm{x}\})
\quad\text{and}\quad
\#\{\bm{x}\}=\#\ext(\co\{\bm{x}\})=N.\]
\end{theorem}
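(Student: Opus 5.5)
The plan is to combine the characterization of the interior from Theorem \ref{theorem:int:CA} with Lemma \ref{lem:strict:inequality:vertex}, and then use Lemma \ref{lem:at:most:N:vertices} (or the inclusion $\ext(\co\{\bm{x}\})\subset\{\bm{x}\}$ from its proof) for the reverse inclusion.

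First, let $\bm{x}\in\interior\mc{C}^\flat_{\bm{a}}$. By Theorem \ref{theorem:int:CA}, every index $i$ satisfies $a_i^Tx_j<a_i^Tx_i$ for all $j\neq i$. Lemma \ref{lem:strict:inequality:vertex} therefore applies to every $i$. It yields $x_i\in\ext(\co\{\bm{x}\})$ for all $i$, which gives $\{\bm{x}\}\subset\ext(\co\{\bm{x}\})$. It also yields $x_i\neq x_j$ whenever $i\neq j$, so the $N$ points are pairwise distinct and $\#\{\bm{x}\}=N$.

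Second, the reverse inclusion $\ext(\co\{\bm{x}\})\subset\{\bm{x}\}$ is the fact cited in the proof of Lemma \ref{lem:at:most:N:vertices}: every extreme point of the convex hull of a finite set belongs to that set. Alternatively, an extreme point of $\co\{\bm{x}\}$ that is written as a convex combination of the $x_j$ must coincide with one of them. Combining the two inclusions gives $\{\bm{x}\}=\ext(\co\{\bm{x}\})$, and hence $\#\ext(\co\{\bm{x}\})=\#\{\bm{x}\}=N$.

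No step poses a real obstacle, since the earlier results do all the work. The only point requiring care is to note explicitly that distinctness of the $x_i$ is what turns the set equality into the cardinality count $N$, rather than merely a count of at most $N$.
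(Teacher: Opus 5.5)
Your proposal is correct and follows essentially the same route as the paper. The paper also gets the strict inequalities from Theorem \ref{theorem:int:CA}, applies Lemma \ref{lem:strict:inequality:vertex} to obtain both $\{\bm{x}\}\subset\ext(\co\{\bm{x}\})$ and pairwise distinctness, and cites \cite[Proposition 2.2]{Ziegler} for the reverse inclusion $\ext(\co\{\bm{x}\})\subset\{\bm{x}\}$.
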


\begin{proof}
By \cite[Proposition 2.2]{Ziegler} we have $\ext(\co\{\bm{x}\})\subset\{\bm{x}\}$.
Since Theorem \ref{theorem:int:CA} gives $a_i^Tx_j<a_i^Tx_i$ for $i\neq j$,
Lemma \ref{lem:strict:inequality:vertex} yields $x_i\in\ext(\co\{\bm{x}\})$ for all $i$.
All in all, we see that $\ext(\co\{\bm{x}\})=\{\bm{x}\}$.
Since by Lemma \ref{lem:strict:inequality:vertex}, we have $\#\{\bm{x}\}=N$,
the second statement ensues.
\end{proof}

The following result shows that any redundancy in the parameterization
$\mc{C}^\flat_{\bm{a}}$ occurs on the boundary.

\begin{corollary}\label{cor:co:injective}
The restriction $\co:\interior\mc{C}^\flat_{\bm{a}}\to\mc{P}^\flat_{\bm{a}}$
is injective.
\end{corollary}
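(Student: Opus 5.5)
We take $\bm{x},\bm{y}\in\interior\mc{C}^\flat_{\bm{a}}$ with $\co\{\bm{x}\}=\co\{\bm{y}\}=:P$ and aim to show $x_i=y_i$ for every $i$. The plan is to show that, for each $i$, the point $x_i$ is the \emph{unique} maximizer of the linear functional $a_i^T(\cdot)$ over $P$. The same statement for $\bm{y}$ then forces $y_i$ to be that same maximizer.

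First, fix $i$. By Theorem \ref{theorem:int:CA}, we have $a_i^Tx_j<a_i^Tx_i$ for all $j\neq i$. Let $x\in P$ and write $x=\sum_j\lambda_jx_j$ with $\lambda\in\R^N_+$ and $\mathbbm{1}^T\lambda=1$. Then
\[a_i^Tx=\sum_j\lambda_ja_i^Tx_j\le a_i^Tx_i,\]
and equality holds if and only if $\lambda_j=0$ for all $j\neq i$, that is, $\lambda=e_i$ and $x=x_i$. Hence $\argmax_{x\in P}a_i^Tx=\{x_i\}$. This is the same computation as in Lemma \ref{lem:strict:inequality:vertex}, now applied to an arbitrary point of $P$ rather than only to $x_i$.

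Second, we apply the same reasoning to $\bm{y}$, using $P=\co\{\bm{y}\}$. This gives $\argmax_{x\in P}a_i^Tx=\{y_i\}$. Since the two argmax sets are taken over the same set $P$ with the same functional, they coincide, so $x_i=y_i$. As $i$ was arbitrary, $\bm{x}=\bm{y}$, which proves injectivity.

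No step here is a real obstacle. The only point that needs care is making the equality case in the first step airtight, namely that strict inequalities together with a convex combination force all weight onto index $i$. An alternative route would use Theorem \ref{theorem:interior:vertices} to identify $\{\bm{x}\}=\ext P=\{\bm{y}\}$ as sets. That route still has to match the labels, and the argmax argument above is exactly what does the matching, so we use it directly.
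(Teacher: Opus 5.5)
Your proof is correct, but it takes a different route from the paper's. The paper first invokes Theorem \ref{theorem:interior:vertices} to identify the point sets, $\{\bm{x}\}=\ext(\co\{\bm{x}\})=\ext(\co\{\bm{y}\})=\{\bm{y}\}$. It then uses Lemma \ref{lem:strict:inequality:vertex} for pairwise distinctness. Finally it matches labels by contradiction: if $x_i\neq y_i$, then $x_i=y_j$ and $y_i=x_k$ with $j,k\neq i$, and the strict inequalities for both configurations give the cycle $a_i^Ty_j=a_i^Tx_i>a_i^Tx_k=a_i^Ty_i>a_i^Ty_j$.

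You skip the set identification and show directly that $x_i$ is the unique maximizer of $a_i^T(\cdot)$ on $P$. This is exactly statement b) of Theorem \ref{construction:site}, which the paper proves later. In other words, $\pi^\flat_{\bm{a}}\circ\co$ is the identity on $\interior\mc{C}^\flat_{\bm{a}}$, so $\co$ has a left inverse there.

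Your route is shorter and self-contained, and it gives slightly more. In your second step you only need $y_i\in\argmax_{x\in P}a_i^Tx$, not uniqueness. That holds for every $\bm{y}\in\mc{C}^\flat_{\bm{a}}$ by Theorem \ref{construction:site} a) (equivalently Lemma \ref{lem:normal:cone:relationship}). So the same argument shows that an interior $\bm{x}$ and any $\bm{y}\in\mc{C}^\flat_{\bm{a}}$ with $\co\{\bm{x}\}=\co\{\bm{y}\}$ must coincide, which also yields Proposition \ref{prop:interior:andLbdry:dont:mingle}.

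The paper's route instead emphasizes that the parameter points are exactly the vertices of $P$, which is the theme of that section. The price is a separate label-matching step.
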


\begin{proof}
Let $\bm{x},\bm{y}\in\interior\mc{C}^\flat_{\bm{a}}$ be such that
$\co\{\bm{x}\}=\co\{\bm{y}\}$.
By Theorem \ref{theorem:interior:vertices} we have
\[\{\bm{x}\}=\ext(\co\{\bm{x}\})=\ext(\co\{\bm{y}\})=\{\bm{y}\},\]
and by Theorem \ref{theorem:int:CA} and Lemma \ref{lem:strict:inequality:vertex},
we have $x_i\neq x_j$ and $y_i\neq y_j$ for $i\neq j$.
Assume that $x_i\neq y_i$ for some $i$.
Then $x_i=y_j$ for some $j\neq i$ and $y_i=x_k$ for some $k\neq i$,
and we obtain the contradiction
\[a_i^Ty_j=a_i^Tx_i>a_i^Tx_k=a_i^Ty_i>a_i^Ty_j.\]
\end{proof}

Theorem \ref{theorem:interior:vertices} does not extend to $\partial\mc{C}^\flat_{\bm{a}}$.

\begin{example}
Let $a_1=(1,0)^T$, $a_2=(0,1)^T$ and $a_3=(-1,0)^T$, and let
$\bm{x}=(x_1,x_2,x_3)\in\R^{2\times 3}$ be given by
$x_1=a_1$, $x_2=0$ and $x_3=a_3$.
It is easy to check that $\bm{x}\in\mc{C}^\flat_{\bm{a}}$,
but $\co\{\bm{x}\}=[x_1,x_3]$ and $\ext(\co\{\bm{x}\})=\{x_1,x_3\}$.
\end{example}

The following result complements Corollary \ref{cor:co:injective}.

\begin{proposition}\label{prop:interior:andLbdry:dont:mingle}
Let $\bm{x}\in\interior\mc{C}^\flat_{\bm{a}}$ and $\bm{y}\in\partial\mc{C}^\flat_{\bm{a}}$.
Then $\co\{\bm{x}\}\neq\co\{\bm{y}\}$.
\end{proposition}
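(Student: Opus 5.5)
Argue by contradiction: assume $\co\{\bm{x}\}=\co\{\bm{y}\}=:P$ with $\bm{x}\in\interior\mc{C}^\flat_{\bm{a}}$ and $\bm{y}\in\partial\mc{C}^\flat_{\bm{a}}$. Since $\mc{C}^\flat_{\bm{a}}$ is closed by Lemma \ref{lemma:polyhedral:convex:cone}, we have $\bm{y}\in\mc{C}^\flat_{\bm{a}}\setminus\interior\mc{C}^\flat_{\bm{a}}$. By Theorem \ref{theorem:int:CA} this means there are indices $i\neq j$ with $a_i^Ty_j=a_i^Ty_i$. The goal is to contradict this equality using the strict inequalities available for $\bm{x}$.

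First, I show that the maximizers of $a_i^T$ over $P$ are controlled by $\bm{x}$. By Theorem \ref{theorem:int:CA}, $a_i^Tx_k<a_i^Tx_i$ for all $k\neq i$. Any point of $P$ is a convex combination of the $x_k$, so $x_i$ is the \emph{unique} maximizer of $a_i^T$ over $P$: a convex combination attains $a_i^Tx_i$ only if all its weight sits on $x_i$. On the other hand, Lemma \ref{lem:normal:cone:relationship} applied to $\bm{y}$ gives $a_i\in\mc{N}_P(y_i)$, so $y_i\in\argmax_{P}a_i^T$. Hence $y_i=x_i$. The same argument for every index yields $y_k=x_k$ for all $k$, so $\bm{y}=\bm{x}$.

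This contradicts $\bm{y}\in\partial\mc{C}^\flat_{\bm{a}}$, because $\bm{x}$ lies in the open set $\interior\mc{C}^\flat_{\bm{a}}$, which is disjoint from the boundary. Alternatively, one can use the equality $a_i^Ty_j=a_i^Ty_i=a_i^Tx_i$ directly. Since $y_j\in P$ and $x_i$ is the unique maximizer, this forces $y_j=x_i=y_i$, whereas the previous step gives $y_j=x_j\neq x_i$ by Lemma \ref{lem:strict:inequality:vertex}.

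The only point needing care is the uniqueness of the maximizer, which requires the convex-combination argument from the proof of Lemma \ref{lem:strict:inequality:vertex}. This is routine, so I expect no real obstacle. In fact the argument proves a stronger statement: for $\bm{x}\in\interior\mc{C}^\flat_{\bm{a}}$, the only $\bm{y}\in\mc{C}^\flat_{\bm{a}}$ with $\co\{\bm{y}\}=\co\{\bm{x}\}$ is $\bm{y}=\bm{x}$.
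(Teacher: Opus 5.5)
Your proof is correct, and it takes a more direct route than the paper's.

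\textbf{The paper's route.} It first identifies the point sets. From Theorem~\ref{theorem:interior:vertices} and $\ext(\co\{\bm{y}\})\subset\{\bm{y}\}$ it gets $\{\bm{x}\}\subset\{\bm{y}\}$. The count $\#\{\bm{x}\}=N\ge\#\{\bm{y}\}$ then gives $\{\bm{x}\}=\{\bm{y}\}$. Finally it rules out a nontrivial relabelling: if $x_i\neq y_i$, then $x_i=y_j$ and $y_i=x_k$ with $j,k\neq i$, and
\[a_i^Ty_j=a_i^Tx_i>a_i^Tx_k=a_i^Ty_i\]
violates $\bm{y}\in\mc{C}^\flat_{\bm{a}}$.

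\textbf{Your route.} You skip the identification of point sets entirely. Since $y_i$ maximizes $a_i^T$ over all of $P$, and $x_i$ is the unique maximizer by the strict inequalities of Theorem~\ref{theorem:int:CA}, you get $y_i=x_i$ directly. You never need to know that $y_i$ is one of the $x_k$. In the paper's later language, your argument is the one-liner
\[\bm{y}\in\pi^\flat_{\bm{a}}(\co\{\bm{y}\})=\pi^\flat_{\bm{a}}(\co\{\bm{x}\})=\{\bm{x}\},\]
from parts a) and b) of Theorem~\ref{construction:site}. You re-derive these inline, which is necessary since that theorem comes later in the paper.

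\textbf{What each buys.} Your version needs no extreme-point machinery: no Ziegler inclusion, no Theorem~\ref{theorem:interior:vertices}, no cardinality count. As you note, it yields the stronger statement that $\bm{y}=\bm{x}$ whenever $\bm{y}\in\mc{C}^\flat_{\bm{a}}$ and $\co\{\bm{y}\}=\co\{\bm{x}\}$. That statement also contains Corollary~\ref{cor:co:injective}. The paper's proof likewise uses only $\bm{y}\in\mc{C}^\flat_{\bm{a}}$ until its last line, so it implicitly gives the same strengthening. Its route is more combinatorial and vertex-centered, which fits the theme of that section.

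\textbf{Minor remarks.} The equality $a_i^Ty_j=a_i^Ty_i$ you extract at the start is never used in your main line. The ``alternative'' ending is redundant once $\bm{y}=\bm{x}$ is established.
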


\begin{proof}
Assume that $\co\{\bm{x}\}=\co\{\bm{y}\}$.
By Theorem \ref{theorem:interior:vertices} and \cite[Proposition 2.2]{Ziegler}, we have
\[\{\bm{x}\}=\ext(\co\{\bm{x}\})=\ext(\co\{\bm{y}\})\subset\{\bm{y}\}.\]
Theorem \ref{theorem:int:CA} and Lemma \ref{lem:strict:inequality:vertex} give
$\#\{\bm{x}\}=N$, and we have $\#\{\bm{y}\}\le N$,
so $\{\bm{x}\}=\{\bm{y}\}$.
Assume there exists $i$ with $x_i\neq y_i$.
Then there are $j$ with $x_i=y_j$ and $i\neq j$, and $k$ with $x_k=y_i$ and $i\neq k$, and
\[a_i^Ty_j=a_i^Tx_i>a_i^Tx_k=a_i^Ty_i\]
contradicts $\bm{y}\in\mc{C}^\flat_{\bm{a}}$.
Hence we have $\bm{x}=\bm{y}$, which contradicts
$\bm{x}\in\interior\mc{C}^\flat_{\bm{a}}\not\ni\bm{y}$.
All in all, the initial assumption is false, and $\co\{\bm{x}\}\neq\co\{\bm{y}\}$ holds.
\end{proof}

Corollary \ref{cor:co:injective} does not extend to the boundary.

\begin{example}
Let $a_1=(1,0)^T$, $a_2=(0,1)^T$ and $a_3=(-1,0)^T$, and let
$\bm{x}(s)=(x_1(s),x_2(s),x_3(s))\in\R^{2\times 3}$ be given by
$x_1(s)=a_1$, $x_2(s)=(s,0)^T$ and $x_3(s)=a_3$ for $s\in[-1,1]$.
It is easy to check that $\bm{x}(s)\in\mc{C}^\flat_{\bm{a}}$ for $s\in[-1,1]$ and
$\bm{x}(s)\neq\bm{x}(t)$ for $s\neq t$,
but $\co\{\bm{x}(s)\}=[x_1,x_3]$ for all $s\in[-1,1]$.
\end{example}

The following theorem is the analog of Lemma \ref{lem:normal:cone:relationship} for
$\interior\mc{C}^\flat_{\bm{a}}$.
Points in $\interior\mc{C}^\flat_{\bm{a}}$ encode polytopes with full-dimensional
normal cones at every vertex.

\begin{theorem}\label{temporary}
Let $\bm{x}\in\R^{d\times N}$.
Then $\bm{x}\in\interior\mc{C}^\flat_{\bm{a}}$ if and only if $x_i\neq x_j$ for $i\neq j$ and
$a_i\in\interior\mc{N}_{\co\{\bm{x}\}}(x_i)$ for all $i$.
\end{theorem}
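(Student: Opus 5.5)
The plan is to reduce everything to the characterization $\interior\mc{C}^\flat_{\bm{a}}=\{\bm{x}:a_i^Tx_j<a_i^Tx_i,\ i\neq j\}$ from Theorem \ref{theorem:int:CA}. I will also use the elementary description of the normal cone of a polytope in terms of its generating points:
\[\mc{N}_{\co\{\bm{x}\}}(x_i)=\{z\in\R^d:z^T(x_j-x_i)\le0\ \text{for all}\ j\}.\]
This follows exactly as in the proof of Lemma \ref{lem:normal:cone:relationship}. The inclusion ``$\subset$'' holds because each $x_j\in\co\{\bm{x}\}$. For ``$\supset$'', write $x\in\co\{\bm{x}\}$ as a convex combination $x=\sum_j\lambda_jx_j$ and expand $z^T(x-x_i)=\sum_j\lambda_jz^T(x_j-x_i)\le0$. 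With this description, $\mc{N}_{\co\{\bm{x}\}}(x_i)$ is a finite intersection of closed half-spaces.

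For ``$\Rightarrow$'', let $\bm{x}\in\interior\mc{C}^\flat_{\bm{a}}$. Theorem \ref{theorem:int:CA} gives $a_i^T(x_j-x_i)<0$ for all $j\neq i$, and Lemma \ref{lem:strict:inequality:vertex} gives $x_i\neq x_j$ for $i\neq j$. Now fix $i$. The set
\[U:=\{z\in\R^d:z^T(x_j-x_i)<0\ \text{for all}\ j\neq i\}\]
is open, since it is a finite intersection of open half-spaces. It is contained in $\mc{N}_{\co\{\bm{x}\}}(x_i)$; the constraint for $j=i$ is trivially satisfied. Since $a_i\in U$, we get $a_i\in\interior\mc{N}_{\co\{\bm{x}\}}(x_i)$.

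For ``$\Leftarrow$'', assume the $x_i$ are pairwise distinct and $a_i\in\interior\mc{N}_{\co\{\bm{x}\}}(x_i)$ for all $i$. By Lemma \ref{lem:normal:cone:relationship} we already have $\bm{x}\in\mc{C}^\flat_{\bm{a}}$, i.e.\ $a_i^T(x_j-x_i)\le0$. To get strictness, suppose $a_i^T(x_j-x_i)=0$ for some $j\neq i$, and set $v:=x_j-x_i$. Then $v\neq0$ by distinctness. By interiority there is $\eps>0$ with $a_i+\eps v\in\mc{N}_{\co\{\bm{x}\}}(x_i)$. However,
\[(a_i+\eps v)^Tv=\eps\|v\|^2>0,\]
so $(a_i+\eps v)^T(x_j-x_i)>0$, which contradicts the normal cone description. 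Hence all inequalities are strict, and Theorem \ref{theorem:int:CA} yields $\bm{x}\in\interior\mc{C}^\flat_{\bm{a}}$.

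The only real content is the perturbation in the reverse direction. There, distinctness of the points is exactly what is needed to make $v\neq0$; without it, repeated points would give equalities that cannot be detected by the normal cone. The rest is bookkeeping with the finite half-space description of $\mc{N}_{\co\{\bm{x}\}}(x_i)$.
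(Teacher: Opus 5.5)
Your proof is correct and follows the paper's argument: the reverse direction uses the same perturbation $a_i+\eps(x_j-x_i)$, and the forward direction differs only cosmetically. There you get a neighborhood of $a_i$ inside $\mc{N}_{\co\{\bm{x}\}}(x_i)$ from openness of the finite intersection of the open half-spaces $\{z:z^T(x_j-x_i)<0\}$, rather than from the explicit radius $\eps=\min_{j\neq i}a_i^T(x_i-x_j)/\|x_i-x_j\|$ that the paper computes; likewise, first obtaining $\bm{x}\in\mc{C}^\flat_{\bm{a}}$ via Lemma \ref{lem:normal:cone:relationship} and then excluding equality is equivalent to the paper's single case $a_i^Tx_i\le a_i^Tx_j$.
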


\begin{proof}
Let $\bm{x}\in\interior\mc{C}^\flat_{\bm{a}}$ and fix $i$.
By Theorem \ref{theorem:int:CA} and Lemma \ref{lem:strict:inequality:vertex}
we have $a_i^Tx_j<a_i^Tx_i$ and $x_i\neq x_j$ for $i\neq j$.
Then $\eps:=\min_{\{j:i\neq j\}}\frac{a_i^T(x_i-x_j)}{\|x_i-x_j\|}>0$.
Let $a:=a_i+z$ with $z\in B_\eps(0)$, and let $x\in\co\{\bm{x}\}$.
Then there exists $\lambda\in\R^N_+$ with $\mathbbm{1}^T\lambda=1$
and $\sum_{j=1}^N\lambda_jx_j=x$, and we compute
\begin{align*}
a^T(x-x_i)
&=\sum_{j\neq i}\lambda_j(a_i+z)^T(x_j-x_i)\\
&=\sum_{j\neq i}\lambda_j\tfrac{a_i^T(x_j-x_i)}{\|x_j-x_i\|}\|x_j-x_i\|
+\sum_{j\neq i}\lambda_jz^T(x_j-x_i)\\
&\le-\eps\sum_{j=1}^N\lambda_j\|x_i-x_j\|+\|z\|\sum_{j=1}^N\lambda_j\|x_i-x_j\|\le0.
\end{align*}
We conclude that $a\in\mc{N}_{\co\{\bm{x}\}}(x_i)$ for all $a\in B_\eps(a_i)$ and, consequently, that
$a_i\in\interior\mc{N}_{\co\{\bm{x}\}}(x_i)$.

\medskip

Conversely, let $\bm{x}\in\R^{d\times N}$ with $x_i\neq x_j$ for $i\neq j$
and $a_i\in\interior\mc{N}_{\co\{\bm{x}\}}(x_i)$ for all $i$.
Assume there exist 
$i\neq j$ with $a_i^Tx_i\le a_i^Tx_j$.
For any $\eps>0$ let $a(\eps):=a_i+\eps(x_j-x_i)$.
Then $\lim_{\eps\to0}a(\eps)=a_i$ and
\[
a(\eps)^T(x_j-x_i)
=(a_i+\eps(x_j-x_i))^T(x_j-x_i)
\ge\eps\|x_j-x_i\|^2>0
\]
shows that $a_i\notin\interior\mc{N}_{\co\{\bm{x}\}}(x_i)$.
Hence the assumption was wrong, we have $a_i^Tx_j<a_i^Tx_i$ for $i\neq j$,
and in view of Theorem \ref{theorem:int:CA}, we have $\bm{x}\in\interior\mc{C}^\flat_{\bm{a}}$.
\end{proof}

It is tempting to think that for any $\bm{x}\in\interior\mc{C}^\flat_{\bm{a}}$
the corresponding $\co\{\bm{x}\}$ should have full dimension.
This is only true in dimension $d=2$.

\begin{theorem}
The following statements hold.
\begin{itemize}
\item [a)] When $d=2$ and $\#\{\bm{a}\}\ge3$, then $\dim(\co\{\bm{x}\})=2$
for any $\bm{x}\in\interior\mc{C}^\flat_{\bm{a}}$.
\item [b)] For every $d>2$ there exist $\bm{a}\in\R^{d\times N}$ satisfying
the standing assumptions and $\bm{x}\in\interior\mc{C}^\flat_{\bm{a}}$ with
$\dim(\co\{\bm{x}\})\le2$.
\end{itemize}
\end{theorem}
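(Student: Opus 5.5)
For part a) I will argue by contradiction and use Theorem \ref{theorem:interior:vertices}. Let $d=2$, assume $N=\#\{\bm{a}\}\ge3$, and take $\bm{x}\in\interior\mc{C}^\flat_{\bm{a}}$. By Theorem \ref{theorem:interior:vertices} the points $x_1,\ldots,x_N$ are $N\ge3$ distinct extreme points of $\co\{\bm{x}\}$. The set $\co\{\bm{x}\}$ cannot have dimension $0$, since it would then be a single point. If it had dimension $1$, it would be a segment, and a segment has exactly two extreme points. This would contradict $\#\ext(\co\{\bm{x}\})=N\ge3$. Hence $\dim(\co\{\bm{x}\})=2$.

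The point of this argument is that every $x_i$ is a vertex, so the only property of the plane that enters is that a lower-dimensional convex set there has at most two extreme points.

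For part b) I will construct an explicit example in which all $x_i$ lie in a $2$-plane and all $a_i$ stay strictly separated from one another. The extra directions available when $d>2$ make this possible. The natural first choice is to take points in convex position in the plane $E=\operatorname{span}\{e_1,e_2\}$, say $x_i=(\cos\theta_i,\sin\theta_i,0,\ldots,0)^T$ with distinct angles $\theta_1,\ldots,\theta_N$. With only $N=2$ points, $\co\{\bm{x}\}$ is even a segment, but I will aim for a construction that works for general $N$, or at least for some admissible $N$, since the statement only requires existence.

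Next I choose the unit vectors $a_i=\alpha u_i+\beta w_i$, where $u_i=x_i$ is the in-plane radial direction, $w_i\in E^\perp$, and $\alpha,\beta>0$ satisfy $\alpha^2+\beta^2=1$. Because $x_j\in E$, we get $a_i^Tx_j=\alpha\,u_i^Tx_j=\alpha\cos(\theta_i-\theta_j)$, which is strictly less than $\alpha=a_i^Tx_i$ for $j\neq i$. Theorem \ref{theorem:int:CA} then gives $\bm{x}\in\interior\mc{C}^\flat_{\bm{a}}$, while $\dim\co\{\bm{x}\}\le2$. The $a_i$ are unit vectors and are pairwise distinct because the $u_i$ are distinct, so all standing assumptions hold. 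It even suffices to take $w_i=e_3$ for every $i$.

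The step that needs the most care is interpreting b) correctly. Since $a_i^Tx_j$ depends only on the in-plane component of $a_i$, the out-of-plane components play no role in the inequalities. Taking $a_i=x_i$ exactly, with all points in a plane, already works, so the construction is nearly trivial. I will therefore check that nothing in the standing assumptions, such as a covering-radius condition, forces the $a_i$ to spread over $S^{d-1}$. The example still fits that spirit, because adding further directions $a_{N+1},\ldots$ with large $e_3$-components is harmless if the corresponding points $x_{N+k}$ are placed in $E$ appropriately. If a spreading requirement were intended, I would try to refine the example in that direction, but as stated b) only requires existence.
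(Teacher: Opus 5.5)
Your proposal is correct and follows the paper's proof. In a) you use the same fact that the $N\ge3$ distinct extreme points from Theorem \ref{theorem:interior:vertices} cannot lie in a point or a segment. In b) you use the same construction: points $x_i$ on a unit circle in a $2$-plane, and directions $a_i$ whose in-plane component is a positive multiple of $x_i$, so that $a_i^Tx_j$ becomes a scaled $\cos(\theta_i-\theta_j)$. The paper's spherical coordinates let the positive factor $\prod_{j=1}^{d-2}\sin[\phi_i]_j$ and the out-of-plane part vary with $i$, so $\bm{a}$ can spread over the sphere; your common $\alpha$ with $w_i=e_3$ is a special case of this and is enough for the existence claim.
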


\begin{proof}
We use Theorem \ref{theorem:int:CA} throughout the proof.

\medskip

a) By Theorem \ref{theorem:interior:vertices} we have $\#\ext(\co\{\bm{x}\})\ge3$,
and three distinct extreme points cannot be collinear.

\medskip

b) Let $d>2$, let $N>0$, let $(\phi_j)_{j=1}^N\subset(0,\pi)^{d-2}\times(0,2\pi)$ with
\[0<[\phi_1]_{d-1}<[\phi_2]_{d-1}<\ldots[\phi_N]_{d-1}<2\pi,\]
and for $i\in\{1,\ldots,N\}$ consider the vectors
\begin{align*}
a_i&:=(\cos[\phi_i]_1,\sin[\phi_i]_1\cos[\phi_i]_2,\ldots,
(\prod_{j=1}^{d-2}\sin[\phi_i]_j)\cos[\phi_i]_{d-1},\prod_{j=1}^{d-1}\sin[\phi_i]_j),\\
x_i&:=(0,\ldots,0,\cos[\phi_i]_{d-1},\sin[\phi_i]_{d-1}).
\end{align*}
Then $\bm{a}:=(a_1,\ldots,a_N)$ satisfies the standing assumptions, and since for $i\neq j$
the strict Cauchy-Schwarz inequality gives
\begin{align*}
a_i^Tx_j
&=\big(\cos[\phi_i]_{d-1}\cos[\phi_j]_{d-1}+\sin[\phi_i]_{d-1}\sin[\phi_j]_{d-1}\big)
\prod_{j=1}^{d-2}\sin[\phi_i]_j
<\prod_{j=1}^{d-2}\sin[\phi_i]_j\\
&=\big(\cos[\phi_i]_{d-1}\cos[\phi_i]_{d-1}+\sin[\phi_i]_{d-1}\sin[\phi_i]_{d-1}\big)
\prod_{j=1}^{d-2}\sin[\phi_i]_j
=a_i^Tx_i,
\end{align*}
we have $\bm{x}:=(x_1,\ldots,x_N)\in\interior\mc{C}^\flat_{\bm{a}}$
with $\co\{\bm{x}\}\subset\{0\}\times\ldots\times\{0\}\times\R^2$.
\end{proof}

While part b) looks like a negative result, it is indeed very positive:
It means that we can underapproximate at least some lower-dimensional convex bodies
with $\co\{\bm{x}\}$ where $\bm{x}\in\interior\mc{C}^\flat_{\bm{a}}$.

\section{Redundancy in the constraints}
\label{sec:redundancy}

In the following, when we state that a constraint is redundant,
we mean that it is individually redundant with respect to all other constraints.
We do not mean to imply that all individually redundant constraints
are simultaneously redundant.

\medskip

The following lemma states that whenever $a_k\in\cone(a_i,a_j)\setminus\{0\}$,
then two of the constraints defining $\mc{C}_{\bm{a}}^\flat$ are redundant.

\begin{lemma}
\label{lem:redundancy:positive:combination}
Let $i,j,k\in\{1,\ldots,N\}$ be pairwise distinct and assume that
\begin{equation}\label{positive:combination}
a_k=\alpha a_i+\beta a_j
\end{equation}
for some $\alpha,\beta>0$.
Let $\bm{x}\in\R^{d\times N}$ satisfy
\begin{equation}\label{satisfied:inequalities}
a_i^Tx_k\le a_i^Tx_i,\quad a_j^Tx_k\le a_j^Tx_j,\quad
a_k^Tx_i\le a_k^Tx_k\quad\text{and}\quad a_k^Tx_j\le a_k^Tx_k.
\end{equation}
Then we also have
\[a_i^Tx_j\le a_i^Tx_i\quad\text{and}\quad a_j^Tx_i\le a_j^Tx_j.\]
\end{lemma}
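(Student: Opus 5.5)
The plan is a direct linear-combination argument. The only inequality that ties $x_j$ to the functionals $a_i$ and $a_j$ is $a_k^Tx_j\le a_k^Tx_k$. I will expand $a_k$ via \eqref{positive:combination} in this inequality, bound the right-hand side using the two constraints that compare $x_k$ with $x_i$ and with $x_j$, and then cancel terms.

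Concretely, for the first claim I substitute \eqref{positive:combination} into $a_k^Tx_j\le a_k^Tx_k$. This gives
\[\alpha a_i^Tx_j+\beta a_j^Tx_j\le\alpha a_i^Tx_k+\beta a_j^Tx_k.\]
Next I apply the first two inequalities in \eqref{satisfied:inequalities}, namely $a_i^Tx_k\le a_i^Tx_i$ and $a_j^Tx_k\le a_j^Tx_j$. Since $\alpha,\beta>0$, these bound the right-hand side from above by $\alpha a_i^Tx_i+\beta a_j^Tx_j$. The term $\beta a_j^Tx_j$ now appears on both sides and cancels, leaving $\alpha a_i^Tx_j\le\alpha a_i^Tx_i$. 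Dividing by $\alpha>0$ yields $a_i^Tx_j\le a_i^Tx_i$.

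The second claim follows by the same argument with the roles of $i$ and $j$ (and of $\alpha$ and $\beta$) exchanged, now starting from $a_k^Tx_i\le a_k^Tx_k$. Expanding gives $\alpha a_i^Tx_i+\beta a_j^Tx_i\le\alpha a_i^Tx_k+\beta a_j^Tx_k$. The right-hand side is again at most $\alpha a_i^Tx_i+\beta a_j^Tx_j$. Cancelling $\alpha a_i^Tx_i$ and dividing by $\beta>0$ gives $a_j^Tx_i\le a_j^Tx_j$.

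I do not expect a genuine obstacle. The only point requiring care is to use the strict positivity of $\alpha$ and $\beta$ twice: once to preserve the direction of the inequalities when scaling the constraints, and once to divide at the end.
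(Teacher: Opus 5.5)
Your proof is correct and is essentially the paper's argument. The paper writes
$a_i^Tx_j-a_i^Tx_i=(a_i^Tx_k-a_i^Tx_i)+\frac{\beta}{\alpha}(a_j^Tx_k-a_j^Tx_j)+\frac{1}{\alpha}(a_k^Tx_j-a_k^Tx_k)$,
which is a positive combination of nonpositive differences and is exactly your chain of inequalities divided by $\alpha$; the second inequality is handled symmetrically, as you do.
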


\begin{proof}
Equation \eqref{positive:combination} yields
\[a_k^Tx_j-a_k^Tx_k=\alpha(a_i^Tx_j-a_i^Tx_k)+\beta(a_j^Tx_j-a_j^Tx_k)\]
and hence
\[a_i^Tx_j-a_i^Tx_k=\frac1\alpha(a_k^Tx_j-a_k^Tx_k)+\frac\beta\alpha(a_j^Tx_k-a_j^Tx_j).\]
Adding $a_i^Tx_k-a_i^Tx_i$ and using \eqref{satisfied:inequalities} gives
\[a_i^Tx_j-a_i^Tx_i
=(a_i^Tx_k-a_i^Tx_i)+\frac\beta\alpha(a_j^Tx_k-a_j^Tx_j)+\frac1\alpha(a_k^Tx_j-a_k^Tx_k)
\le0,\]
which proves the first inequality.
The second inequality is proved in the same way.
\end{proof}

Constraints relating antipodal directions and their corresponding points
are redundant as well.

\begin{lemma}\label{lem:redundancy:antipodal}
Let $i,j,k\in\{1,\ldots,N\}$ be pairwise distinct and suppose that $a_j=-a_i$.
Let $\bm x\in\R^{d\times N}$ satisfy
\[a_i^Tx_k\le a_i^Tx_i\quad\text{and}\quad a_j^Tx_k\le a_j^Tx_j .\]
Then we also have
\[a_i^Tx_j\le a_i^Tx_i\quad\text{and}\quad a_j^Tx_i\le a_j^Tx_j .\]
\end{lemma}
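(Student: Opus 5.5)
Since $a_j=-a_i$, each of the two target inequalities is just the sum of the two hypotheses, so no case analysis is needed.

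\emph{Step 1 (eliminate $x_k$).} Rewrite the second hypothesis $a_j^Tx_k\le a_j^Tx_j$ with $a_j=-a_i$ as
\[-a_i^Tx_k\le -a_i^Tx_j,\qquad\text{that is,}\qquad a_i^Tx_j\le a_i^Tx_k .\]
Together with the first hypothesis $a_i^Tx_k\le a_i^Tx_i$, this gives the chain
\[a_i^Tx_j\le a_i^Tx_k\le a_i^Tx_i,\]
which is the first claimed inequality. Geometrically, $x_k$ lies in the slab between the two parallel supporting hyperplanes through $x_i$ and $x_j$, which forces $x_i$ and $x_j$ to be ordered along $a_i$.

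\emph{Step 2 (second inequality).} Multiply $a_i^Tx_j\le a_i^Tx_i$ by $-1$ and substitute $a_j=-a_i$ again to get $a_j^Tx_i\le a_j^Tx_j$.

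\emph{Where care is needed.} There is no real obstacle, since the argument is a one-line addition of inequalities. The only thing to check is the sign bookkeeping when substituting $a_j=-a_i$. The role of the third index $k$ is that it supplies an intermediate point, much as in Lemma~\ref{lem:redundancy:positive:combination}. Formally, the hypotheses show that the constraint pair $(i,j)$ is implied by the pairs $(i,k)$ and $(j,k)$, which is the sense of redundancy fixed at the start of Section~\ref{sec:redundancy}.
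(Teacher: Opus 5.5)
Your proposal is correct and follows the paper's proof: the paper proves the first inequality via the same chain $a_i^Tx_j=-a_j^Tx_j\le-a_j^Tx_k=a_i^Tx_k\le a_i^Tx_i$. The paper then says the second inequality is shown ``in the same way,'' while you get it by negating the first. That is a harmless shortcut, since under $a_j=-a_i$ the two target inequalities are literally equivalent.
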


\begin{proof}
Using the assumptions we directly estimate
\[a_i^Tx_j=-a_j^Tx_j\le-a_j^Tx_k=a_i^Tx_k\le a_i^Tx_i.\]
The second statement can be shown in the same way.
\end{proof}

Both of the mechanisms described above occur frequently in the standard discretization
given by mapping a regular grid to the sphere by means of polar coordinates.

\section{Results on projectors}
\label{sec:projectors}

We examine the projectors $\pi^\flat_{\bm{a}}$ and $\Pi^\flat_{\bm{a}}$
from \eqref{def:pi} and \eqref{def:Pi}.

\begin{theorem}\label{thm:basic:properties:projectors}
The mappings $\pi^\flat_{\bm{a}}:\mc{K}^d\rightrightarrows\mc{C}^\flat_{\bm{a}}$
and $\Pi^\flat_{\bm{a}}:\mc{K}^d\rightrightarrows\mc{P}^\flat_{\bm{a}}$ are well-defined.
Furthermore 
$\pi^\flat_{\bm{a}}:\mc{K}^d\to\mc{K}(\R^{d\times N})$
and $\Pi^\flat_{\bm{a}}:\mc{K}^d\to\mc{C}(\mc{K}^d)$,
where $\mc{K}(\R^{d\times N})$ denotes the space of all nonempty convex and compact subsets
of $\R^{d\times N}$ and $\mc{C}(\mc{K}^d)$ is the space of all
nonempty and compact subsets of $\mc{K}^d$.
\end{theorem}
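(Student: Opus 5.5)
The plan is to reduce everything to the structure of the faces $F_i(K):=\argmax_{x\in K}a_i^Tx$. For $K\in\mc{K}^d$ and each $i$, the map $x\mapsto a_i^Tx$ is continuous on the nonempty compact set $K$, so $F_i(K)$ is nonempty and compact. It is also convex: it equals $K\cap\{x:a_i^Tx=h_K(a_i)\}$, with $h_K(a_i):=\max_{x\in K}a_i^Tx$, an intersection of two convex sets. Since
\[\pi^\flat_{\bm{a}}(K)=F_1(K)\times\cdots\times F_N(K)\subset\R^{d\times N},\]
this set is a finite product of nonempty convex compact sets. Hence it lies in $\mc{K}(\R^{d\times N})$.

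It remains to check that $\pi^\flat_{\bm{a}}(K)\subset\mc{C}^\flat_{\bm{a}}$. If $\bm{x}\in\pi^\flat_{\bm{a}}(K)$, then for $i\neq j$ we have $x_j\in K$ and $x_i$ maximizes $a_i^T\cdot$ over $K$, so $a_i^Tx_j\le a_i^Tx_i$. This is exactly the defining inequality of $\mc{C}^\flat_{\bm{a}}$. Consequently $\co\{\bm{x}\}\in\mc{P}^\flat_{\bm{a}}$ for every such $\bm{x}$, so both $\pi^\flat_{\bm{a}}$ and $\Pi^\flat_{\bm{a}}$ are well defined with nonempty values in the claimed target spaces.

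For $\Pi^\flat_{\bm{a}}(K)$, note that it is the image of $\pi^\flat_{\bm{a}}(K)$ under the map $\co$. By Corollary \ref{cor:embedding:Lipschitz} this map is $1$-Lipschitz, hence continuous, from $(\mc{C}^\flat_{\bm{a}},\dist_{\mc{C}^\flat_{\bm{a}}})$ to $(\mc{K}^d,\dist_H)$. The metric $\dist_{\mc{C}^\flat_{\bm{a}}}$ is a max-norm-type metric on $\R^{d\times N}$, equivalent to the Euclidean one, so the compact set $\pi^\flat_{\bm{a}}(K)$ is also compact for $\dist_{\mc{C}^\flat_{\bm{a}}}$. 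The continuous image of a compact set is compact, so $\Pi^\flat_{\bm{a}}(K)$ is a nonempty compact subset of $\mc{K}^d$, i.e.\ an element of $\mc{C}(\mc{K}^d)$.

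I do not expect a genuine obstacle. The only point needing care is the equivalence of topologies in the compactness transfer, and noting that convexity of $\Pi^\flat_{\bm{a}}(K)$ is not claimed (indeed Example \ref{ex:minkowski} suggests it would fail).
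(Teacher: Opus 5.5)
Your proposal is correct and follows essentially the same route as the paper: $\pi^\flat_{\bm{a}}(K)$ is the Cartesian product of the nonempty convex compact faces $\argmax_{x\in K}a_i^Tx$, and $\Pi^\flat_{\bm{a}}(K)$ is compact because it is the image of this compact set under the $1$-Lipschitz map $\co$ from Corollary \ref{cor:embedding:Lipschitz}. You additionally spell out the inclusion $\pi^\flat_{\bm{a}}(K)\subset\mc{C}^\flat_{\bm{a}}$ and the equivalence of $\dist_{\mc{C}^\flat_{\bm{a}}}$ with the Euclidean metric, which the paper leaves implicit.
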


\begin{proof}
Let $K\in\mc{K}^d$.
By construction we have $\pi^\flat_{\bm{a}}(K)\subset\mc{C}^\flat_{\bm{a}}$.
Since $\pi^\flat_{\bm{a}}(K)$ is the Cartesian product of the nonempty, convex
and compact faces $\argmax_{x\in K}a_i^Tx$ of $K$, the set $\pi^\flat_{\bm{a}}(K)$
has those same properties.
It follows that $\Pi^\flat_{\bm{a}}(K)\neq\emptyset$,
and by Corollary \ref{cor:embedding:Lipschitz} the set $\Pi^\flat_{\bm{a}}(K)$
is the continuous image of the compact set $\pi^\flat_{\bm{a}}(K)$ and hence compact itself.
\end{proof}

The projector $\Pi^\flat_{\bm{a}}$ maps sets back into themselves.

\begin{lemma}\label{lemma:subset}
For every $K\in\mc{K}^d$ and all $P\in\Pi^\flat_{\bm{a}}(K)$ we have $P\subset K$.
\end{lemma}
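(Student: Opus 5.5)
The claim follows directly from the definitions together with the convexity of $K$; no earlier result beyond the definition of $\Pi^\flat_{\bm{a}}$ in \eqref{def:Pi} is needed. Fix $K\in\mc{K}^d$ and $P\in\Pi^\flat_{\bm{a}}(K)$. By \eqref{def:Pi} there exists $\bm{x}\in\pi^\flat_{\bm{a}}(K)$ with $P=\co\{\bm{x}\}$. By \eqref{def:pi}, each $x_i$ lies in $\argmax_{x\in K}a_i^Tx$, which is a subset of $K$. Hence $\{\bm{x}\}\subset K$.

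Since $K$ is convex, it contains the convex hull of any of its subsets. Concretely, every $x\in\co\{\bm{x}\}$ can be written as $x=\sum_{j=1}^N\lambda_jx_j$ with $\lambda\in\R^N_+$ and $\mathbbm{1}^T\lambda=1$, and such a convex combination of points of $K$ lies in $K$. Therefore $P=\co\{\bm{x}\}\subset K$, and since $P$ was arbitrary, the statement holds for all elements of $\Pi^\flat_{\bm{a}}(K)$.

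There is no real obstacle. The only point to check is that the argmax sets are nonempty, so that $\pi^\flat_{\bm{a}}(K)$ and hence $\Pi^\flat_{\bm{a}}(K)$ contain elements at all. This is already guaranteed by Theorem \ref{thm:basic:properties:projectors}, and in any case the inclusion would hold vacuously otherwise.
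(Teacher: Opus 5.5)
Your proposal is correct and follows essentially the same route as the paper's proof. The paper likewise notes that every $x_i$ of some $\bm{x}\in\pi^\flat_{\bm{a}}(K)$ lies in $K$ by definition, and then concludes $\co\{\bm{x}\}\subset K$ from the convexity of $K$.
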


\begin{proof}
For any $\bm{x}\in\pi_{\bm{a}}^\flat(K)$, by definition $x_i\in K$ for all $i$.
Since $K\in\mc{K}^d$, we have $\co\{\bm{x}\}\subset K$.
\end{proof}

\begin{theorem}\label{construction:site}
The mappings
$\pi^\flat_{\bm{a}}:\mc{K}^d\rightrightarrows\mc{C}^\flat_{\bm{a}}$
and
$\Pi^\flat_{\bm{a}}:\mc{K}^d\rightrightarrows\mc{P}^\flat_{\bm{a}}$
are surjective, and the following statements hold:
\begin{itemize}
\item [a)] We have $\bm{x}\in\pi^\flat_{\bm{a}}(\co\{\bm{x}\})$ for all $\bm{x}\in\mc{C}^\flat_{\bm{a}}$.
\item [b)] We have $\bm{x}=\pi^\flat_{\bm{a}}(\co\{\bm{x}\})$ for all $\bm{x}\in\interior\mc{C}^\flat_{\bm{a}}$.
\item [c)] We have $P\in\Pi^\flat_{\bm{a}}(P)$ for all $P\in\mc{P}^\flat_{\bm{a}}$.
\item [d)] We have $P=\Pi^\flat_{\bm{a}}(P)$ for all $P\in\co(\interior\mc{C}^\flat_{\bm{a}})$.
\end{itemize}
 \end{theorem}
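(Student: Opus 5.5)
The plan is to prove a) first and derive everything else from it.

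For a), fix $\bm{x}\in\mc{C}^\flat_{\bm{a}}$. By Lemma \ref{lem:normal:cone:relationship}, $a_i\in\mc{N}_{\co\{\bm{x}\}}(x_i)$ for every $i$. By the definition of the normal cone this means $a_i^T(y-x_i)\le0$ for all $y\in\co\{\bm{x}\}$. Since $x_i\in\co\{\bm{x}\}$, we get $x_i\in\argmax_{x\in\co\{\bm{x}\}}a_i^Tx$ for all $i$, and hence $\bm{x}\in\pi^\flat_{\bm{a}}(\co\{\bm{x}\})$ by \eqref{def:pi}. Surjectivity of $\pi^\flat_{\bm{a}}$ onto $\mc{C}^\flat_{\bm{a}}$ follows at once, since every $\bm{x}$ lies in the image of $K=\co\{\bm{x}\}\in\mc{K}^d$.

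Part c) follows from a). Write $P=\co\{\bm{x}\}$ with $\bm{x}\in\mc{C}^\flat_{\bm{a}}$; then $P=\co\{\bm{x}\}\in\Pi^\flat_{\bm{a}}(P)$ by \eqref{def:Pi}. This also gives surjectivity of $\Pi^\flat_{\bm{a}}$.

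For b), let $\bm{x}\in\interior\mc{C}^\flat_{\bm{a}}$ and take any $\bm{y}\in\pi^\flat_{\bm{a}}(\co\{\bm{x}\})$. We must show $\bm{y}=\bm{x}$, i.e.\ that each face $\argmax_{x\in\co\{\bm{x}\}}a_i^Tx$ is the singleton $\{x_i\}$.
\begin{itemize}
\item Take $y$ in this face and write $y=\sum_j\lambda_jx_j$ as a convex combination.
\item Theorem \ref{theorem:int:CA} gives $a_i^Tx_j<a_i^Tx_i$ for $j\neq i$.
\item Therefore $a_i^Ty=\sum_j\lambda_ja_i^Tx_j\le a_i^Tx_i$, with equality only if $\lambda_j=0$ for all $j\neq i$.
\item Since $y$ is a maximizer and $x_i\in\co\{\bm{x}\}$, equality must hold. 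So $\lambda=e_i$ and $y=x_i$.
\end{itemize}
This is essentially the argument of Lemma \ref{lem:strict:inequality:vertex}, and it shows that $\pi^\flat_{\bm{a}}(\co\{\bm{x}\})$ is the singleton $\{\bm{x}\}$.

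Part d) follows by applying $\co$ to b). For $P=\co\{\bm{x}\}$ with $\bm{x}\in\interior\mc{C}^\flat_{\bm{a}}$, we get $\Pi^\flat_{\bm{a}}(P)=\{\co\{\bm{y}\}:\bm{y}\in\{\bm{x}\}\}=\{P\}$.

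None of the steps is a real obstacle. The only point needing care is b), where the strict inequalities from Theorem \ref{theorem:int:CA} must force the argmax faces to be singletons. I would present b) through the convex-combination computation above rather than relying on extremality alone, because extremality of $x_i$ by itself does not imply that the $a_i$-face is $\{x_i\}$.
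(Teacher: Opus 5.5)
Your proposal is correct and follows essentially the same route as the paper. The paper proves a) by the same convex-combination estimate that underlies Lemma \ref{lem:normal:cone:relationship} (which you cite instead of repeating), proves b) by the same strict-inequality convex-combination argument based on Theorem \ref{theorem:int:CA}, and obtains c), d) and both surjectivity claims by applying $\co$ to a) and b).
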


\begin{proof}
Let $\bm{x}\in\mc{C}^\flat_{\bm{a}}$. Then $\co\{\bm{x}\}\in\mc{K}^d$ and
$a_i^Tx_j\le a_i^Tx_i$ for all $i$ and $j$.
Let $x\in\co\{\bm{x}\}$.
Then there exists $\lambda\in\R^N_+$
with $\mathbbm{1}^T\lambda=1$ and $\sum_{j=1}^N\lambda_jx_j=x$.
For every $i$ it follows that
\begin{equation*}
a_i^Tx=\sum_{j=1}^N\lambda_ja_i^Tx_j\le\sum_{j=1}^N\lambda_ja_i^Tx_i=a_i^Tx_i.
\end{equation*}
Since $x\in\co\{\bm{x}\}$ was arbitrary, this implies $x_i\in\argmax_{x\in\co\{\bm{x}\}}a_i^Tx$.
All in all, we have shown a), and it follows that
$\pi^\flat_{\bm{a}}:\mc{K}^d\rightrightarrows\mc{C}^\flat_{\bm{a}}$ is surjective.

\medskip

Let $\bm{x}\in\interior\mc{C}^\flat_{\bm{a}}$, fix $i$, and let $x\in\co\{\bm{x}\}$
with $x\neq x_i$.
Then there exist $\lambda\in\R^N_+$ and $k\neq i$
with $\mathbbm{1}^T\lambda=1$, $\lambda_k\neq 0$ and $\sum_{j=1}^N\lambda_jx_j=x$.
By Theorem \ref{theorem:int:CA} we have $a_i^Tx_j<a_i^Tx_i$, and it follows that
\begin{equation*}
a_i^Tx=\sum_{j=1}^N\lambda_ja_i^Tx_j<\sum_{j=1}^N\lambda_ja_i^Tx_i=a_i^Tx_i.
\end{equation*}
Hence $x_i=\argmax_{x\in\co\{\bm{x}\}}a_i^Tx$.
Since $i$ was arbitrary, statement b) holds.

\medskip

Let $P\in\mc{P}^\flat_{\bm{a}}$.
Then there exists $\bm{x}\in\mc{C}^\flat_{\bm{a}}$ with $P=\co\{\bm{x}\}$.
By a) we have  $\bm{x}\in\pi^\flat_{\bm{a}}(\co\{\bm{x}\})$,
and applying $\co$ to both sides yields
\[P=\co\{\bm{x}\}\in\Pi^\flat_{\bm{a}}(\co\{\bm{x}\})=\Pi^\flat_{\bm{a}}(P),\]
which is c).
In particular, the map $\Pi^\flat_{\bm{a}}:\mc{K}^d\rightrightarrows\mc{P}^\flat_{\bm{a}}$ is surjective.
Statement d) follows in the same way from b).
\end{proof}

The map $\Pi^\flat_{\bm{a}}$ is not a minimal distance projector w.r.t.\ $\dist_H$.

\begin{figure}
\centering
\begin{tabular}{lcr}

\begin{tikzpicture}[scale=1]
  \useasboundingbox (-1.75,-1.3) rectangle (1.75,1.3);
  \draw[thick] (0,0) circle (1);

  \coordinate (a1) at (1,0);
  \coordinate (a2) at (0,1);
  \coordinate (a3) at (-1,0);

  \fill (a1) circle (1pt);
  \node [right] at (a1) {$a_1$};
  \fill (a2) circle (1pt);
  \node [above] at (a2) {$a_2$};
  \fill (a3) circle (1pt);
  \node [left] at (a3) {$a_3$};

  \fill[blue!20] (a1) -- (a2) -- (a3) -- cycle;
  \draw[thick] (a1) -- (a2) -- (a3) -- cycle;

  \coordinate (south) at (0,-1);
  \fill (south) circle (1pt);
  \node[below] at (south) {$x$};

  \draw[red, thick] (south) -- (0,0);
  \node[red] at (0.12,-0.55) {$\scriptstyle 1$};
\end{tikzpicture}

&

\begin{tikzpicture}[scale=1]
  \useasboundingbox (-1.75,-1.3) rectangle (1.75,1.3);
  \fill (0,0) circle (1pt);
  \node[below] at (0,0) {$0$};

  \coordinate (a1) at (1,0);
  \coordinate (a2) at (0,1);
  \coordinate (a3) at (-1,0);

  \draw[thick,->] (0,0) -- (a1);
  \draw[thick,->] (0,0) -- (a2);
  \draw[thick,->] (0,0) -- (a3);

  \node[right] at (a1) {$a_1$};
  \node[above] at (a2) {$a_2$};
  \node[left] at (a3) {$a_3$};
\end{tikzpicture}

&

\begin{tikzpicture}[scale=1]
  \useasboundingbox (-1.75,-1.3) rectangle (1.75,1.3);
  \draw[thick] (0,0) circle (1);

  \coordinate (x1) at ({sqrt(3)/2}, {-1/2});
  \coordinate (x2) at (0,1);
  \coordinate (x3) at ({-sqrt(3)/2}, {-1/2});

  \fill (x1) circle (1pt);
  \node [below right] at (x1) {$x_1$};
  \fill (x2) circle (1pt);
  \node [above] at (x2) {$x_2$};
  \fill (x3) circle (1pt);
  \node [below left] at (x3) {$x_3$};

  \fill[blue!20] (x1) -- (x2) -- (x3) -- cycle;
  \draw[thick] (x1) -- (x2) -- (x3) -- cycle;

  \coordinate (south) at (0,-1);
  \fill (south) circle (1pt);
  \node[below] at (south) {$x$};

  \draw[red, thick] (south) -- (0,-0.5);
  \node[red] at (0.25,-0.75) {$\scriptstyle 1/2$};
\end{tikzpicture}

\end{tabular}
\caption{For $K=B_1(0)$ the projection $\Pi^\flat_{\bm{a}}(K)$ (left)
induced by $\bm{a}$ (middle) is not the 
closest set (right)
in $\mc{P}^\flat_{\bm{a}}$ to $K$.
This illustrates Example \ref{counterexample:hausdorff:projector}.}
\end{figure}

\begin{example}\label{counterexample:hausdorff:projector}
Let $d=2$ and consider
\[a_1=(1,0)^T,\quad a_2=(0,1)^T,\quad a_3=(-1,0)^T.\]
For $K:=B_1(0)$ we have $\pi^\flat_{\bm{a}}(K)=(a_1,a_2,a_3)$, and with $x:=(0,-1)^T$ we find
\[\dist_H(K,\Pi^\flat_{\bm{a}}(K))=\dist(x,\Pi^\flat_{\bm{a}}(K))=1.\]
But it is easy to check that the points
\[x_1=(\tfrac{\sqrt{3}}{2},-\tfrac12)^T,\
x_2=(0,1)^T,\
x_3=(-\tfrac{\sqrt{3}}{2},-\tfrac12)^T\]
satisfy $(x_1,x_2,x_3)\in\mc{C}^\flat_{\bm{a}}$.
Hence $P:=\co\{x_1,x_2,x_3\}\in\mc{P}^\flat_{\bm{a}}$ with
\[\dist_H(K,P)=\dist(x,P)=\tfrac12,\]
and $\Pi^\flat_{\bm{a}}(K)\notin\argmin_{P'\in\mc{P}^\flat_{\bm{a}}}\dist_H(K,P')$.
\end{example}

\begin{figure}
\centering
\def\epsfig{0.6}

\begin{tabular}{ccc}

\begin{tikzpicture}[scale=1.2]
\useasboundingbox (-1.8,-.8) rectangle (1.8,1.1);

\coordinate (A1) at (1,0);
\coordinate (A2) at (-1,0);
\coordinate (K1) at (0,-\epsfig);
\coordinate (K2) at (0,\epsfig);

\fill[blue!20]
  (-1,\epsfig) -- (1,\epsfig)
  arc (90:-90:\epsfig)
  -- (-1,-\epsfig)
  arc (-90:-270:\epsfig) -- cycle;
\draw[thick]
  (-1,\epsfig) -- (1,\epsfig)
  arc (90:-90:\epsfig)
  -- (-1,-\epsfig)
  arc (-90:-270:\epsfig) -- cycle;

\draw[dashed] (A2)--(A1);
\fill (A1) circle (2pt) node[below] {$a_1$};
\fill (A2) circle (2pt) node[below] {$a_2$};
\fill (K2) circle (2pt) node[above] {$K_2$};
\node[below] at (K1) {$K_1$};

\end{tikzpicture}

&

\begin{tikzpicture}[scale=1.2]
\useasboundingbox (-1.5,-.8) rectangle (1.7,1.1);

\coordinate (O) at (0,0);

\fill (O) circle (2pt);
\draw[->,thick] (O)--(1,0) node[right] {$a_1$};
\draw[->,thick] (O)--(-1,0) node[left] {$a_2$};

\end{tikzpicture}

&

\begin{tikzpicture}[scale=1.2]
\useasboundingbox (-1.8,-.8) rectangle (1.8,1.1);

\coordinate (A1) at (1,0);
\coordinate (A2) at (-1,0);
\coordinate (X1) at ({1+\epsfig},0);
\coordinate (X2) at ({-1-\epsfig},0);
\coordinate (Y) at (0,\epsfig);
\coordinate (K1) at (0,-\epsfig);

\draw[dashed]
  (-1,\epsfig) -- (1,\epsfig)
  arc (90:-90:\epsfig)
  -- (-1,-\epsfig)
  arc (-90:-270:\epsfig) -- cycle;

\fill (A1) circle (2pt);
\fill (A2) circle (2pt);
\node[above] at (A1) {$a_1$};
\node[above] at (A2) {$a_2$};

\draw[very thick,blue!20] (X2)--(X1);
\draw[thick] (X2)--(X1);
\fill (X1) circle (2pt) node[below right] {$x_1$};
\fill (X2) circle (2pt) node[below left] {$x_2$};

\fill (Y) circle (2pt);
\node[above] at (Y) {$\co\{\pi^\flat_{\bm a}(K_2)\}$};
\node[below] at (0,0) {$\co\{\pi^\flat_{\bm a}(K_1)\}$};

\end{tikzpicture}
\end{tabular}
\caption{Situation from Example \ref{ex:not:nested}.
We have $K_2\subset K_1$ (shaded region in left sketch),
but $\co\{\pi^\flat_{\bm{a}}(K_2)\}$ (isolated point in right sketch)
is not a subset of $\co\{\pi^\flat_{\bm{a}}(K_1)\}$ (solid black line in right sketch).}
\end{figure}

The projector $\Pi^\flat_{\bm{a}}$ is not monotone w.r.t.\ set inclusion.

\begin{example}\label{ex:not:nested}
Let $a_1=(1,0)^T$ and $a_2=(-1,0)^T$, and consider
\[K_1=\co\{a_1,a_2\}+B_{\eps}(0)\in\mc{K}^d,\quad K_2=\{(0,\eps)^T\}\in\mc{K}^d.\]
Then $K_2\subset K_1$, but we have
\[\pi^\flat_{\bm{a}}(K_1)=(x_1,x_2)=((1+\eps,0)^T,(-1-\eps,0)^T),\quad
\pi^\flat_{\bm{a}}(K_2)=((0,\eps)^T,(0,\eps)^T).\]
Obviously, we have $\co\{\pi^\flat_{\bm{a}}(K_2)\}\not\subset\co\{\pi^\flat_{\bm{a}}(K_1)\}$.
\end{example}

However, the projector $\pi^\flat_{\bm{a}}$ is compatible with Minkowski addition and
positive scaling.

\begin{lemma}\label{lem:Minkowski:compatible}
For all $K,L\in\mc{K}^d$ and $\lambda\ge0$, we have
\[\pi^\flat_{\bm{a}}(K+L)=\pi^\flat_{\bm{a}}(K)+\pi^\flat_{\bm{a}}(L)
\quad\text{and}\quad
\pi^\flat_{\bm{a}}(\lambda K)=\lambda\pi^\flat_{\bm{a}}(K),\]
where addition and scalar multiplication on the right-hand sides are understood componentwise in $\R^{d\times N}$.
\end{lemma}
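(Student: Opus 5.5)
Both identities reduce to statements about faces of convex bodies, because $\pi^\flat_{\bm{a}}(K)$ is the Cartesian product $\prod_{i=1}^N F_i(K)$ with $F_i(K):=\argmax_{x\in K}a_i^Tx$ (see the proof of Theorem \ref{thm:basic:properties:projectors}). Addition and scalar multiplication in $\R^{d\times N}$ act componentwise. It is therefore enough to prove, for each fixed $i$, the two face identities
\[F_i(K+L)=F_i(K)+F_i(L)\quad\text{and}\quad F_i(\lambda K)=\lambda F_i(K),\]
and then take the product over $i$. For products of sets, $\prod_i(A_i+B_i)=\prod_iA_i+\prod_iB_i$ holds trivially, and likewise for scaling.

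For the Minkowski sum, fix $i$ and write $a=a_i$, with $h_K(a):=\max_{x\in K}a^Tx$. Any $x+y\in K+L$ satisfies $a^T(x+y)\le h_K(a)+h_L(a)$, with equality if and only if $a^Tx=h_K(a)$ and $a^Ty=h_L(a)$. This gives $h_{K+L}(a)=h_K(a)+h_L(a)$, since the bound is attained by maximizers, which exist by compactness. It also gives $F_i(K)+F_i(L)\subset F_i(K+L)$.

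For the reverse inclusion, let $z\in F_i(K+L)$ and write $z=x+y$ with $x\in K$ and $y\in L$. From $a^Tx+a^Ty=h_K(a)+h_L(a)$ together with $a^Tx\le h_K(a)$ and $a^Ty\le h_L(a)$, both inequalities must be equalities. Hence $x\in F_i(K)$ and $y\in F_i(L)$. The point to get right is that \emph{every} decomposition of $z$ works, so no care is needed in choosing it; I expect this to be the only step requiring attention.

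For scaling, the case $\lambda>0$ follows because $x\mapsto\lambda x$ is a bijection $K\to\lambda K$ with $a^T(\lambda x)=\lambda a^Tx$, so it maps maximizers onto maximizers. The case $\lambda=0$ must be handled separately, since then the objective is not merely rescaled. Here $0\cdot K=\{0\}$, so $F_i(\{0\})=\{0\}=0\cdot F_i(K)$, using that $F_i(K)$ is nonempty. Assembling the componentwise identities yields both claims of Lemma \ref{lem:Minkowski:compatible}.
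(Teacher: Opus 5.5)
Your proof is correct and follows essentially the same argument as the paper. For the forward inclusion you add maximizers componentwise. For the reverse inclusion you show that every decomposition $z_i=x_i+y_i$ has both summands maximizing: the paper does this by substituting $x=x_i$ and $y=y_i$ into the optimality inequality, and you use the equality case of $h_{K+L}=h_K+h_L$. Your explicit treatment of $\lambda=0$ fills in a case that the paper only calls ``similar''.
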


\begin{proof}
Let $\bm{x}\in\pi^\flat_{\bm{a}}(K)$ and $\bm{y}\in\pi^\flat_{\bm{a}}(L)$,
and fix $i$.
Then $x_i\in K$ and $y_i\in L$,
and $a_i^Tx_i\ge a_i^Tx$ for all $x\in K$ and $a_i^Ty_i\ge a_i^Ty$ for all $y\in L$.
Since both $x_i+y_i\in K+L$ and $a_i^T(x_i+y_i)\ge a_i^T(x+y)$
for all $(x,y)\in K\times L$, we have $x_i+y_i\in\argmax_{z\in K+L}a_i^Tz$.
All in all, we find
$\pi^\flat_{\bm{a}}(K)+\pi^\flat_{\bm{a}}(L)\subset\pi^\flat_{\bm{a}}(K+L)$.

Conversely, let $\bm{z}\in\pi^\flat_{\bm{a}}(K+L)$ and fix $i$.
Since $z_i\in K+L$ there exist $x_i\in K$ and $y_i\in L$ with $z_i=x_i+y_i$, and
\[a_i^Tx_i+a_i^Ty_i=a_i^Tz_i\ge a_i^Tx+a_i^Ty\quad\forall\,(x,y)\in K\times L.\]
Picking $x=x_i$ forces $a_i^Ty_i\ge a_i^Ty$ for all $y\in L$,
and picking $y=y_i$ forces $a_i^Tx_i\ge a_i^Tx$ for all $x\in K$,
and hence $x_i\in\argmax_{x\in K}a_i^Tx$ and $y_i\in\argmax_{y\in L}a_i^Ty$.
All in all, we find
$\pi^\flat_{\bm{a}}(K+L)\subset\pi^\flat_{\bm{a}}(K)+\pi^\flat_{\bm{a}}(L)$.

The proof of the second statement is similar.
\end{proof}

Lemma \ref{lem:Minkowski:compatible} does not extend to $\Pi^\flat_{\bm{a}}$.

\begin{example}\label{ex:Pi:Minkowski}
Let $a_1=(1,0)^T$ and $a_2=(-1,0)^T$,
define $x_1=(1,1)^T$, $x_2=(-1,-1)^T$, $y_1=(1,-1)^T$ and $y_2=(-1,1)^T$,
and consider the line segments
$K:=\co\{x_1,x_2\}$ and $L:=\co\{y_1,y_2\}$.
Then $\pi^\flat_{\bm{a}}(K)=(x_1,x_2)$
and $\pi^\flat_{\bm{a}}(L)=(y_1,y_2)$,
and consequently,
$\Pi^\flat_{\bm{a}}(K)=K$,
$\Pi^\flat_{\bm{a}}(L)=L$ and
\[\Pi^\flat_{\bm{a}}(K)+\Pi^\flat_{\bm{a}}(L)
=K+L=\co\{( 2, 0)^T,( 0, 2)^T,(-2, 0)^T,( 0,-2)^T\},\]
which is the shaded region in Figure \ref{fig:Pi:Minkowski}.
On the other hand, we have $\pi^\flat_{\bm{a}}(K+L)=(( 2, 0)^T,(-2, 0)^T)$
and hence
\[\Pi^\flat_{\bm{a}}(K+L)=\co\{( 2, 0)^T,(-2, 0)^T\},\]
which is the horizontal line in Figure \ref{fig:Pi:Minkowski}.
\end{example}

\begin{figure}[t]
\centering
\begin{tabular}{ccc}

\begin{tikzpicture}[scale=0.7]
  \useasboundingbox (-1.7,-1.8) rectangle (1.9,1.8);

  \coordinate (X1) at (1,1);
  \coordinate (X2) at (-1,-1);

  \fill (X1) circle (2pt) node[above left] {$x_1$};
  \fill (X2) circle (2pt) node[below right] {$x_2$};

  \draw[very thick] (X1)--(X2);

  \draw[->,thick] (X1)--++(0.8,0) node[below] {$a_1$};
  \draw[->,thick] (X2)--++(-0.8,0) node[above] {$a_2$};

  \node at (0.5,-0.1) {$K$};
\end{tikzpicture}

&\hspace{0.3cm}

\begin{tikzpicture}[scale=0.7]
  \useasboundingbox (-1.9,-1.8) rectangle (1.9,1.8);

  \coordinate (Y1) at (1,-1);
  \coordinate (Y2) at (-1,1);

  \fill (Y1) circle (2pt) node[below left] {$y_1$};
  \fill (Y2) circle (2pt) node[above right] {$y_2$};

  \draw[very thick] (Y1)--(Y2);

  \draw[->,thick] (Y1)--++(0.8,0) node[above] {$a_1$};
  \draw[->,thick] (Y2)--++(-0.8,0) node[below] {$a_2$};

  \node at (-0.6,-0.1) {$L$};
\end{tikzpicture}

&\hspace{0.3cm}

\begin{tikzpicture}[scale=0.7]
  \useasboundingbox (-2.5,-1.8) rectangle (2.5,1.8);

  \coordinate (N) at (0,2);
  \coordinate (E) at (2,0);
  \coordinate (S) at (0,-2);
  \coordinate (W) at (-2,0);

  \fill [blue!20] (N)--(E)--(S)--(W)--cycle;
  \draw[thick] (N)--(E)--(S)--(W)--cycle;

  \draw[very thick] (E)--(W);

  \fill (N) circle (2pt);
  \fill (E) circle (2pt);
  \fill (S) circle (2pt);
  \fill (W) circle (2pt);

  \draw[->,thick] (E)--++(0.8,0) node[below] {$a_1$};
  \draw[->,thick] (W)--++(-0.8,0) node[above] {$a_2$};

  \node[right] at (1.2,-1) {$K+L$};
  \node[above] at (0,0) {\small$\Pi^\flat_{\bm{a}}(K+L)$};
  \node[above right] at ([xshift=-5pt]E) {$(2,0)^T$};
  \node[above right] at ([xshift=-5pt,yshift=0pt]N) {$(0,2)^T$};
\end{tikzpicture}

\end{tabular}
\caption{Situation from Example \ref{ex:Pi:Minkowski}.
The horizontal black line $\Pi^\flat_{\bm{a}}(K+L)$ does not coincide
with the shaded diamond $\Pi^\flat_{\bm{a}}(K)+\Pi^\flat_{\bm{a}}(L)=K+L$
in the right sketch.}
\label{fig:Pi:Minkowski}
\end{figure}

Now we turn to continuity properties of the projectors.
The proof of the following theorem can in principle be deduced
from the Berge maximum principle \cite[Theorem 16.31]{Aliprantis},
but a direct proof is simpler.

\begin{theorem}
The projectors $\pi^\flat_{\bm{a}}:\mc{K}^d\rightrightarrows\mc{C}^\flat_{\bm{a}}$
and $\Pi^\flat_{\bm{a}}:\mc{K}^d\rightrightarrows\mc{P}^\flat_{\bm{a}}$ are upper
semicontinuous, i.e.\ for every $K\in\mc{K}^d$ and $\eps>0$ there exists $\delta>0$
such that every $L\in\mc{K}^d$ with $\dist_H(K,L)\le\delta$ satisfies
\[\dist(\pi^\flat_{\bm{a}}(L),\pi^\flat_{\bm{a}}(K))\le\eps
\quad\text{and}\quad
\sup_{L'\in\Pi^\flat_{\bm{a}}(L)}\inf_{K'\in\Pi^\flat_{\bm{a}}(K)}
\dist_H(L',K')\le\eps.\]
\end{theorem}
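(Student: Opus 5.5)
The upper semicontinuity of $\pi^\flat_{\bm{a}}$ follows from a compactness-and-contradiction argument applied to each face map $F_i(K):=\argmax_{x\in K}a_i^Tx$ separately. The statement for $\Pi^\flat_{\bm{a}}$ then follows from Corollary \ref{cor:embedding:Lipschitz}.

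\emph{Reduction to the faces.} Since $\pi^\flat_{\bm{a}}(K)=F_1(K)\times\cdots\times F_N(K)$ and $\dist_{\mc{C}^\flat_{\bm{a}}}$ is the maximum of the componentwise distances, it suffices to show the following. For every $i$, every $K\in\mc{K}^d$ and every $\eps>0$ there is $\delta_i>0$ such that $\dist_H(K,L)\le\delta_i$ implies $\dist(F_i(L),F_i(K))\le\eps$. We then take $\delta:=\min_i\delta_i$. Indeed, given $\bm{y}\in\pi^\flat_{\bm{a}}(L)$, we choose for each $i$ a point $x_i\in F_i(K)$ with $\|y_i-x_i\|\le\eps$. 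The resulting $\bm{x}$ lies in $\pi^\flat_{\bm{a}}(K)$ and satisfies $\dist_{\mc{C}^\flat_{\bm{a}}}(\bm{y},\bm{x})\le\eps$.

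\emph{Upper semicontinuity of $F_i$.} We argue by contradiction. Suppose there are $\eps>0$, sets $L_k$ with $\dist_H(K,L_k)\to0$, and points $y_k\in F_i(L_k)$ with $\dist(y_k,F_i(K))>\eps$. The sets $L_k$ are uniformly bounded, so after passing to a subsequence $y_k\to y$. Because $\dist(y_k,K)\le\dist_H(L_k,K)\to0$, we get $y\in K$. Next we show $y\in F_i(K)$. Let $h_K(a_i):=\max_{x\in K}a_i^Tx$. Since $\|a_i\|=1$, the support function is $1$-Lipschitz in the set argument: $|h_L(a_i)-h_K(a_i)|\le\dist_H(K,L)$. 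Hence
\[a_i^Ty=\lim_k a_i^Ty_k=\lim_k h_{L_k}(a_i)=h_K(a_i),\]
so $y\in F_i(K)$. This contradicts $\dist(y,F_i(K))\ge\eps$, which follows from the previous bound by continuity. A direct quantitative version would be messier, which is why I would use the contradiction argument.

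\emph{Transfer to $\Pi^\flat_{\bm{a}}$.} Take $\delta$ as above and let $L'\in\Pi^\flat_{\bm{a}}(L)$, so $L'=\co\{\bm{y}\}$ with $\bm{y}\in\pi^\flat_{\bm{a}}(L)$. Choose $\bm{x}\in\pi^\flat_{\bm{a}}(K)$ with $\dist_{\mc{C}^\flat_{\bm{a}}}(\bm{x},\bm{y})\le\eps$. Such an $\bm{x}$ exists because $\pi^\flat_{\bm{a}}(K)$ is compact (Theorem \ref{thm:basic:properties:projectors}) and, more directly, because of the componentwise choice above. By Corollary \ref{cor:embedding:Lipschitz}, $K':=\co\{\bm{x}\}\in\Pi^\flat_{\bm{a}}(K)$ satisfies $\dist_H(L',K')\le\eps$.

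The main obstacle is the step $y\in F_i(K)$, that is, showing that limits of maximizers are maximizers. The Lipschitz bound on support functions resolves it.
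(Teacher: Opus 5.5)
Your proof is correct and follows essentially the same route as the paper. You use a compactness-and-contradiction argument showing that limits of maximizers are maximizers, and then Corollary~\ref{cor:embedding:Lipschitz} for $\Pi^\flat_{\bm{a}}$. The differences are only cosmetic: you split $\pi^\flat_{\bm{a}}$ into the factors $F_i$ whereas the paper argues on the whole tuple at once, and you identify the limit as a maximizer via $|\sigma(L_k,a_i)-\sigma(K,a_i)|\le\dist_H(L_k,K)$ rather than by approximating each $z\in K$ with points $z_k\in K_k$.
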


\begin{proof}
If the first assertion is false, then there exist $K\in\mc{K}^d$, $\eps>0$ and sequences
$(K_k)_k\subset\mc{K}^d$ and $(\bm{x}_k)_k\subset\R^{d\times N}$
with $\bm{x}_k\in\pi^\flat_{\bm{a}}(K_k)$ for all $k$ and
\begin{align}
&\lim_{k\to\infty}\dist_H(K_k,K)=0,\label{local:dh}\\
&\dist(\bm{x}_k,\pi^\flat_{\bm{a}}(K))>\eps\quad\forall\,k\in\N.\label{local:separation}
\end{align}
By \eqref{local:dh} the sequence $(K_k)_k$ is bounded, and hence so is $(\bm{x}_k)_k$.
Thus there exist $\N'\subset\N$ and $\bm{x}\in\R^{d\times N}$
such that $\bm{x}=\lim_{\N'\ni k\to\infty}\bm{x}_k$.
Because of $\{\bm{x}_k\}\subset K_k$ for all $k$ and \eqref{local:dh}
we have $\{\bm{x}\}\subset K$.
Let $z\in K$. 
By \eqref{local:dh} there exist $z_k\in K_k$ for $k\in\N$ with $z=\lim_{k\to\infty}z_k$.
Since $\bm{x}_k\in\pi^\flat_{\bm{a}}(K_k)$, we have
\[a_i^Tz
=\lim_{\N'\ni k\to\infty}a_i^Tz_k
\le\lim_{\N'\ni k\to\infty}a_i^T[\bm{x}_k]_i
=a_i^T[\bm{x}]_i\]
for all $i$.
Since $z$ was arbitrary, this shows $\bm{x}\in\pi^\flat_{\bm{a}}(K)$,
which contradicts \eqref{local:separation}.
All in all, the mapping $\pi^\flat_{\bm{a}}$ is upper semicontinuous.

\medskip

For the second assertion, let $K\in\mc{K}^d$ and $\eps>0$,
and select $\delta>0$ according to the first assertion.
Let $L\in\mc{K}^d$ with $\dist_H(K,L)\le\delta$ and
$L'\in\Pi^\flat_{\bm{a}}(L)$.
Then there exists $\bm{x}\in\pi^\flat_{\bm{a}}(L)$ such that
$L'=\co\{\bm{x}\}$.
By the first assertion, and since $\pi^\flat_{\bm{a}}(K)$ is compact
by Theorem \ref{thm:basic:properties:projectors},
there exists $\bm{y}\in\pi^\flat_{\bm{a}}(K)$ such that
$\dist_{\mc{C}^\flat_{\bm{a}}}(\bm{x},\bm{y})\le\eps$.
Then $K':=\co\{\bm{y}\}\in\Pi^\flat_{\bm{a}}(K)$,
and Corollary \ref{cor:embedding:Lipschitz} yields
\[\dist_H(L',K')
=\dist_H(\co\{\bm{x}\},\co\{\bm{y}\})
\le\dist_{\mc{C}^\flat_{\bm{a}}}(\bm{x},\bm{y})
\le\eps,\]
which proves the second assertion.
\end{proof}

The mappings $\pi^\flat_{\bm a}$ and $\Pi^\flat_{\bm a}$ are in general
not lower semicontinuous.

\begin{example}\label{ex:not:lsc}
Let $a_1=\frac{1}{\sqrt{2}}(1,1)^T$,
$a_2=\frac{1}{\sqrt{2}}(-1,1)^T$ and
$a_3=(0,-1)^T$
and consider the sets
$K_k:=\co\{(-1,-1)^T,(-1,1)^T,(1,1)^T,(1,-1+1/k)^T\}$
and $K:=[-1,1]^2$.
Then $\dist_H(K_k,K)\to0$ and
\[\pi^\flat_{\bm a}(K_k)=\left\{\big((1,1)^T,(-1,1)^T,(-1,-1)^T\big)\right\},\]
while
$\pi^\flat_{\bm a}(K)=\left\{\big((1,1)^T,(-1,1)^T,(s,-1)^T\big):s\in[-1,1]\right\}$.
Moreover,
\[P:=\co\{(1,1)^T,(-1,1)^T,(1,-1)^T\}\in\Pi^\flat_{\bm a}(K),\]
while
$\Pi^\flat_{\bm a}(K_k)=\left\{\co\{(1,1)^T,(-1,1)^T,(-1,-1)^T\}\right\}$
for every $k$.
\end{example}

\begin{figure}
\centering
\def\epsfig{0.28}

\begin{tabular}{ccc}

\begin{tikzpicture}[scale=0.86]
\useasboundingbox (-1.6,-1.55) rectangle (1.6,1.55);

\coordinate (A) at (-1,-1);
\coordinate (B) at (1,{-1+\epsfig});
\coordinate (C) at (1,1);
\coordinate (D) at (-1,1);

\fill[blue!20] (A)--(B)--(C)--(D)--cycle;
\draw[thick] (A)--(B)--(C)--(D)--cycle;

\fill (A) circle (2pt);
\fill (B) circle (2pt);
\fill (C) circle (2pt);
\fill (D) circle (2pt);

\draw[->,thick] (C)--++(.55,.55) node[above right] {$a_1$};
\draw[->,thick] (D)--++(-.55,.55) node[above left] {$a_2$};
\draw[->,thick] (A)--++(0,-.75) node[below] {$a_3$};

\node at (0,0) {$K_k$};
\end{tikzpicture}

&{\hspace{1cm}}

\begin{tikzpicture}[scale=0.86]
\useasboundingbox (-1.6,-1.55) rectangle (1.6,1.55);

\coordinate (X1) at (1,1);
\coordinate (X2) at (-1,1);
\coordinate (X3) at (-1,-1);

\fill (X1) circle (2pt) node[below] {$[\pi^\flat_{\bm a}(K_k)]_1$};
\fill (X2) circle (2pt) node[below] {$[\pi^\flat_{\bm a}(K_k)]_2$};
\fill (X3) circle (2pt) node[above] {$[\pi^\flat_{\bm a}(K_k)]_3$};

\draw[->,thick] (X1)--++(.55,.55) node[above right] {$a_1$};
\draw[->,thick] (X2)--++(-.55,.55) node[above left] {$a_2$};
\draw[->,thick] (X3)--++(0,-.75) node[below] {$a_3$};

\end{tikzpicture}

&{\hspace{1cm}}

\begin{tikzpicture}[scale=0.86]
\useasboundingbox (-1.6,-1.55) rectangle (1.6,1.55);

\coordinate (X1) at (1,1);
\coordinate (X2) at (-1,1);
\coordinate (X3) at (-1,-1);

\fill[blue!20] (X1)--(X2)--(X3)--cycle;
\draw[thick] (X1)--(X2)--(X3)--cycle;

\fill (X1) circle (2pt);
\fill (X2) circle (2pt);
\fill (X3) circle (2pt);

\node at (0,-1.35) {$\Pi^\flat_{\bm a}(K_k)$};
\end{tikzpicture}

\\[1cm]

\begin{tikzpicture}[scale=0.86]
\useasboundingbox (-1.6,-1.55) rectangle (1.6,1.55);

\coordinate (A) at (-1,-1);
\coordinate (B) at (1,-1);
\coordinate (C) at (1,1);
\coordinate (D) at (-1,1);

\fill[blue!20] (A)--(B)--(C)--(D)--cycle;
\draw[thick] (A)--(B)--(C)--(D)--cycle;

\fill (A) circle (2pt);
\fill (B) circle (2pt);
\fill (C) circle (2pt);
\fill (D) circle (2pt);

\draw[->,thick] (C)--++(.55,.55) node[above right] {$a_1$};
\draw[->,thick] (D)--++(-.55,.55) node[above left] {$a_2$};
\draw[->,thick] (0,-1)--++(0,-.75) node[below] {$a_3$};

\node at (0,0) {$K$};
\end{tikzpicture}

&{\hspace{1cm}}

\begin{tikzpicture}[scale=0.86]
\useasboundingbox (-1.6,-1.55) rectangle (1.6,1.55);

\coordinate (X1) at (1,1);
\coordinate (X2) at (-1,1);
\coordinate (X3) at (-1,-1);
\coordinate (X4) at (1,-1);

\fill (X1) circle (2pt) node[below] {$[\pi^\flat_{\bm a}(K)]_1$};
\fill (X2) circle (2pt) node[below] {$[\pi^\flat_{\bm a}(K)]_2$};
\draw[very thick] (X3)--(X4)
  node[midway,above] {$[\pi^\flat_{\bm a}(K)]_3$}
  coordinate[midway] (M);

\draw[->,thick] (X1)--++(.55,.55) node[above right] {$a_1$};
\draw[->,thick] (X2)--++(-.55,.55) node[above left] {$a_2$};
\draw[->,thick] (M)--++(0,-.7) node[below] {$a_3$};

\end{tikzpicture}

&{\hspace{1cm}}

\begin{tikzpicture}[scale=0.86]
\useasboundingbox (-1.6,-1.55) rectangle (1.6,1.55);

\coordinate (X1) at (1,1);
\coordinate (X2) at (-1,1);
\coordinate (XL) at (-1,-1);
\coordinate (XM) at (0,-1);
\coordinate (XR) at (1,-1);

\fill[blue!10] (X2)--(X1)--(XL)--cycle;
\fill[blue!10] (X2)--(X1)--(XM)--cycle;
\fill[blue!10] (X2)--(X1)--(XR)--cycle;

\draw[dashed] (X2)--(XL)--(X1);
\draw[dashed] (X2)--(XM)--(X1);
\draw[thick]  (X2)--(XR)--(X1)--cycle;

\fill (X1) circle (2pt);
\fill (X2) circle (2pt);
\fill (XL) circle (2pt);
\fill (XM) circle (2pt);
\fill (XR) circle (2pt);

\node at (0,-1.35) {$\Pi^\flat_{\bm a}(K)$};
\end{tikzpicture}
\\[0.2cm]
\end{tabular}
\caption{In Example \ref{ex:not:lsc}, we have $\lim_{k\to\infty}\dist_H(K_k,K)=0$,
but $\pi^\flat_{\bm{a}}(K_k)$
and $\Pi^\flat_{\bm{a}}(K_k)$ are constant in $k$ with
$\pi^\flat_{\bm{a}}(K_k)\subsetneq\pi^\flat_{\bm{a}}(K)$
and $\Pi^\flat_{\bm{a}}(K_k)\subsetneq\Pi^\flat_{\bm{a}}(K)$.}
\label{fig:not:lsc}
\end{figure}

\section{Approximation and Galerkin sequences}
\label{sec:approximation}

We cite the following facts from Lemmas 1.8.12 and 1.8.14 in \cite{Schneider}.
Recall that the support function of a set $K\in\mc{K}^d$ is the function
\[\sigma(K,\,\cdot\,):\R^d\to\R,\quad\sigma(K,u):=\max_{x\in K}u^Tx.\]

\begin{lemma}
For $K,L\in\mc{K}^d$ and $u,v\in\R^d$ we have
\begin{align}
|\sigma(K,u)-\sigma(K,v)|&\le\|K\|\|u-v\|,\label{bound:support:function:vector}\\
\dist_H(K,L)&=\sup_{u\in S^{d-1}}|\sigma(K,u)-\sigma(L,u)|.\label{bound:distance}
\end{align}
\end{lemma}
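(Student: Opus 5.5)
The lemma consists of two classical facts about support functions, and I would prove each directly from the definitions rather than rely only on the citation.

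For \eqref{bound:support:function:vector}, fix $u,v\in\R^d$. Choose $x_u\in\argmax_{x\in K}u^Tx$; such a maximizer exists because $K$ is compact. Then
\[\sigma(K,u)-\sigma(K,v)\le u^Tx_u-v^Tx_u=(u-v)^Tx_u\le\|u-v\|\|x_u\|\le\|K\|\|u-v\|.\]
Exchanging the roles of $u$ and $v$ gives the same bound for $\sigma(K,v)-\sigma(K,u)$, and together the two inequalities give the absolute value estimate.

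For \eqref{bound:distance}, I would prove the two inequalities separately, treating the semi-distances one at a time.

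\textbf{Support gap bounded by the Hausdorff distance.} Set $\rho:=\dist(K,L)$. Each $x\in K$ has a point $y\in L$ with $\|x-y\|\le\rho$, so for $u\in S^{d-1}$ we get $u^Tx\le u^Ty+\rho\le\sigma(L,u)+\rho$. Maximizing over $x\in K$ gives $\sigma(K,u)-\sigma(L,u)\le\dist(K,L)$. By symmetry, $\sigma(L,u)-\sigma(K,u)\le\dist(L,K)$. Hence $\sup_u|\sigma(K,u)-\sigma(L,u)|\le\dist_H(K,L)$.

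\textbf{Hausdorff distance bounded by the support gap.} This is the direction needing a separation argument. Let $s:=\sup_{u\in S^{d-1}}|\sigma(K,u)-\sigma(L,u)|$ and take $x\in K$ with $r:=\dist(x,L)>0$. Let $y$ be the nearest point of $L$ to $x$, which is unique since $L$ is closed and convex, and put $u:=(x-y)/r\in S^{d-1}$. The variational characterization of the metric projection onto $L$ gives $u^T(z-y)\le0$ for all $z\in L$. Therefore
\[\sigma(L,u)=u^Ty=u^Tx-r\le\sigma(K,u)-r,\]
so $r\le\sigma(K,u)-\sigma(L,u)\le s$. This yields $\dist(K,L)\le s$, and by symmetry $\dist(L,K)\le s$.

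The only step beyond routine manipulation is this projection/separation argument. I would justify it either by invoking the standard projection theorem onto closed convex sets, or directly: if some $z\in L$ had $u^T(z-y)>0$, then points $y+t(z-y)$ for small $t>0$ would lie in $L$ and be closer to $x$ than $y$, contradicting minimality.
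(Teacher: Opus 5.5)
Your proof is correct. The paper does not prove this lemma itself; it only cites Lemmas 1.8.12 and 1.8.14 of Schneider. Your argument for \eqref{bound:support:function:vector} is the standard one.

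For \eqref{bound:distance} your route differs from the usual textbook proof behind the citation. That proof uses
\[\dist_H(K,L)=\min\{\lambda\ge0:K\subset L+B_\lambda(0),\ L\subset K+B_\lambda(0)\}.\]
It then turns each inclusion into $\sigma(K,\cdot)\le\sigma(L,\cdot)+\lambda$ on $S^{d-1}$. This step relies on additivity of support functions under Minkowski addition and on the equivalence $M\subset M'\iff\sigma(M,\cdot)\le\sigma(M',\cdot)$ for convex bodies, which is where separation enters.

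You instead work pointwise. The easy inequality follows from the triangle inequality. For the hard one, you project $x\in K$ onto $L$ and use $u=(x-y)/\|x-y\|$ as an explicit separating direction, justified by a short variational argument.

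Your version is self-contained and avoids the support-function calculus. It also shows exactly where convexity is needed: only of $L$ to bound $\dist(K,L)$, and only of $K$ to bound $\dist(L,K)$. This explains why the identity fails for nonconvex compact sets. Applied to a farthest point, it even produces a direction at which the supremum is attained. The Minkowski-sum route is shorter once that calculus is available, and it handles both semi-distances in one stroke.
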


Recalling the chordal covering radius
\[\delta_{\bm{a}}:=\max_{u\in S^{d-1}}\min_{i\in\{1,\ldots,N\}}\|u-a_i\|\]
we have the following approximation result.

\begin{theorem}\label{thm:approximation}
For all $K\in\mc{K}^d$ and $P\in\Pi^\flat_{\bm{a}}(K)$,
we have
\[\dist(P,K)=0
\quad\text{and}\quad
\dist(K,P)\le2\delta_{\bm{a}}\|K\|.\]
\end{theorem}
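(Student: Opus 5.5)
The first claim, $\dist(P,K)=0$, follows at once from Lemma \ref{lemma:subset}: every $P\in\Pi^\flat_{\bm{a}}(K)$ satisfies $P\subset K$, so each point of $P$ lies in $K$.

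For the second claim I will use the support function characterization \eqref{bound:distance}. Since $P\subset K$, we have $\sigma(P,u)\le\sigma(K,u)$ for every $u$. Hence
\[\dist(K,P)\le\dist_H(K,P)=\sup_{u\in S^{d-1}}\big(\sigma(K,u)-\sigma(P,u)\big).\]
It therefore suffices to bound $\sigma(K,u)-\sigma(P,u)$ by $2\delta_{\bm{a}}\|K\|$ for every $u\in S^{d-1}$.

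Fix $u\in S^{d-1}$, write $P=\co\{\bm{x}\}$ with $\bm{x}\in\pi^\flat_{\bm{a}}(K)$, and choose $i$ with $\|u-a_i\|\le\delta_{\bm{a}}$. Since $x_i\in P$ and $x_i\in\argmax_{x\in K}a_i^Tx$, we get $\sigma(P,u)\ge u^Tx_i$ and $a_i^Tx_i=\sigma(K,a_i)$. Split the gap as
\[\sigma(K,u)-\sigma(P,u)\le\big(\sigma(K,u)-\sigma(K,a_i)\big)+\big(a_i^Tx_i-u^Tx_i\big).\]
The first term is at most $\|K\|\|u-a_i\|$ by \eqref{bound:support:function:vector}. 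For the second term, Cauchy--Schwarz together with $x_i\in K$ gives the bound $\|a_i-u\|\|x_i\|\le\|K\|\|u-a_i\|$. Adding the two yields $2\|K\|\|u-a_i\|\le2\delta_{\bm{a}}\|K\|$, which proves the claim.

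There is no real obstacle here. The only step needing care is observing that the vertex $x_i$ serves two roles: it realizes $\sigma(K,a_i)$, and, because it lies in $P$, it gives a lower bound on $\sigma(P,u)$. This is exactly why the bound carries the factor $2$ rather than $1$.
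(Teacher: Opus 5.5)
Your proof is correct and is essentially the paper's argument. You pick $a_i$ with $\|u-a_i\|\le\delta_{\bm{a}}$, use that $P$ and $K$ share the support value in direction $a_i$, and pay $\|K\|\|u-a_i\|$ twice. The only cosmetic differences are that you drop absolute values via $P\subset K$, and that you bound the $P$-side term directly through $x_i$ and Cauchy--Schwarz, where the paper applies \eqref{bound:support:function:vector} to $P$ together with $\|P\|\le\|K\|$.
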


\begin{proof}
Lemma \ref{lemma:subset} yields $\dist(P,K)=0$ and $\|P\|\le\|K\|$.
Let $a\in S^{d-1}$.
Then there exists $i$ such that $\|a-a_i\|\le\delta_{\bm{a}}$.
By definition of $\Pi^\flat_{\bm{a}}$ we have
\begin{equation}\label{exact:at:support:point}
\sigma(K,a_i)=\sigma(P,a_i).
\end{equation}
Using \eqref{bound:support:function:vector}, \eqref{exact:at:support:point}
and again \eqref{bound:support:function:vector} we obtain
\begin{align*}
&|\sigma(K,a)-\sigma(P,a)|\\
&\le|\sigma(K,a)-\sigma(K,a_i)|
+|\sigma(K,a_i)-\sigma(P,a_i)|
+|\sigma(P,a_i)-\sigma(P,a)|\\
&\le\|K\|\|a-a_i\|+0+\|P\|\|a_i-a\|
\le2\|K\|\|a-a_i\|
\le2\delta_{\bm{a}}\|K\|.
\end{align*}
Since $a$ was arbitrary, identity \eqref{bound:distance} yields the desired result.
\end{proof}

We introduce nested Galerkin sequences.

\begin{definition}
Let $(N_k)_k\subset\N$ be strictly increasing, and for $k\in\N$ let
$\bm{a}_k\in\R^{d\times N_k}$
with $\|[\bm{a}_k]_i\|=1$ for all $i$
and $[\bm{a}_k]_i\neq[\bm{a}_k]_j$ for $i\neq j$.
\begin{itemize}
\item [a)] We call $(\bm{a}_k)_k$ a Galerkin sequence if
$\lim_{k\to\infty}\delta_{\bm{a}_k}=0$.
\item [b)] We call $(\bm{a}_k)_k$ nested if $[\bm{a}_k]_i=[\bm{a}_\ell]_i$ for all $k\le\ell$
and $i\in\{1,\ldots,N_k\}$.
\end{itemize}
\end{definition}

We present an example of a nested Galerkin sequence.
It has many symmetries, which is helpful for theoretical considerations,
and it is easy to handle in computations.

\begin{example}\label{ex:spherical}
Let $d\ge 3$ and let $s:[0,\pi]^{d-2}\times[0,2\pi]\to S^{d-1}$
be the spherical-coordinate parametrization
\[s(\phi):=
(\cos\phi_1, \sin\phi_1\cos\phi_2,\ldots,
(\prod_{j=1}^{d-2}\sin\phi_j)\cos\phi_{d-1},
(\prod_{j=1}^{d-2}\sin\phi_j)\sin\phi_{d-1}).\]
For $k\in\mathbb N$, define
\[G_k:=\{\tfrac{m\pi}{2^k}:m=0,\ldots,2^k\},\quad
H_k:=\{\tfrac{m\pi}{2^k}:m=0,\ldots,2^{k+1}\},\]
\[A_k:=\left\{s(\phi_1,\ldots,\phi_{d-1}):
\phi_i\in G_k\ \text{for}\ i=1,\ldots,d-2,\ \phi_{d-1}\in H_k\right\}.\]
Let $N_k:=\#A_k$ and choose
$\bm a_k=\bigl([\bm a_k]_1,\ldots,[\bm a_k]_{N_k}\bigr)\in\mathbb R^{d\times N_k}$
such that $\{\bm a_k\}=A_k$.
Since $A_k\subset A_{k+1}$, the vectors $\bm a_k$ may be indexed
recursively such that $[\bm a_{k+1}]_i=[\bm a_k]_i$ for $i=1,\ldots,N_k$.

\medskip

We show that $\lim_{k\to\infty}\delta_{\bm a_k}=0$.
Let $u\in S^{d-1}$ and $\psi\in[0,\pi]^{d-2}\times[0,2\pi]$ with $u=s(\psi)$.
There exists $\phi=(\phi_1,\ldots,\phi_{d-1})\in G_k^{d-2}\times H_k$
such that
\[|\psi_i-\phi_i|\le\tfrac{\pi}{2^{k+1}}
\quad\text{for}\quad i=1,\ldots,d-1.\]
Clearly $s(\phi)\in A_k$.
Since $s\in C^1([0,\pi]^{d-2}\times[0,2\pi],\R^d)$
and $[0,\pi]^{d-2}\times[0,2\pi]$ is compact,
there exists $L>0$ such that $s$ is $L$-Lipschitz, and
\[\|u-s(\phi)\|=\|s(\psi)-s(\phi)\|\le L\|\psi-\phi\|\le L\pi\sqrt{d-1}/2^{k+1}.\]
Consequently,
\[\lim_{k\to\infty}\delta_{\bm a_k}
\le\lim_{k\to\infty}L\pi\sqrt{d-1}/2^{k+1}
=0,\]
which proves that $(\bm a_k)_k$ is a nested Galerkin sequence.
\end{example}

It follows immediately from Theorem \ref{thm:approximation}
that Galerkin sequences approximate convex bodies locally uniformly to
arbitrary precision.

\begin{corollary}
If $(\bm{a}_k)_k$ is a Galerkin sequence, then
\[\inf_{P\in\mc{P}^\flat_{\bm{a}_k}}\dist_H(P,K)\le2\delta_{\bm{a}_k}\|K\|\quad
\forall\,K\in\mc{K}^d.\]
\end{corollary}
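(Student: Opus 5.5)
The bound follows by choosing a concrete element of $\mc{P}^\flat_{\bm{a}_k}$ and applying Theorem \ref{thm:approximation}. Fix $k$ and $K\in\mc{K}^d$. By Theorem \ref{thm:basic:properties:projectors}, applied with $\bm{a}=\bm{a}_k$ (which satisfies the standing assumptions: unit vectors, pairwise distinct, and $N_k>1$ after discarding at most the first index of the strictly increasing sequence), the set $\Pi^\flat_{\bm{a}_k}(K)$ is nonempty. We may therefore pick some $P_k\in\Pi^\flat_{\bm{a}_k}(K)$. Since $\Pi^\flat_{\bm{a}_k}$ takes values in $\mc{P}^\flat_{\bm{a}_k}$, we have $P_k\in\mc{P}^\flat_{\bm{a}_k}$.

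Theorem \ref{thm:approximation} then gives $\dist(P_k,K)=0$ and $\dist(K,P_k)\le2\delta_{\bm{a}_k}\|K\|$. Hence
\[\dist_H(P_k,K)=\max\{\dist(P_k,K),\dist(K,P_k)\}\le2\delta_{\bm{a}_k}\|K\|.\]
Taking the infimum over $\mc{P}^\flat_{\bm{a}_k}$, which contains $P_k$, yields the claimed inequality for every $k$ and every $K$.

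The Galerkin property $\delta_{\bm{a}_k}\to0$ is not needed for the inequality itself. It only gives the interpretation: the bound tends to zero uniformly on bounded subsets $\{\|K\|\le R\}$ of $\mc{K}^d$, i.e.\ locally uniformly. There is no real obstacle here. The only point requiring care is that the theorem applies to each $\bm{a}_k$, including when $\delta_{\bm{a}_k}$ is large; in that case the bound is trivially true anyway.
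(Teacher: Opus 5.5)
Your proposal is correct and matches the paper, which states the corollary as an immediate consequence of Theorem \ref{thm:approximation} and gives no separate proof. Your argument is that implicit one made explicit: pick any $P\in\Pi^\flat_{\bm{a}_k}(K)\subset\mc{P}^\flat_{\bm{a}_k}$, which is nonempty by Theorem \ref{thm:basic:properties:projectors}, apply the bound $\dist_H(P,K)\le2\delta_{\bm{a}_k}\|K\|$, and take the infimum.
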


Nested sequences induce nested spaces.

\begin{proposition}
If $(\bm{a}_k)_k$ is nested, then the following holds for all $k$.
\begin{itemize}
\item [a)] For every $P_k\in\mc{P}^\flat_{\bm{a}_k}$
we have $P_k\in\Pi^\flat_{\bm{a}_{k+1}}(P_k)$.
\item [b)] In particular we have $\mc{P}^\flat_{\bm{a}_k}\subset\mc{P}^\flat_{\bm{a}_{k+1}}$.
\item [c)] For every $P_k\in\mc{P}^\flat_{\bm{a}_k}$
we have $\pi^\flat_{\bm{a}_{k+1}}(P_k)\cap\partial\mc{C}^\flat_{\bm{a}_{k+1}}\neq\emptyset$.
\end{itemize}
\end{proposition}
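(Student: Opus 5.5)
Since $P_k\in\mc{P}^\flat_{\bm{a}_k}$, there is $\bm{x}\in\mc{C}^\flat_{\bm{a}_k}$ with $P_k=\co\{\bm{x}\}$. The plan is to extend $\bm{x}$ to a configuration $\bm{y}\in\R^{d\times N_{k+1}}$ in $\pi^\flat_{\bm{a}_{k+1}}(P_k)$. We set $y_i:=x_i$ for $i\le N_k$, and for $i>N_k$ we choose any $y_i\in\argmax_{x\in P_k}[\bm{a}_{k+1}]_i^Tx$. The latter set is nonempty because $P_k$ is compact.

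\textbf{Part a).} By Theorem \ref{construction:site} a), applied with $\bm{a}_k$, we have $\bm{x}\in\pi^\flat_{\bm{a}_k}(P_k)$. So $x_i\in\argmax_{x\in P_k}[\bm{a}_k]_i^Tx$ for $i\le N_k$. Nestedness gives $[\bm{a}_{k+1}]_i=[\bm{a}_k]_i$ for these $i$, so $\bm{y}\in\pi^\flat_{\bm{a}_{k+1}}(P_k)$.

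It remains to show $\co\{\bm{y}\}=P_k$.
\begin{itemize}
\item Since $\{\bm{x}\}\subset\{\bm{y}\}$, we get $P_k\subset\co\{\bm{y}\}$.
\item Since every $y_i\in P_k$, Lemma \ref{lemma:subset} (or simply convexity) gives $\co\{\bm{y}\}\subset P_k$.
\end{itemize}
Hence $P_k=\co\{\bm{y}\}\in\Pi^\flat_{\bm{a}_{k+1}}(P_k)$.

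\textbf{Part b).} This follows immediately, because $\Pi^\flat_{\bm{a}_{k+1}}$ takes values in $\mc{P}^\flat_{\bm{a}_{k+1}}$.

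\textbf{Part c).} The same $\bm{y}$ should lie in $\partial\mc{C}^\flat_{\bm{a}_{k+1}}$. Since $\mc{C}^\flat_{\bm{a}_{k+1}}$ is closed (Lemma \ref{lemma:polyhedral:convex:cone}), it suffices to show $\bm{y}\notin\interior\mc{C}^\flat_{\bm{a}_{k+1}}$.

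Suppose instead that $\bm{y}$ is interior. Then Theorem \ref{theorem:interior:vertices} gives $\#\ext(\co\{\bm{y}\})=N_{k+1}$. On the other hand, $\co\{\bm{y}\}=P_k=\co\{\bm{x}\}$ with $\bm{x}\in\mc{C}^\flat_{\bm{a}_k}$, so Lemma \ref{lem:at:most:N:vertices} gives $\#\ext(P_k)\le N_k$. Strict monotonicity gives $N_k<N_{k+1}$, which is a contradiction.

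Alternatively, one can argue via Theorem \ref{theorem:int:CA}: for $i>N_k$ the point $y_i$ must coincide with some vertex $x_j$, $j\le N_k$, and then the strict inequalities $y_i\neq y_j$ fail. The counting argument is cleaner, and I do not expect a real obstacle.

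The only subtle point is that part c) needs $\bm{y}\in\mc{C}^\flat_{\bm{a}_{k+1}}$ in the first place, so that "not interior" means "boundary". This holds since $\pi^\flat_{\bm{a}_{k+1}}$ maps into $\mc{C}^\flat_{\bm{a}_{k+1}}$ by Theorem \ref{thm:basic:properties:projectors}.
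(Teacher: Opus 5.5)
Your proof is correct. Parts a) and b) use the same construction as the paper: extend $\bm{x}$ by arbitrary maximizers over $P_k$, then use $\{\bm{x}\}\subset\{\bm{y}\}\subset P_k$.

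For c) you take a genuinely different route. The paper argues directly from the defining inequalities. For each new index $i>N_k$, the maximum of $[\bm{a}_{k+1}]_i^T$ over $\co\{\bm{x}\}$ is attained at some generating point $x_j$ with $j\le N_k$. Hence $[\bm{a}_{k+1}]_i^Ty_j=[\bm{a}_{k+1}]_i^Ty_i$ with $j\neq i$, and Theorem \ref{theorem:int:CA} excludes the interior.

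Your counting argument combines Theorem \ref{theorem:interior:vertices} with Lemma \ref{lem:at:most:N:vertices}, applied to the identity $\co\{\bm{y}\}=P_k$ from part a). It proves more: no $\bm{z}\in\interior\mc{C}^\flat_{\bm{a}_{k+1}}$ whatsoever satisfies $\co\{\bm{z}\}=P_k$, however $\bm{y}$ was chosen. So every representation of a coarse-space polytope in the finer cone is a boundary point, and the reason is transparent: there are too few vertices.

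The paper's route is more elementary. It needs only the strict-inequality description of the interior, not part a) or the vertex counts of Section \ref{sec:interior}. It is also more informative about where $\bm{y}$ sits, since it exhibits an active constraint for every new direction.

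Your "alternative" remark is imprecise as stated. For $i>N_k$, the point $y_i$ need not coincide with any $x_j$, because $\argmax_{x\in P_k}[\bm{a}_{k+1}]_i^Tx$ can be a nontrivial face. What holds in general is the equality of values $[\bm{a}_{k+1}]_i^Ty_i=[\bm{a}_{k+1}]_i^Tx_j$, which is exactly the paper's argument. Coincidence of points only follows under your contradiction hypothesis, via Theorem \ref{theorem:interior:vertices}.
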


\begin{proof}
Let $P_k\in\mc{P}^\flat_{\bm{a}_k}$.
Then there exists $\bm{x}_k\in\mc{C}^\flat_{\bm{a}_k}$ with $P_k=\co\{\bm{x}_k\}$.
By part a) of Theorem \ref{construction:site},
we have $\bm{x}_k\in\pi^\flat_{\bm{a}_k}(\co\{\bm{x}_k\})$,
and since $[\bm{a}_{k+1}]_i=[\bm{a}_k]_i$ for $i\in\{1,\ldots,N_k\}$,
this means that
\begin{equation}\label{local:project:back}
[\bm{x}_k]_i\in\argmax_{x\in\co\{\bm{x}_k\}}[\bm{a}_{k+1}]_i^Tx
\quad\forall\,i\in\{1,\ldots,N_k\}.
\end{equation}
In addition, we select
\begin{equation}\label{local:extend}
x_i\in\argmax_{x\in\co\{\bm{x}_k\}}[\bm{a}_{k+1}]_i^Tx
\quad\forall\,i\in\{N_k+1,\ldots,N_{k+1}\}
\end{equation}
and define
\begin{equation}\label{loc:construct:x_k+1}
\bm{x}_{k+1}:=([\bm{x}_k]_1,\ldots,[\bm{x}_k]_{N_k},x_{N_k+1},\ldots,x_{N_{k+1}}).
\end{equation}
By \eqref{local:project:back}, \eqref{local:extend} and \eqref{loc:construct:x_k+1},
we have $\{\bm{x}_k\}\subset\{\bm{x}_{k+1}\}\subset\co\{\bm{x}_k\}$,
and since $\co$ 
is monotone w.r.t.\ 
inclusion, we obtain
$\co\{\bm{x}_k\}\subset\co\{\bm{x}_{k+1}\}\subset\co\{\bm{x}_k\}$
and hence
\begin{equation}\label{loc:P_k:from:x_k+1}
P_k=\co\{\bm{x}_k\}=\co\{\bm{x}_{k+1}\}.
\end{equation}
Furthermore, statements \eqref{local:project:back},
\eqref{local:extend} and \eqref{loc:construct:x_k+1}
show $\bm{x}_{k+1}\in\pi^\flat_{\bm{a}_{k+1}}(P_k)$,
which implies $\bm{x}_{k+1}\in\mc{C}^\flat_{\bm{a}_{k+1}}$.
With \eqref{loc:P_k:from:x_k+1} we obtain
$P_k=\co\{\bm{x}_{k+1}\}\in\Pi^\flat_{\bm{a}_{k+1}}(P_k)$,
so a) holds.
Statement b) is an immediate consequence of a).

\medskip

We continue the above argument.
By \eqref{local:extend} for every $i\in\{N_k+1,\ldots,N_{k+1}\}$
there exists $j\in\{1,\ldots,N_k\}$ with
\[[\bm{a}_{k+1}]_i^T[\bm{x}_{k+1}]_j=[\bm{a}_{k+1}]_i^T[\bm{x}_k]_j
=[\bm{a}_{k+1}]_i^Tx_i=[\bm{a}_{k+1}]_i^T[\bm{x}_{k+1}]_i.\]
Hence by Theorem \ref{theorem:int:CA},
we have $\bm{x}_{k+1}\in\partial\mc{C}^\flat_{\bm{a}_{k+1}}$.
\end{proof}

If a Galerkin sequence is nested, then approximations from the Galerkin polytope spaces
can be chosen in a nested way without sacrificing accuracy.

\begin{lemma}
If $(\bm{a}_k)_k$ is a nested Galerkin sequence, then for any $K\in\mc{K}^d$,
there exist $\bm{x}_k\in\pi^\flat_{\bm{a}_k}(K)$ such that for all $k\in\N$ we have
\begin{equation}\label{nested:xk}
\bm{x}_{k+1}=([\bm{x}_k]_1,\ldots,[\bm{x}_k]_{N_k},x_{N_k+1},\ldots,x_{N_{k+1}})
\end{equation}
and $P_k:=\co\{\bm{x}_k\}$
satisfy $P_k\in\Pi^\flat_{\bm{a}_k}(K)$,
$P_k\subset P_{k+1}\subset K$ and
\begin{equation}\label{loc:estimate}
\dist_H(P_k,K)\le2\delta_{\bm{a}_k}\|K\|.
\end{equation}
\end{lemma}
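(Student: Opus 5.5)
The plan is to build the sequence $(\bm{x}_k)_k$ recursively, exploiting the product structure of $\pi^\flat_{\bm{a}_k}(K)$.

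For $k=1$, pick any $\bm{x}_1\in\pi^\flat_{\bm{a}_1}(K)$; this set is nonempty by Theorem \ref{thm:basic:properties:projectors}.

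Given $\bm{x}_k\in\pi^\flat_{\bm{a}_k}(K)$, nestedness gives $[\bm{a}_{k+1}]_i=[\bm{a}_k]_i$ for $i\le N_k$. Hence the first $N_k$ components satisfy
\[[\bm{x}_k]_i\in\argmax_{x\in K}[\bm{a}_{k+1}]_i^Tx .\]
For the remaining indices $i\in\{N_k+1,\ldots,N_{k+1}\}$, select arbitrary
\[x_i\in\argmax_{x\in K}[\bm{a}_{k+1}]_i^Tx ,\]
which is possible because these faces of $K$ are nonempty. Define $\bm{x}_{k+1}$ by \eqref{nested:xk}. By definition \eqref{def:pi}, which is a componentwise Cartesian product condition, we get $\bm{x}_{k+1}\in\pi^\flat_{\bm{a}_{k+1}}(K)$. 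This is the same extension trick used in the proof of part a) of the preceding proposition, only with $K$ in place of $\co\{\bm{x}_k\}$.

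The remaining properties then follow in order:
\begin{itemize}
\item $P_k=\co\{\bm{x}_k\}\in\Pi^\flat_{\bm{a}_k}(K)$ holds by \eqref{def:Pi}.
\item $P_{k+1}\subset K$ follows from Lemma \ref{lemma:subset}.
\item By \eqref{nested:xk}, $\{\bm{x}_k\}\subset\{\bm{x}_{k+1}\}$, and monotonicity of $\co$ under inclusion gives $P_k\subset P_{k+1}$.
\item For the estimate \eqref{loc:estimate}, apply Theorem \ref{thm:approximation} to $P_k\in\Pi^\flat_{\bm{a}_k}(K)$. It gives $\dist(P_k,K)=0$ and $\dist(K,P_k)\le2\delta_{\bm{a}_k}\|K\|$, and taking the maximum yields the Hausdorff bound.
\end{itemize}

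No step is really an obstacle. The only point needing care is verifying that the inherited components $[\bm{x}_k]_i$ for $i\le N_k$ are still maximizers for $[\bm{a}_{k+1}]_i$ over $K$. This is immediate because the directions coincide and the maximization is over the same set $K$, unlike the previous proposition, where the set changed. Formally the recursion produces the whole sequence via induction, or dependent choice, on $k$.
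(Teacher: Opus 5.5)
Your proposal is correct and matches the paper's proof essentially step for step: build the sequence recursively, keep the first $N_k$ components, fill the new indices with arbitrary maximizers over $K$, then get $P_k\subset P_{k+1}\subset K$ from monotonicity of $\co$ together with Lemma \ref{lemma:subset}, and get the estimate from Theorem \ref{thm:approximation}. The only difference is that the paper starts the recursion at $k=0$ rather than $k=1$, which is just an indexing convention.
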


\begin{proof}
We construct the sequences ($\bm{x}_k)_k$ and $(P_k)_k$ recursively.
Take any $\bm{x}_0\in\pi^\flat_{\bm{a}_0}(K)$.
Then $P_0=\co\{\bm{x}_0\}\in\Pi^\flat_{\bm{a}_0}(K)$.
By Theorem \ref{thm:approximation} the set $P_0$ satisfies \eqref{loc:estimate} with $k=0$.

Now assume that $(\bm{x}_j)_{j=0}^k$ and $(P_j)_{j=0}^k$ with the required properties
have already been constructed.
For every $i\in\{N_k+1,\ldots,N_{k+1}\}$ take arbitrary
$x_i\in\argmax_{x\in K}[\bm{a}_{k+1}]_i^Tx$
to complete \eqref{nested:xk}.
Then $\bm{x}_{k+1}\in\pi^\flat_{\bm{a}_{k+1}}(K)$.
Because of $\{\bm{x}_k\}\subset\{\bm{x}_{k+1}\}\subset K$,
it follows that $\co\{\bm{x}_k\}\subset\co\{\bm{x}_{k+1}\}\subset K$
and hence $P_k\subset P_{k+1}\subset K$, as desired.
Since $P_{k+1}\in\Pi^\flat_{\bm{a}_{k+1}}(K)$,
Theorem \ref{thm:approximation} applies and yields \eqref{loc:estimate} with $k+1$
in lieu of $k$.
\end{proof}

\section{Global optimization in $\mc{K}^d$}
\label{sec:optimization}

We equip the space
\[\mc{B}:=\{\mc{M}\subset\mc{K}^d:\mc{M}\neq\emptyset,\ \sup_{K\in\mc{M}}\dist_H(K,\{0\})<\infty\}\]
of nonempty bounded subsets of $(\mc{K}^d,\dist_H)$
with the Hausdorff semi-distance and the Hausdorff-distance given by
\begin{align*}
&\mc{D}:\mc{B}\times\mc{B}\to\R_+,\quad
\mc{D}(\mc{M},\tilde{\mc{M}})
:=\sup_{K\in\mc{M}}\inf_{\tilde{K}\in\tilde{\mc{M}}}\dist_H(K,\tilde{K}),\\
&\mc{D}_H:\mc{B}\times\mc{B}\to\R_+,\
\mc{D}_H(\mc{M},\tilde{\mc{M}})
:=\max\{\mc{D}(\mc{M},\tilde{\mc{M}}),\mc{D}(\tilde{\mc{M}},\mc{M})\}.
\end{align*}

The following result is \cite[Theorem 44]{Rieger:Galerkin}.
A related statement can e.g.\ be found in \cite[Theorem 2]{Antunes}.
It states that under generic conditions, the global minimizers of approximate
optimization problems converge to global minimizers of the original problem.

\begin{theorem} \label{minconvthm}
Let $\mc{M}\in\mc{B}$ be compact,
and let $(\mc{M}_k)_{k=0}^\infty$ be a sequence of nonempty compact subsets
$\mc{M}_k\in\mc{B}$ with
\begin{align} \label{setconv}
\lim_{k\to\infty}\mc{D}_H(\mc{M},\mc{M}_k)=0.
\end{align}
Let $\Phi:\mc{K}^d\to\R$ be continuous, let $(\Phi_k)_{k=0}^\infty$
with $\Phi_k:\mc{M}_k\to\R$ be a sequence of mappings satisfying
\begin{align} \label{uniapprox}
\lim_{k\to\infty}\sup_{K\in\mc{M}_k}|\Phi(K)-\Phi_k(K)|=0,
\end{align}
and let $\argmin_{K\in\mc{M}_k}\Phi_k(K)\neq\emptyset$ for all $k$.
Then $\argmin_{K\in\mc{M}}\Phi(K)\neq\emptyset$, and
\begin{equation} \label{desiredconv}
\lim_{k\to\infty}\mc{D}(\argmin_{K\in\mc{M}_k}\Phi_k(K),
\argmin_{K\in\mc{M}}\Phi(K))=0.
\end{equation}
\end{theorem}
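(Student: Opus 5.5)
The plan is the standard compactness argument in the metric space $(\mc{K}^d,\dist_H)$. Closed and bounded subsets of this space are compact by the Blaschke selection theorem, but we will only use compactness of $\mc{M}$, which is assumed.

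\emph{Step 1: existence of a minimizer.} Since $\mc{M}$ is compact and $\Phi$ is continuous on $\mc{K}^d$, its restriction to $\mc{M}$ attains its minimum. Hence $\argmin_{K\in\mc{M}}\Phi(K)\neq\emptyset$. This set is also closed in $\mc{M}$, so it is compact, and its minimal value $m:=\min_{\mc{M}}\Phi$ is well defined.

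\emph{Step 2: the minimal values converge, $m_k:=\min_{\mc{M}_k}\Phi_k\to m$.} For the upper bound, fix $K^*\in\argmin_{\mc{M}}\Phi$. By \eqref{setconv} there are $K_k\in\mc{M}_k$ with $\dist_H(K_k,K^*)\to0$. Then
\[m_k\le\Phi_k(K_k)\le\Phi(K_k)+\sup_{\mc{M}_k}|\Phi-\Phi_k|,\]
and the right-hand side tends to $\Phi(K^*)=m$ by continuity of $\Phi$ and \eqref{uniapprox}. For the lower bound, take $L_k\in\argmin_{\mc{M}_k}\Phi_k$. By \eqref{setconv} there are $\tilde L_k\in\mc{M}$ with $\dist_H(L_k,\tilde L_k)\to0$. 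Then
\[m_k=\Phi_k(L_k)\ge\Phi(L_k)-o(1)=\Phi(\tilde L_k)+\big(\Phi(L_k)-\Phi(\tilde L_k)\big)-o(1).\]

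\emph{The main obstacle} is that $\Phi$ is only continuous, not uniformly continuous, so $\Phi(L_k)-\Phi(\tilde L_k)\to0$ is not automatic. The fix is to show that $\mc{M}\cup\bigcup_k\mc{M}_k$ is relatively compact, since $\Phi$ is uniformly continuous on a compact set. Take any sequence $L_{k_j}\in\mc{M}_{k_j}$. If $k_j\to\infty$, then $\tilde L_{k_j}$ lies in the compact set $\mc{M}$, so a subsequence converges to some $L\in\mc{M}$, and then $L_{k_j}\to L$ along that subsequence as well. If the indices $k_j$ stay bounded, some $\mc{M}_k$ contains infinitely many terms and is compact, so again a subsequence converges. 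Hence the closure $\mc{U}$ of $\mc{M}\cup\bigcup_k\mc{M}_k$ is compact, and $\Phi$ is uniformly continuous on $\mc{U}$. It follows that $\liminf_k m_k\ge m$.

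\emph{Step 3: the minimizers converge, proved by contradiction.} Suppose \eqref{desiredconv} fails. Then there are $\eps>0$, a subsequence, and $L_k\in\argmin_{\mc{M}_k}\Phi_k$ with $\inf_{K\in\argmin_{\mc{M}}\Phi}\dist_H(L_k,K)\ge\eps$. By the compactness argument of Step 2, a further subsequence satisfies $L_k\to L$ with $L\in\mc{M}$; here $L\in\mc{M}$ because $\dist_H(L_k,\mc{M})\to0$ and $\mc{M}$ is closed. Continuity of $\Phi$, \eqref{uniapprox} and Step 2 give
\[\Phi(L)=\lim_k\Phi(L_k)=\lim_k\Phi_k(L_k)=\lim_k m_k=m,\]
so $L\in\argmin_{\mc{M}}\Phi$. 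But then $\dist_H(L_k,L)\to0$, which contradicts the $\eps$-separation. This proves \eqref{desiredconv}.
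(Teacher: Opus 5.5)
Your proof is correct and complete. Step 2 really does need some compactness to control $\Phi(L_k)-\Phi(\tilde L_k)$, since $\Phi$ is only continuous, and your relative-compactness argument for $\mc{M}\cup\bigcup_k\mc{M}_k$ supplies it. Step 3 then closes correctly using $m_k\to m$, closedness of $\mc{M}$, and \eqref{uniapprox}.

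The paper gives no proof of this statement; it quotes it from \cite[Theorem 44]{Rieger:Galerkin}. There is therefore no in-paper route to compare your argument against, and yours stands as a valid self-contained proof.

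One optional streamlining is available. In both Step 2 and Step 3 you only use sequences $L_k\in\mc{M}_k$ with $k\to\infty$. So you can replace uniform continuity on the closure of the union by a plain subsequence argument. Every subsequence of $(\tilde L_k)\subset\mc{M}$ has a further subsequence converging to some $L\in\mc{M}$. Along it, $L_k\to L$ as well, so $\Phi(L_k)-\Phi(\tilde L_k)\to0$ along it, and hence along the whole sequence.

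This removes the bounded-index case, which is the only place you used compactness of the individual $\mc{M}_k$. For the upper bound, the $K_k$ can be chosen with a $1/k$ slack instead of by attainment. So the conclusion needs only the assumed nonemptiness of $\argmin_{\mc{M}_k}\Phi_k$, not compactness of the $\mc{M}_k$.
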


In the following we analyze a model problem.
Let $\Phi:(\mc{K}^d,\dist_H)\to\R$ be continuous
and let $\Psi:(\mc{K}^d,\dist_H)\to(\R^m,\|\cdot\|_\infty)$ be Lipschitz on bounded
subcollections of $\mc{K}^d$.
Let $\hat{K}\in\mc{K}^d$, assume that
\[\mc{M}:=\{K\in\mc{K}^d:\Psi(K)\le 0,\ K\subset\hat{K}\}\neq\emptyset,\]
where the inequality is understood componentwise, and consider the problem
\begin{equation} \label{aop}
\min_{K\in\mc{K}^d}\Phi(K)\quad\text{subject to}\quad \Psi(K)\le 0,\ K\subset\hat{K}.
\end{equation}

We recall Blaschke's selection theorem, which is \cite[Theorem 1.8.7]{Schneider}.

\begin{theorem}\label{Blaschke}
Let $\{K_k\}_{k\in\N}\subset\mc{K}^d$
and $R>0$ such that $\|K_k\|\le R$ for all $k$.
Then there exist a subsequence $\N'\subset\N$ and $K^*\in\mc{K}^d$ with
\[\lim_{\N'\ni k\to\infty}\dist_H(K_k,K^*)=0.\]
\end{theorem}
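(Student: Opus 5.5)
The statement to prove is Blaschke's selection theorem: every sequence of convex bodies in $\R^d$ with uniformly bounded size has a $\dist_H$-convergent subsequence whose limit is again a convex body. The plan is a compactness-and-completeness argument: first show the bounded family is totally bounded in $(\mc{C}^d,\dist_H)$, then show the relevant limit exists and lies in $\mc{K}^d$.

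\emph{Reduction.} Write $B:=B_R(0)$. Every $K_k$ lies in $\mc{C}(B)$, the nonempty compact subsets of $B$. It therefore suffices to show two things.

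First, $(\mc{C}(B),\dist_H)$ is a compact metric space. For this we prove completeness and total boundedness.

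\emph{Total boundedness.} Fix $\eps>0$ and a finite $\eps$-net $F\subset B$. Each nonempty compact $K\subset B$ is within Hausdorff distance $\eps$ of the nonempty set $\{f\in F:\dist(f,K)\le\eps\}$. Hence the finitely many nonempty subsets of $F$ form an $\eps$-net of $\mc{C}(B)$.

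\emph{Completeness.} Take a Cauchy sequence $(K_k)_k$ and define
\[K^*:=\bigcap_{n}\overline{\bigcup_{k\ge n}K_k}.\]
This is a decreasing intersection of nonempty compact sets, so $K^*$ is nonempty and compact. To show $\dist_H(K_k,K^*)\to0$ we bound both semi-distances.

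\textbf{Bound on $\dist(K^*,K_k)$.} Any $x\in K^*$ is a limit of points of $K_{k_j}$ with $k_j\to\infty$. The Cauchy property then gives $\dist(x,K_k)\le\eps$ for all large $k$.

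\textbf{Bound on $\dist(K_k,K^*)$.} This is the main obstacle. Given $x\in K_k$, choose indices $k=m_0<m_1<\dots$ with $\dist_H(K_{m_j},K_{m_{j+1}})\le\eps 2^{-j}$. Then build points $x_j\in K_{m_j}$ with $\|x_j-x_{j+1}\|\le\eps2^{-j}$. This chain is Cauchy, so it converges to some point of $K^*$ within distance $2\eps$ of $x$.

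Second, convexity is preserved in the limit, so $\mc{K}^d\cap\mc{C}(B)$ is closed. Suppose $K_k\to K^*$ with $K_k$ convex, and take $x,y\in K^*$ and $\lambda\in[0,1]$. Pick $x_k,y_k\in K_k$ with $x_k\to x$ and $y_k\to y$. Then $\lambda x_k+(1-\lambda)y_k\in K_k$ converges to $\lambda x+(1-\lambda)y$. The distance of this point to $K^*$ is at most $\lim_k\dist(K_k,K^*)=0$, and since $K^*$ is closed, the point lies in $K^*$.

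\emph{Conclusion.} A compact metric space is sequentially compact, so the bounded sequence $(K_k)_k\subset\mc{C}(B)$ has a subsequence indexed by some $\N'$ converging to some $K^*\in\mc{C}(B)$. By the convexity step, $K^*\in\mc{K}^d$.

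Alternatively, the whole argument could go through support functions, using \eqref{bound:support:function:vector}, \eqref{bound:distance} and Arzelà–Ascoli. The sequence $\sigma(K_k,\cdot)$ restricted to $S^{d-1}$ is uniformly bounded and $R$-Lipschitz, so Arzelà–Ascoli yields a uniformly convergent subsequence. The limit is a pointwise limit of sublinear functions, hence sublinear, and therefore the support function of a unique $K^*\in\mc{K}^d$. By \eqref{bound:distance}, uniform convergence on $S^{d-1}$ is exactly $\dist_H(K_k,K^*)\to0$ along the subsequence. This route avoids the chain argument, at the cost of citing the correspondence between sublinear functions and convex bodies.
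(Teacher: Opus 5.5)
The paper gives no proof of this statement; it is quoted from Schneider's book (Theorem 1.8.7 there). Your argument is correct and is essentially the standard proof behind that citation:
\begin{itemize}
\item completeness of the nonempty compact sets under $\dist_H$, using the limit set $\bigcap_n\overline{\bigcup_{k\ge n}K_k}$ and the dyadic chain for the harder semi-distance $\dist(K_k,K^*)$;
\item a compactness step;
\item closedness of convexity under Hausdorff limits.
\end{itemize}
The difference from the textbook presentation is cosmetic. You obtain total boundedness from a finite $\eps$-net and then use that a complete, totally bounded metric space is compact. Schneider instead dissects an enclosing cube dyadically and extracts a Cauchy subsequence by a diagonal argument, which is the same idea made explicit. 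What your version buys is a self-contained section, and slightly more than stated: compactness of all nonempty compact subsets of the ball, with the convex ones forming a closed subset.

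Your alternative via support functions is also valid and fits the paper's toolkit well. The estimate \eqref{bound:support:function:vector} gives the uniform bound and equi-Lipschitz property needed for Arzel\`a--Ascoli, and \eqref{bound:distance} turns uniform convergence on $S^{d-1}$ into $\dist_H$-convergence. One small precision: extend the limit positively homogeneously, so that convergence holds pointwise on all of $\R^d$ before you call the limit sublinear. The cost of this route is that the existence of $K^*$ rests on the theorem that every sublinear function on $\R^d$ is the support function of a unique convex body. That is a separation-type result, no more elementary than the chain argument it replaces.
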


The condition $\|K_k\|\le R$ for all $k$ is equivalent with the existence
of a set $K\in\mc{K}^d$ such that $K_k\subset K$ for all $k$.

\begin{lemma}\label{lem:m:compact}
The collection $\mc{M}$ is sequentially compact.
\end{lemma}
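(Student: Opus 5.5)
Take an arbitrary sequence $(K_k)_k\subset\mc{M}$. Since $K_k\subset\hat{K}$ for all $k$, we have $\|K_k\|\le\|\hat{K}\|=:R$. Blaschke's selection theorem (Theorem \ref{Blaschke}) therefore yields a subsequence $\N'\subset\N$ and a limit $K^*\in\mc{K}^d$ with $\lim_{\N'\ni k\to\infty}\dist_H(K_k,K^*)=0$. It remains to show $K^*\in\mc{M}$, which means checking the two constraints separately.

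\emph{The inclusion $K^*\subset\hat{K}$.} Let $x\in K^*$. By Hausdorff convergence there are points $x_k\in K_k\subset\hat{K}$ with $\|x-x_k\|\le\dist(K^*,K_k)\to0$ along $\N'$. Since $\hat{K}$ is closed, $x\in\hat{K}$. Alternatively, $\dist(K^*,\hat K)\le\dist(K^*,K_k)+\dist(K_k,\hat K)=\dist(K^*,K_k)\to0$, and closedness of $\hat K$ gives the inclusion.

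\emph{The constraint $\Psi(K^*)\le0$.} All $K_k$ and $K^*$ lie in the bounded subcollection $\{K\in\mc{K}^d:K\subset\hat{K}\}$, on which $\Psi$ is Lipschitz with some constant $L$. Hence
\[\|\Psi(K^*)-\Psi(K_k)\|_\infty\le L\,\dist_H(K^*,K_k)\to0,\]
so $\Psi(K_k)\to\Psi(K^*)$ along $\N'$. Since the closed orthant $\{y\in\R^m:y\le0\}$ is closed and $\Psi(K_k)\le0$ for all $k$, we obtain $\Psi(K^*)\le0$.

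Combining both constraints gives $K^*\in\mc{M}$, which proves sequential compactness. The only point needing some care is justifying that $K^*$ lies in the same bounded subcollection, so that the Lipschitz property of $\Psi$ applies. This follows from the inclusion step, which is why that step is carried out first; mere continuity of $\Psi$ would in fact suffice for the limit argument.
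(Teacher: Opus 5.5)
Your proof is correct and follows essentially the same route as the paper. You apply Blaschke's selection theorem, get $\Psi(K^*)\le0$ from continuity of $\Psi$, and obtain $K^*\subset\hat K$ from $\dist(K^*,\hat K)\le\dist(K^*,K_k)\to0$. You also make explicit two facts the paper uses tacitly: the bound $\|K_k\|\le\|\hat K\|$ needed for Blaschke, and the closedness of $\hat K$.
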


\begin{proof}
Let $\{K_k\}_{k\in\N}\subset\mc{M}$.
By Blaschke's selection theorem, there exist $\N'\subset\N$ and $K^*\in\mc{K}^d$
such that
\[\lim_{\N'\ni k\to\infty}\dist_H(K_k,K^*)=0.\]
Since $\Psi$ is continuous, we have $\Psi(K^*)\le0$.
Since $K_k\subset\hat{K}$, we also have $\dist(K_k,\hat{K})=0$.
Hence for all $k$ we obtain
\[\dist(K^*,\hat{K})
\le\dist(K^*,K_k)+\dist(K_k,\hat{K})
\le\dist_H(K^*,K_k),\]
which implies $\dist(K^*,\hat{K})=0$ and hence $K^*\subset\hat{K}$.
\end{proof}

To approximate solutions of \eqref{aop} we fix a nested Galerkin sequence $(\bm{a}_k)$
with $\bm{a}_k\in\R^{d\times N_k}$ for all $k$.
We will need an elementary technical result.

\begin{figure}
\centering

\begin{tikzpicture}[scale=0.8]
\useasboundingbox (-2.4,-2.15) rectangle (2.7,2.15);

\def\RL{2}
\def\RK{1.4}
\def\rr{0.6}
\def\cL{1.15}
\def\RLp{0.45}
\def\RBr{1.05} 

\coordinate (O) at (0,0);
\coordinate (C) at (\cL,0);

\draw[thick] (O) circle (\RL);
\node[above left] at (-1.2,1.35) {$L$};

\fill[blue!10] (O) circle (\RK);
\draw[thick] (O) circle (\RK);
\node[left] at (-0.35,.8) {$K$};

\begin{scope}
  \clip (O) circle (\RK);
  \fill[blue!35] (C) circle (\RBr);
\end{scope}

\draw[dashed,thick] (C) circle (\RBr);
\node[above right] at (2,0.3) {$B_r(L')$};

\fill[blue!55] (C) circle (\RLp);
\draw[thick] (C) circle (\RLp);
\node at (C) {$L'$};

\draw[thick] (O) circle (\RK);

\node at (.55,.5) {$K'$};

\draw[|-|,thick] (-\RL,0)--(-\RK,0)
  node[midway,above] {$r$};

\draw[|-|,thick]
  ({\cL+\RLp},0)--({\cL+\RBr},0)
  node[midway,below] {$r$};

\begin{scope}
  \clip (O) circle (\RK);
  \draw[red,thick] (C) circle (\RBr);
\end{scope}

\begin{scope}
  \clip (C) circle (\RBr);
  \draw[red,thick] (O) circle (\RK);
\end{scope}

\end{tikzpicture}
\caption{Depicts the situation from Lemma \ref{lem:elementary}.
The set $K'$ is the region with red border.}
\end{figure}

\begin{lemma}\label{lem:elementary}
Let $K,L\in\mc{K}^d$ with $K\subset L$ and denote $r:=\dist(L,K)$.
Then for every $L'\in\mc{K}^d$ with $L'\subset L$
the set $K':=B_r(L')\cap K$ satisfies
\[K'\in\mc{K}^d,\quad K'\subset K\quad\text{and}\quad\dist_H(K',L')\le\dist(L,K).\]
\end{lemma}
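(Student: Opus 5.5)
The plan is to verify the three claims in turn, with the Hausdorff estimate carrying all the real content. Here $B_r(L')$ means the closed $r$-neighbourhood $\{x\in\R^d:\dist(x,L')\le r\}$. It is convex and compact because $L'$ is, being the Minkowski sum $L'+B_r(0)$ of two convex compact sets.

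For $K'\in\mc{K}^d$, the set $K'=B_r(L')\cap K$ is an intersection of two convex compact sets, so it is convex and compact. Nonemptiness is the only point needing an argument. Take any $y\in L'$. Since $L'\subset L$, we have $\dist(y,K)\le\dist(L,K)=r$. Because $K$ is compact, there is $x\in K$ with $\|x-y\|\le r$, so $x\in B_r(L')\cap K$. The inclusion $K'\subset K$ holds by construction.

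For the Hausdorff bound we estimate both semi-distances.

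\begin{itemize}
\item \emph{First semi-distance.} Every $x\in K'$ lies in $B_r(L')$, so $\dist(x,L')\le r$. Hence $\dist(K',L')\le r$.
\item \emph{Second semi-distance.} For $y\in L'$, the point $x\in K$ built in the nonemptiness step satisfies $\|x-y\|\le r$. This gives $x\in K'$, so $\dist(y,K')\le r$. Taking the supremum over $y$ gives $\dist(L',K')\le r$.
\end{itemize}

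Combining the two, $\dist_H(K',L')\le r=\dist(L,K)$.

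The only step that takes any care is the second semi-distance. The point is that the nearest point in $K$ to $y\in L'$ automatically lies in the $r$-neighbourhood of $L'$, so it belongs to $K'$. This relies on the attained minimum from compactness and on the closed-ball convention for $B_r$. Note that the hypothesis $K\subset L$ is not needed for any of these inequalities, only $L'\subset L$.
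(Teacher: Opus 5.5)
Your proposal is correct and follows essentially the same argument as the paper: a nearest point in $K$ to each $y\in L'$ gives both $K'\neq\emptyset$ and $\dist(L',K')\le r$, and membership in $B_r(L')$ gives $\dist(K',L')\le r$. Two of your remarks are accurate points the paper leaves implicit: the argument needs the closed-ball convention for $B_r$, and the hypothesis $K\subset L$ is never used.
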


\begin{proof}
For every $y\in L'$, we have $y\in L$,
and since $K\in\mc{K}^d$ there exists $x\in K$ with $\|x-y\|\le r$.
This shows both $K'\neq\emptyset$ and $\dist(L',K')\le r$.
Conversely, for every $x\in K'$, by definition, there is $y\in L'$ with $x\in B_r(y)$.
Hence $\dist(K',L')\le r$.
Since $K,B_r(L')\in\mc{K}^d$, so is their intersection $K'$.
\end{proof}

Now we establish a first result concerning three finitely parameterized
approximations of the collection $\{K\in\mc{K}^d:K\subset\hat{K}\}$.
Depending on the application, one may be more natural than the others.
Recall the projectors $\pi^\sharp_{\bm{a}}$ and $\Pi^\sharp_{\bm{a}}$
from \eqref{sharp:1} and \eqref{sharp:2}.

\begin{lemma}\label{lem:sets:only}
Let $\hat{P}_k\in\Pi^\flat_{\bm{a}_k}(\hat{K})$.
Then the estimates
\begin{align}
&\mc{D}(\{K\in\mc{K}^d:K\subset\hat{K}\},\{P\in\mc{P}^\flat_{\bm{a}_k}:P\subset\hat{P}_k\})\le4\delta_{\bm{a}_k}\|\hat{K}\|,
\label{a1}\\
&\mc{D}(\{P\in\mc{P}^\flat_{\bm{a}_k}:P\subset\hat{P}_k\},
\{K\in\mc{K}^d:K\subset\hat{K}\})\le0,\label{a2}\\
&\mc{D}(\{K\in\mc{K}^d:K\subset\hat{K}\},
\{P\in\mc{P}^\flat_{\bm{a}_k}:P\subset\hat{K}\})\le2\delta_{\bm{a}_k}\|\hat{K}\|,\label{b1}\\
&\mc{D}(\{P\in\mc{P}^\flat_{\bm{a}_k}:P\subset\hat{K}\},
\{K\in\mc{K}^d:K\subset\hat{K}\})\le0,\label{b2}\\
&\mc{D}(\{K\in\mc{K}^d:K\subset\hat{K}\},
\{P\in\mc{P}^\flat_{\bm{a}_k}:P\subset\Pi^\sharp_{\bm{a}_k}(\hat{K})\})\le2\delta_{\bm{a}_k}\|\hat{K}\|,\label{c1}\\
&\mc{D}(\{P\in\mc{P}^\flat_{\bm{a}_k}:P\subset\Pi^\sharp_{\bm{a}_k}(\hat{K})\},
\{K\in\mc{K}^d:K\subset\hat{K}\})\le\frac{2-\delta_{\bm{a}_k}}{1-\delta_{\bm{a}_k}}\delta_{\bm{a}_k}\|\hat{K}\|\label{c2}
\end{align}
hold, where for estimate \eqref{c2} we require $\delta_{\bm{a}_k}\in(0,1)$.
\end{lemma}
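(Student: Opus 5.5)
The two "$\le0$" estimates \eqref{a2} and \eqref{b2} are immediate. If $P\subset\hat P_k$, then Lemma \ref{lemma:subset} gives $\hat P_k\subset\hat K$, so $P\subset\hat K$. Each such $P$ is therefore itself an element of $\{K\in\mc{K}^d:K\subset\hat K\}$, and its distance to that collection is $0$. The same observation handles \eqref{b2} directly.

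For the upper estimates I would use one construction. Let $K\subset\hat K$ be arbitrary and pick any $P\in\Pi^\flat_{\bm{a}_k}(K)$. Then $P\in\mc{P}^\flat_{\bm{a}_k}$ and $P\subset K\subset\hat K$ by Lemma \ref{lemma:subset}. Theorem \ref{thm:approximation} gives $\dist_H(P,K)\le2\delta_{\bm{a}_k}\|K\|\le2\delta_{\bm{a}_k}\|\hat K\|$. This proves \eqref{b1}. Theorem \ref{oldapprox} gives $\hat K\subset\Pi^\sharp_{\bm{a}_k}(\hat K)$, so the same $P$ lies in the set appearing in \eqref{c1}, which proves \eqref{c1}.

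For \eqref{a1} the difficulty is that $P$ must lie inside $\hat P_k$, not merely inside $\hat K$. Here I would use Lemma \ref{lem:elementary} with $L=\hat K$, the set $K$ there replaced by $\hat P_k$, and $L'=K$. This gives $r=\dist(\hat K,\hat P_k)\le2\delta_{\bm{a}_k}\|\hat K\|$ by Theorem \ref{thm:approximation}. It also gives $K':=B_r(K)\cap\hat P_k\in\mc{K}^d$ with $K'\subset\hat P_k$ and $\dist_H(K',K)\le r$. Next take $P\in\Pi^\flat_{\bm{a}_k}(K')$. Then $P\subset K'\subset\hat P_k$, and $\dist_H(P,K')\le2\delta_{\bm{a}_k}\|K'\|\le2\delta_{\bm{a}_k}\|\hat K\|$ because $K'\subset\hat K$. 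The triangle inequality yields $\dist_H(P,K)\le4\delta_{\bm{a}_k}\|\hat K\|$. This step is the main obstacle, and Lemma \ref{lem:elementary} is exactly the tool for it.

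For \eqref{c2}, let $P\in\mc{P}^\flat_{\bm{a}_k}$ with $P\subset\Pi^\sharp_{\bm{a}_k}(\hat K)$. I apply Lemma \ref{lem:elementary} with $L=\Pi^\sharp_{\bm{a}_k}(\hat K)$, the set $K$ there replaced by $\hat K$ (valid since $\hat K\subset L$ by Theorem \ref{oldapprox}), and $L'=P$. This produces $K'\subset\hat K$, $K'\in\mc{K}^d$, with
$\dist_H(K',P)\le\dist(\Pi^\sharp_{\bm{a}_k}(\hat K),\hat K)\le\frac{2-\delta_{\bm{a}_k}}{1-\delta_{\bm{a}_k}}\delta_{\bm{a}_k}\|\hat K\|$
by Theorem \ref{oldapprox}. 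That is the claimed bound.
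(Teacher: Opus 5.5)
Your proposal is correct and follows the paper's proof essentially step for step. You handle \eqref{a2} and \eqref{b2} via $\hat P_k\subset\hat K$, and \eqref{b1} and \eqref{c1} via a single $P\in\Pi^\flat_{\bm{a}_k}(K)$ together with $\hat K\subset\Pi^\sharp_{\bm{a}_k}(\hat K)$; you obtain \eqref{a1} and \eqref{c2} from Lemma \ref{lem:elementary} with exactly the paper's substitutions $(\hat P_k,\hat K,K)$ and $(\hat K,\Pi^\sharp_{\bm{a}_k}(\hat K),P)$ for $(K,L,L')$, followed in \eqref{a1} by the same projection and triangle-inequality step.
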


\begin{proof}
By Lemma \ref{lemma:subset} and Theorem \ref{thm:approximation} 
all $K\in\mc{K}^d$ and $P\in\Pi^\flat_{\bm{a}}(K)$ satisfy
\begin{equation}\label{lemthm}
P\subset K,\ \dist(P,K)=0,\ \text{and}\ \dist(K,P)\le2\delta_{\bm{a}}\|K\|.
\end{equation}
We will occasionally use these facts without mention.

\medskip

We have $\hat{P}_k\subset\hat{K}$.
Let $K\in\mc{K}^d$ with $K\subset\hat{K}$.
Applying Lemma \ref{lem:elementary} with $(\hat{P}_k,\hat{K},K)$ in lieu of $(K,L,L')$
yields $L\in\mc{K}^d$ (in lieu of $K'$) with $L\subset\hat{P}_k$ and
\[\dist_H(K,L)\le\dist(\hat{K},\hat{P}_k)\le2\delta_{\bm{a}_k}\|\hat{K}\|.\]
But for any $P\in\Pi^\flat_{\bm{a}_k}(L)$ we have $P\in\mc{P}^\flat_{\bm{a}_k}$
as well as $P\subset L\subset\hat{P}_k$ and
\[\dist_H(L,P)\le2\delta_{\bm{a}_k}\|L\|\le2\delta_{\bm{a}_k}\|\hat{K}\|.\]
Combining above estimates yields $\dist_H(K,P)\le4\delta_{\bm{a}_k}\|\hat{K}\|$
and hence \eqref{a1}.
Since every $P\in\mc{P}^\flat_{\bm{a}_k}$ with $P\subset\hat{P}_k$
satisfies $P\in\mc{K}^d$ and $P\subset\hat{K}$, inequality \eqref{a2} holds.

\medskip

Estimate \eqref{b1} follows directly from \eqref{lemthm} for any
$P\in\Pi^\flat_{\bm{a}_k}(K)$, and inequality \eqref{b2} is trivial.

\medskip

Let $K\in\mc{K}^d$ with $K\subset\hat{K}$, and let $P\in\Pi^\flat_{\bm{a}_k}(K)$.
Then $P\in\mc{P}^\flat_{\bm{a}_k}$, by \eqref{lemthm} and Theorem \ref{oldapprox}
we have $P\subset K\subset\hat{K}\subset\Pi^\sharp_{\bm{a}_k}(\hat{K})$,
and by \eqref{lemthm} we have $\dist_H(K,P)\le2\delta_{\bm{a}_k}\|K\|\le2\delta_{\bm{a}_k}\|\hat{K}\|$,
which shows \eqref{c1}.

\medskip

Let $P\in\mc{P}^\flat_{\bm{a}_k}$ with $P\subset\Pi^\sharp_{\bm{a}_k}(\hat{K})$.
By Theorem \ref{oldapprox} we have $\hat{K}\subset\Pi^\sharp_{\bm{a}_k}(\hat{K})$.
Hence by Lemma \ref{lem:elementary} with $(\hat{K},\Pi^\sharp_{\bm{a}_k}(\hat{K}),P)$
in lieu of $(K,L,L')$ and Theorem \ref{oldapprox},
there exists $K\in\mc{K}^d$ (in lieu of $K'$) with $K\subset\hat{K}$ and
\[\dist_H(P,K)\le\dist(\Pi^\sharp_{\bm{a}_k}(\hat{K}),\hat{K})
\le\frac{2-\delta_{\bm{a}_k}}{1-\delta_{\bm{a}_k}}\delta_{\bm{a}_k}\|\hat{K}\|.\]
Since $P$ was arbitrary, this shows \eqref{c2}.
\end{proof}

We establish that the constraint $\Psi$ is $\ell$-Lipschitz on the domain
of interest.

\begin{lemma}\label{lem:Psi:Lipschitz}
Assume that $\delta_{\bm{a}_k}\in(0,1)$ for all $k$, and let
$\hat{P}_k\in\Pi^\flat_{\bm{a}_k}(\hat{K})$ for all $k$.
Then there exists $\ell>0$ such that $\Psi$ is $\ell$-Lipschitz on
the union of $\mc{M}$ with
\[\cup_{k\in\N}
(\{P\in\mc{P}^\flat_{\bm{a}_k}:P\subset\hat{P}_k\}
\cup\{P\in\mc{P}^\flat_{\bm{a}_k}:P\subset\hat{K}\}
\cup\{P\in\mc{P}^\flat_{\bm{a}_k}:P\subset\Pi^\sharp_{\bm{a}_k}(\hat{K})\}).\]
\end{lemma}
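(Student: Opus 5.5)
The strategy is to show that every set in the indicated union lies inside one fixed bounded subcollection of $\mc{K}^d$. The standing hypothesis that $\Psi$ is Lipschitz on bounded subcollections then supplies a single constant $\ell$. Concretely, I will find $R>0$ such that $\|K\|\le R$ for every $K$ in $\mc{M}$ and in each of the three families, for every $k$. Then I take $\ell$ to be the Lipschitz constant of $\Psi$ on the bounded collection $\{K\in\mc{K}^d:\|K\|\le R\}$; this collection is bounded since $\dist_H(K,\{0\})=\|K\|$. Restricting $\Psi$ to any subcollection keeps it $\ell$-Lipschitz, which gives the claim.

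The bounds are obtained family by family.

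First, every $K\in\mc{M}$ satisfies $K\subset\hat{K}$, so $\|K\|\le\|\hat{K}\|$.

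Second, for the family $\{P\in\mc{P}^\flat_{\bm{a}_k}:P\subset\hat{P}_k\}$, Lemma \ref{lemma:subset} gives $\hat{P}_k\subset\hat{K}$. Hence each such $P$ satisfies $\|P\|\le\|\hat{K}\|$.

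Third, the family $\{P\in\mc{P}^\flat_{\bm{a}_k}:P\subset\hat{K}\}$ obeys the same bound trivially.

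Fourth, the family $\{P\in\mc{P}^\flat_{\bm{a}_k}:P\subset\Pi^\sharp_{\bm{a}_k}(\hat{K})\}$ is the one real point, because it is not a priori contained in $\hat{K}$. Here I use Theorem \ref{oldapprox}, which applies since $\delta_{\bm{a}_k}\in(0,1)$. Every point of $\Pi^\sharp_{\bm{a}_k}(\hat{K})$ is within $\frac{2-\delta_{\bm{a}_k}}{1-\delta_{\bm{a}_k}}\delta_{\bm{a}_k}\|\hat{K}\|$ of $\hat{K}$. Therefore
\[\|P\|\le\|\Pi^\sharp_{\bm{a}_k}(\hat{K})\|\le\Big(1+\frac{2-\delta_{\bm{a}_k}}{1-\delta_{\bm{a}_k}}\delta_{\bm{a}_k}\Big)\|\hat{K}\|.\]

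The obstacle is making the last bound uniform in $k$, since the factor blows up as $\delta_{\bm{a}_k}\to1$. To handle this I use that $(\bm{a}_k)_k$ is a Galerkin sequence, so $\delta_{\bm{a}_k}\to0$. Then $\sup_k\delta_{\bm{a}_k}=:\bar\delta$ is attained at finitely many indices, and since each $\delta_{\bm{a}_k}<1$, we get $\bar\delta<1$. Because $t\mapsto\frac{2-t}{1-t}t$ is increasing on $[0,1)$, the fourth family is bounded by $R:=\big(1+\frac{2-\bar\delta}{1-\bar\delta}\bar\delta\big)\|\hat{K}\|$. This $R$ also dominates the bound $\|\hat{K}\|$ from the other three families. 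If one prefers not to use the Galerkin property, the alternative is to take the maximum over the finitely many $k$ with $\delta_{\bm{a}_k}\ge1/2$ together with the bound $3\|\hat{K}\|$ for the remaining indices. However, that alternative also needs finiteness, which again comes from $\delta_{\bm{a}_k}\to0$. With $R$ fixed, the conclusion follows as described in the first paragraph.
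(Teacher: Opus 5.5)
Your proposal is correct and follows the paper's proof. In both, $\mc{M}$ and the first two families are bounded by $\|\hat{K}\|$ because $\hat{P}_k\subset\hat{K}$. The Galerkin property together with $\delta_{\bm{a}_k}\in(0,1)$ makes the factor from Theorem~\ref{oldapprox} uniformly bounded in $k$, and Lipschitz continuity of $\Psi$ on bounded subcollections finishes the argument. Your explicit derivation of $\sup_k\delta_{\bm{a}_k}<1$ and the monotonicity of $t\mapsto\frac{2-t}{1-t}t$ only spell out the step the paper states in one line.
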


\begin{proof}
Since $\mc{M}\subset\{K\in\mc{K}^d:K\subset\hat{K}\}$ and
$\hat{P}_k\subset\hat{K}$ for all $k$, both $\mc{M}$ and the collection
$\cup_{k\in\N}(\{P\in\mc{P}^\flat_{\bm{a}_k}:P\subset\hat{P}_k\}
\cup\{P\in\mc{P}^\flat_{\bm{a}_k}:P\subset\hat{K}\})$
are bounded.
Moreover, since $(\bm{a}_k)_k$ is a Galerkin sequence
and $\delta_{\bm{a}_k}\in(0,1)$ for all $k$, we have
\[\sup_{k\in\N}
\frac{2-\delta_{\bm{a}_k}}{1-\delta_{\bm{a}_k}}
\delta_{\bm{a}_k}<\infty.\]
Thus Theorem \ref{oldapprox} implies that the sequence
$(\Pi^\sharp_{\bm{a}_k}(\hat{K}))_k$ is uniformly bounded.
Consequently,
$\bigcup_{k\in\N}\{P\in\mc{P}^\flat_{\bm{a}_k}:P\subset\Pi^\sharp_{\bm{a}_k}(\hat{K})\}$
is bounded as well.
Since $\Psi$ is Lipschitz on bounded subcollections of $\mc{K}^d$,
the assertion ensues.
\end{proof}

Now we investigate the feasible sets of approximate problems.

\begin{lemma}\label{lem:def:Psi}
Assume that $\delta_{\bm{a}_k}\in(0,1)$ for all $k$.
Let $\hat{P}_k\in\Pi^\flat_{\bm{a}_k}(\hat{K})$ for all $k$,
and let $\ell>0$ be as in Lemma \ref{lem:Psi:Lipschitz}.
Let $\mathbbm{1}\in\R^m$ be the vector of ones, define
\begin{align*}
&\Psi^{\flat\flat}_k(P):=\Psi(P)-4\ell\delta_{\bm{a}_k}\|\hat{K}\|\mathbbm{1},\\
&\Psi^{\flat}_k(P):=\Psi(P)-2\ell\delta_{\bm{a}_k}\|\hat{K}\|\mathbbm{1},\\
&\Psi^{\flat\sharp}_k(P):=\Psi(P)-2\ell\delta_{\bm{a}_k}\|\hat{K}\|\mathbbm{1},
\end{align*}
and define
\begin{align*}
&\mc{M}^{\flat\flat}_k:=\{P\in\mc{P}^\flat_{\bm{a}_k}:\Psi^{\flat\flat}_k(P)\le0,\
P\subset\hat{P}_k\},\\
&\mc{M}^{\flat}_k:=\{P\in\mc{P}^\flat_{\bm{a}_k}:\Psi^{\flat}_k(P)\le0,\
P\subset\hat{K}\},\\
&\mc{M}^{\flat\sharp}_k:=\{P\in\mc{P}^\flat_{\bm{a}_k}:\Psi^{\flat\sharp}_k(P)\le0,\
P\subset\Pi^\sharp_{\bm{a}_k}(\hat{K})\}.
\end{align*}
Then $\mc{M}^{\flat\flat}_k\neq\emptyset$, $\mc{M}^{\flat}_k\neq\emptyset$ and $\mc{M}^{\flat\sharp}_k\neq\emptyset$ for all $k$,
$\mc{M}^{\flat\flat}_k$, $\mc{M}^{\flat}_k$ and $\mc{M}^{\flat\sharp}_k$
are sequentially compact w.r.t.\ $\dist_H$ for all $k$,
and we have
\[\lim_{k\to\infty}\mc{D}_H(\mc{M},\mc{M}^{\flat\flat}_k)
=\lim_{k\to\infty}\mc{D}_H(\mc{M},\mc{M}^\flat_k)
=\lim_{k\to\infty}\mc{D}_H(\mc{M},\mc{M}^{\flat\sharp}_k)=0.\]
\end{lemma}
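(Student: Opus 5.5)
We handle the three collections in parallel. Write $\mc{A}^{\flat\flat}_k:=\{P\in\mc{P}^\flat_{\bm{a}_k}:P\subset\hat{P}_k\}$, $\mc{A}^\flat_k:=\{P\in\mc{P}^\flat_{\bm{a}_k}:P\subset\hat{K}\}$ and $\mc{A}^{\flat\sharp}_k:=\{P\in\mc{P}^\flat_{\bm{a}_k}:P\subset\Pi^\sharp_{\bm{a}_k}(\hat{K})\}$. Let $\eta_k$ denote the corresponding bound from Lemma \ref{lem:sets:only}: $4\delta_{\bm{a}_k}\|\hat{K}\|$ for $\flat\flat$, and $2\delta_{\bm{a}_k}\|\hat{K}\|$ in the other two cases. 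This $\eta_k$ is exactly the shift built into $\Psi^{\cdot}_k$ (divided by $\ell$).

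\emph{Nonemptiness.} Take $K\in\mc{M}$. By \eqref{a1}, \eqref{b1} and \eqref{c1}, there is $P$ in the relevant $\mc{A}_k$ with $\dist_H(K,P)\le\eta_k$. This $P$ is only guaranteed to be within $\eta_k$, but the infimum in $\mc{D}$ is attained because the constructions in the proof of Lemma \ref{lem:sets:only} produce explicit $P$. Lemma \ref{lem:Psi:Lipschitz} then gives $\Psi(P)\le\Psi(K)+\ell\eta_k\mathbbm{1}\le\ell\eta_k\mathbbm{1}$, so $\Psi^{\cdot}_k(P)\le0$ and $P$ lies in the respective $\mc{M}_k$. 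The same argument shows $\mc{D}(\mc{M},\mc{M}_k)\le\eta_k\to0$.

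\emph{Compactness.} Each $\mc{M}_k$ is a subset of a bounded collection. It is closed because $\mc{P}^\flat_{\bm{a}_k}$ is closed by Theorem \ref{thm:P:closed}, the inclusion in a fixed compact set is preserved under Hausdorff limits (as in Lemma \ref{lem:m:compact}), and $\Psi$ is continuous. Blaschke's Theorem \ref{Blaschke} then gives sequential compactness.

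\emph{The reverse direction is the main obstacle.} We need $\mc{D}(\mc{M}_k,\mc{M})\to0$, but $P\in\mc{M}_k$ only satisfies $\Psi(P)\le\ell\eta_k\mathbbm{1}$, so it is only almost feasible. We argue by contradiction using compactness. Suppose there are $\eps>0$ and $P_k\in\mc{M}_k$ (along a subsequence) with $\inf_{K\in\mc{M}}\dist_H(P_k,K)\ge\eps$. By \eqref{a2}, \eqref{b2} and \eqref{c2}, there are $K_k\subset\hat{K}$ with $\dist_H(P_k,K_k)\to0$; in the $\flat\sharp$ case this uses \eqref{c2} and $\delta_{\bm{a}_k}\to0$. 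By Blaschke's Theorem \ref{Blaschke}, a subsequence $K_k\to K^*$, with $K^*\subset\hat{K}$ as in Lemma \ref{lem:m:compact}, and hence $P_k\to K^*$ as well. By continuity, $\Psi(K^*)=\lim\Psi(P_k)\le\lim\ell\eta_k\mathbbm{1}=0$, so $K^*\in\mc{M}$. This contradicts $\dist_H(P_k,K^*)\ge\eps$. Combining both directions gives the three limits.
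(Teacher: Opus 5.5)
Your proof is correct and follows the paper's argument. Nonemptiness and $\mc{D}(\mc{M},\mc{M}_k)\to0$ come from \eqref{a1}, \eqref{b1}, \eqref{c1} together with the Lipschitz bound of Lemma \ref{lem:Psi:Lipschitz}; compactness comes from closedness plus Blaschke's theorem; and $\mc{D}(\mc{M}_k,\mc{M})\to0$ is a Blaschke-based contradiction argument. The one deviation is minor: you pass through \eqref{a2}, \eqref{b2}, \eqref{c2} to obtain $K_k\subset\hat{K}$ close to $P_k$ before extracting a limit, whereas the paper applies Blaschke directly to $P_k\subset\hat{P}_k\subset\hat{K}$. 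Your route also covers the $\flat\sharp$ case explicitly (there $P_k\not\subset\hat{K}$ in general), which the paper only dismisses as ``similar'', and your remark that the constructions behind \eqref{a1}--\eqref{c1} yield an explicit $P$ fills a small point the paper glosses over.
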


\begin{proof}
By \eqref{a2}, \eqref{b2} and \eqref{c2}
the collections $\mc{M}^{\flat\flat}_k$, $\mc{M}^{\flat}_k$ and $\mc{M}^{\flat\sharp}_k$
are bounded.
Hence Blaschke's selection theorem yields that they are sequentially pre-compact
w.r.t.\ $\dist_H$.
They are also sequentially closed, because
$\Psi^{\flat\flat}_k$, $\Psi^{\flat}_k$ and $\Psi^{\flat\sharp}_k$ are continuous,
the collection $\mc{P}^\flat_{\bm{a}_k}$ is closed by Theorem \ref{thm:P:closed},
and $\hat{P}_k\in\mc{K}^d$, $\hat{K}\in\mc{K}^d$ and
$\Pi^\sharp_{\bm{a}_k}(\hat{K})\in\mc{K}^d$ for all $k$.

\medskip

Let $K\in\mc{M}$.
Then $K\subset\hat{K}$ and $\Psi(K)\le 0$.
By \eqref{a1} there exists $P\in\mc{P}^\flat_{\bm{a}_k}$ with $P\subset\hat{P}_k$
such that $\dist_H(K,P)\le4\delta_{\bm{a}_k}\|\hat{K}\|$.
For $j\in\{1,\ldots,m\}$ we estimate
\[\Psi_j(P)
\le|\Psi_j(P)-\Psi_j(K)|+\Psi_j(K)
\le\ell\dist_H(P,K)\le4\ell\delta_{\bm{a}_k}\|\hat{K}\|.\]
In particular, we have $\Psi_k^{\flat\flat}(P)\le0$ and hence $P\in\mc{M}^{\flat\flat}_k$
and $\mc{M}^{\flat\flat}_k\neq\emptyset$.
Since $K$ was arbitrary, this shows
$\mc{D}(\mc{M},\mc{M}^{\flat\flat}_k)\le4\delta_{\bm{a}_k}\|\hat{K}\|\to0$ as $k\to\infty$.

\medskip

Assume that $\lim_{k\to\infty}\mc{D}(\mc{M}^{\flat\flat}_k,\mc{M})=0$ is false.
Then there exist $\eps>0$, a subsequence $\N'\subset\N$ and sets $P_k\in\mc{M}^{\flat\flat}_k$ for $k\in\N'$ with
\begin{equation}\label{loc:2}
\mc{D}(P_k,\mc{M})\ge\eps.
\end{equation}
Since the $\mc{M}^{\flat\flat}_k$ are uniformly bounded, there exists $c>0$
with $\|P_k\|\le c$ for all $k\in\N'$.
By Blaschke's selection theorem there exist a subsequence $\N''\subset\N'$
and $K^*\in\mc{K}^d$ with
\begin{equation}\label{loc:1}
\lim_{\N''\ni k\to\infty}\dist_H(P_k,K^*)=0.
\end{equation}
Since $P_k\in\mc{M}^{\flat\flat}_k$ for all $k$,
we have $\Psi^{\flat\flat}_k(P_k)\le 0$
and hence $\Psi(P_k)\le 4\ell\delta_{\bm{a}_k}\|\hat{K}\|\mathbbm{1}$.
By continuity of $\Psi$ and since $\lim_{k\to\infty}\delta_{\bm{a}_k}=0$,
we obtain $\Psi(K^*)\le0$.
By definition of $\mc{M}^{\flat\flat}_k$ we have $P_k\subset\hat{P}_k\subset\hat{K}$,
and hence \eqref{loc:1} also implies $K^*\subset\hat{K}$.
We have shown that $K^*\in\mc{M}$, which together with \eqref{loc:1} contradicts \eqref{loc:2}.
Hence our initial assumption was false, and we have
$\lim_{k\to\infty}\mc{D}(\mc{M}^{\flat\flat}_k,\mc{M})=0$.

\medskip

The proofs of the remaining statements are similar.
\end{proof}

Now we gather the results from this section in a final statement.
We use the abbreviation \emph{lsc} for \emph{lower semicontinuous}.

\begin{theorem} \label{concreteconv1}
For all $k\in\N$, assume that $\delta_{\bm{a}_k}\in(0,1)$,
and let $\Psi^{\flat\flat}_k$, $\Psi^{\flat}_k$ and $\Psi^{\flat\sharp}_k$
be as in Lemma \ref{lem:def:Psi}.
\begin{itemize}
\item  [a)] If $\Phi^{\flat\flat}_k:\mc{K}^d\to\R$ is lsc
and $\lim_{k\to\infty}\sup_{K\in\mc{M}^{\flat\flat}_k}|\Phi(K)-\Phi^{\flat\flat}_k(K)|=0$,
then $\argmin_{K\in\mc{M}}\Phi(K)\neq\emptyset$ and
\[\lim_{k\to\infty}\mc{D}(\argmin_{K\in\mc{M}^{\flat\flat}_k}
\Phi^{\flat\flat}_k(K),\argmin_{K\in\mc{M}}\Phi(K))=0.\]
\item  [b)] If $\Phi^{\flat}_k:\mc{K}^d\to\R$ is lsc
and $\lim_{k\to\infty}\sup_{K\in\mc{M}^{\flat}_k}|\Phi(K)-\Phi^{\flat}_k(K)|=0$,
then $\argmin_{K\in\mc{M}}\Phi(K)\neq\emptyset$ and
\[\lim_{k\to\infty}\mc{D}(\argmin_{K\in\mc{M}^{\flat}_k}
\Phi^{\flat}_k(K),\argmin_{K\in\mc{M}}\Phi(K))=0.\]
\item  [c)] If $\Phi^{\flat\sharp}_k:\mc{K}^d\to\R$ is lsc
and $\lim_{k\to\infty}\sup_{K\in\mc{M}^{\flat\sharp}_k}|\Phi(K)-\Phi^{\flat\sharp}_k(K)|=0$,
then $\argmin_{K\in\mc{M}}\Phi(K)\neq\emptyset$ and
\[\lim_{k\to\infty}\mc{D}(\argmin_{K\in\mc{M}^{\flat\sharp}_k}
\Phi^{\flat\sharp}_k(K),\argmin_{K\in\mc{M}}\Phi(K))=0.\]
\end{itemize}
\end{theorem}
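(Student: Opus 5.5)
The plan is to deduce each of a), b), c) from Theorem \ref{minconvthm}, applied with $\mc{M}$ as defined before \eqref{aop} and with $\mc{M}_k$ equal to $\mc{M}^{\flat\flat}_k$, $\mc{M}^\flat_k$ or $\mc{M}^{\flat\sharp}_k$. For $\Phi_k$ we take the restriction of $\Phi^{\flat\flat}_k$, $\Phi^\flat_k$ or $\Phi^{\flat\sharp}_k$ to the respective $\mc{M}_k$. The three cases are identical in structure, so I would write out a) and note that b) and c) follow verbatim. Throughout, the hypothesis $\delta_{\bm{a}_k}\in(0,1)$ is the one required by Lemmas \ref{lem:Psi:Lipschitz} and \ref{lem:def:Psi}.

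First I check the hypotheses on the limit collection. $\mc{M}$ is nonempty by standing assumption and bounded, since every $K\in\mc{M}$ satisfies $K\subset\hat K$, so $\mc{M}\in\mc{B}$. Lemma \ref{lem:m:compact} gives sequential compactness, and since $(\mc{K}^d,\dist_H)$ is a metric space this is compactness. $\Phi$ is continuous by the standing assumption.

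Second I check the hypotheses on the approximating collections. Lemma \ref{lem:def:Psi} gives that each $\mc{M}_k$ is nonempty and sequentially compact, hence compact. Each $\mc{M}_k$ is bounded, being contained in $\{P:P\subset\hat P_k\}\subset\{P:P\subset\hat K\}$, resp.\ in $\{P:P\subset\Pi^\sharp_{\bm{a}_k}(\hat K)\}$ with the bound from Theorem \ref{oldapprox}; so $\mc{M}_k\in\mc{B}$. Lemma \ref{lem:def:Psi} also supplies \eqref{setconv}, and \eqref{uniapprox} is exactly the assumed uniform convergence of $\Phi_k$ to $\Phi$ on $\mc{M}_k$.

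The one hypothesis not handed to us directly is $\argmin_{K\in\mc{M}_k}\Phi_k(K)\neq\emptyset$, and I expect this to be the only real (though mild) obstacle. I would use the direct method. Take a minimizing sequence in $\mc{M}_k$ for $\Phi_k$. By sequential compactness of $\mc{M}_k$ it has a subsequence converging in $\dist_H$ to some $P^*\in\mc{M}_k$. Lower semicontinuity of $\Phi^{\flat\flat}_k$ then gives $\Phi_k(P^*)\le\inf_{\mc{M}_k}\Phi_k$. This infimum is $>-\infty$: lsc functions attain a minimum on compact sets, or equivalently the inequality just derived forces the infimum to equal the finite value $\Phi_k(P^*)$. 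Hence $P^*$ is a minimizer. Theorem \ref{minconvthm} then yields both $\argmin_{\mc{M}}\Phi\neq\emptyset$ and the claimed convergence of the argmin sets.
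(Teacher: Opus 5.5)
Your proposal is correct and follows the paper's proof: you verify the hypotheses of Theorem \ref{minconvthm} using Lemma \ref{lem:m:compact} for $\mc{M}$ and Lemma \ref{lem:def:Psi} for the approximating collections, and you get nonemptiness of the discrete minimizer sets from lower semicontinuity on compact collections. Your write-up is slightly more explicit than the paper's, spelling out membership in $\mc{B}$ and the direct-method argument, including why the infimum is finite.
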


\begin{proof}
By assumption and by Lemma \ref{lem:m:compact} the collection $\mc{M}$ is nonempty and compact.
By Lemma \ref{lem:def:Psi} the collections
$\mc{M}^{\flat\flat}_k$, $\mc{M}^{\flat}_k$ and $\mc{M}^{\flat\sharp}_k$
are nonempty and compact for all $k$.
Hence by lower semicontinuity, the sets of minimizers are nonempty in cases
a), b) and c).
In addition, Lemma \ref{lem:def:Psi} ensures that $\mc{M}^{\flat\flat}_k$,
$\mc{M}^{\flat}_k$ and $\mc{M}^{\flat\sharp}_k$ satisfy condition \eqref{setconv}.
Thus Theorem \ref{minconvthm} applies and yields the desired statement.
\end{proof}

\bibliographystyle{plain}
\bibliography{main}
\end{document}